\crefname{thm}{Theorem}{Theorems}
\Crefname{thm}{Theorem}{Theorems}
\pgfplotsset{compat=newest, compat/show suggested version=false}
\numberwithin{algorithm}{section}
\newtheorem{thm}{Theorem}[section]
\newtheorem{theorem}[thm]{Theorem}
\newtheorem{lemma}[thm]{Lemma}
\newtheorem{remark}[thm]{\textit{Remark}}
\newtheorem{definition}[thm]{Definition}
\newtheorem{example}[thm]{Example}
\newcommand{\titleShort}{Robust Policy Evaluation with L\'evy Dynamics}
\newcommand{\titleLong}{A Robust Model-Based Approach for Continuous-Time Policy Evaluation with Unknown L\'evy Process Dynamics}
\newcommand{\drift}{b}
\newcommand{\diffusionOrdinary}{D_{\textnormal{o}}}
\newcommand{\diffusionFractional}{D_{\textnormal{f}}}
\title[\titleShort]{\titleLong}
\thanks{}
\author{Qihao~Ye~\orcidlink{0000-0002-7369-757X}}
\address{Department of Mathematics, University of California, San Diego, CA 92093, United States} 
\email{q8ye@ucsd.edu}
\author{Xiaochuan~Tian~\orcidlink{0000-0002-4539-6702}}
\address{Department of Mathematics, University of California, San Diego, CA 92093, United States} 
\email{xctian@ucsd.edu}
\author{Yuhua~Zhu~\orcidlink{0009-0000-7197-218X}}
\address{Department of Statistics and Data Science, University of California, Los Angeles, CA 90095, United States}
\email{yuhuazhu@ucla.edu}
\date{}
\numberwithin{equation}{section}
\begin{document}

\begin{abstract}
    This paper develops a model-based framework for continuous-time policy evaluation (CTPE) in reinforcement learning, incorporating both Brownian and L\'evy noise to model stochastic dynamics influenced by rare and extreme events.
Our approach formulates the policy evaluation problem as solving a partial integro-differential equation (PIDE) for the value function with unknown coefficients.
A key challenge in this setting is accurately recovering the unknown coefficients in the stochastic dynamics, particularly when driven by L\'evy processes with heavy tail effects.
To address this, we propose a robust numerical approach that effectively handles both unbiased and censored trajectory datasets.
This method combines maximum likelihood estimation with an iterative tail correction mechanism, improving the stability and accuracy of coefficient recovery.
Additionally, we establish a theoretical bound for the policy evaluation error based on coefficient recovery error.
Through numerical experiments, including a real-data BTC price experiment, we demonstrate the effectiveness and robustness of our method in recovering heavy-tailed L\'evy dynamics and verify the theoretical error analysis in policy evaluation.

\end{abstract}

\subjclass[2020]{65R20, 62M05, 35R09, 60H35, 93E35, 90C40, 68T05}

\keywords
{continuous-time policy evaluation,
L\'evy process,
fractional Fokker-Planck equation,
model-based method,
reinforcement learning,
importance sampling,
heavy-tail data,
tail correction mechanism,
data-driven modeling}

\maketitle

\onehalfspacing


\section{Introduction}\label{sec:introduction}
Reinforcement learning (RL) has achieved remarkable success in artificial intelligence, with applications such as AlphaGo \cite{Silver2016}, strategic gameplay \cite{Mnih2015}, and fine-tuning large language models \cite{Ziegler2019}.
However, these successes are primarily in discrete-time sequential decision-making settings, where the state changes only after an action is taken.
In contrast, in most real-world decision-making problems, the state evolves continuously in time, regardless of whether actions are taken in continuous or discrete time.
Examples include dynamic treatment regimes in healthcare \cite{hua2022personalized,murphy2003optimal}, robotics \cite{karimi2023dynamic,Kober2013,siciliano1999robot}, autonomous driving \cite{sciarretta2004optimal}, and financial markets \cite{merton1975optimum,Moody2001}.

One common approach to handling continuous-time reinforcement learning (CTRL) is to discretize time and reformulate the problem as a discrete-time Markov decision process (MDP) \cite{doya2000reinforcement}.
This transformation allows standard RL algorithms to be applied directly within the classical RL framework.
However, as demonstrated in \cite{zhu2024phibe}, for policy evaluation, the discretization error can be significant when the reward function exhibits large oscillations.
Moreover, in RL, reward functions often need to oscillate significantly to effectively distinguish between rewards and penalties, which is essential for learning an optimal policy.
This requirement suggests that MDP may not always be an ideal framework for solving CTRL problems, a limitation that has also been observed empirically \cite{naik2024reward,sowerby2022designing}.
Fundamentally, the MDP framework is designed for discrete-time decision-making.
In other words, even when the continuous-time structure of a problem is known, MDPs lack a natural mechanism to fully leverage this information.
A complementary line of work treats RL directly through continuous-time stochastic control, including exploratory control formulations, martingale policy evaluation, policy-gradient and actor-critic methods, and continuous-time $q$-learning \cite{wang2020reinforcement,jia2022policya,jia2022policyb,jia2023q}.

This paper focuses on the continuous-time policy evaluation (CTPE) problem, which is one of the most fundamental problems in RL.
The continuous-time dynamics under a given policy are governed by the following stochastic differential equation (SDE):
\begin{equation}\label{eq:underlying_dynamics}
    d X_{t}
    = \drift(X_{t}) \, d t
    + \Sigma(X_{t}) \, d W_{t}
    + \sigma(X_{t}) \, d L_{t}^{\alpha},
\end{equation}
where $W_{t}$ is a standard Wiener process, and $L_{t}^\alpha$ is a symmetric $2 \alpha$-stable L\'evy process with $\alpha \in (0, 1)$.
In general, the original continuous-time dynamics depends on the action.
However, in the policy evaluation problem, we substitute the policy directly, resulting in continuous-time dynamics that depend only on the state. We assume that the drift term ${\drift}(x)$ and diffusion terms $\Sigma(x), \sigma(x)$ are unknown and independent of time.
Unlike traditional continuous-time models that often assume $\sigma \equiv 0$, our model incorporates both Brownian motion and non-Gaussian, heavy-tailed L\'evy processes.
Many real-world stochastic processes, such as financial returns, network traffic in communication systems, anomalous diffusion in physics, are better described by L\'evy processes rather than purely Gaussian models \cite{clauset2009power,huang2023model,humphries2010environmental,MeKl00,samorodnitsky1996stable,tankov2003financial,zaslavsky2002chaos}.
This generalized framework enhances versatility in capturing complex stochastic behavior in real-world systems.
The goal of CTPE is to estimate the value function
\begin{equation}\label{eq:definition_of_the_value_function} 
    V(x)
    = \mathbb{E} \left [ \left . \int_{0}^{\infty} e^{-\beta t} r(X_{t}) dt \right | X_{0} = x \right ]
\end{equation}
using only trajectory data generated by the underlying dynamics \eqref{eq:underlying_dynamics}.
Here, $\beta$ is a given discounted coefficient, and $r$ is a given reward function.
The key difference between CTPE and policy evaluation in the MDP framework is that CTPE aims to estimate an integral over continuous time, whereas in discrete-time decision-making problems, the objective is to estimate a cumulative sum over discrete time.

One effective approach to leverage the underlying continuous-time structure is to interpret the value function in \eqref{eq:definition_of_the_value_function} as the solution to the following partial integro-differential equation (PIDE), which is derived in \Cref{lemma:policy_evaluation}:
\begin{equation*}
    \beta V(x)
    = r(x)
    + \drift(x) \cdot \nabla V (x)
    + \diffusionOrdinary(x) : \nabla^{2} V (x)
    - \diffusionFractional(x) (- \Delta)^{\alpha} V(x).
\end{equation*}
Here, the coefficients $\drift(x)$, $\diffusionOrdinary(x)$, and $\diffusionFractional(x)$ related to the underlying dynamics are unknown and must be inferred from trajectory data.
Compared to the MDP framework, addressing the CTRL problem within a model-based PDE framework offers several advantages.
Furthermore, while the MDP framework inherently suffers from an $\mathcal{O}(\Delta t)$ discretization error due to time discretization, the error in the PDE framework depends instead on the accuracy of the estimated coefficients.
In certain cases, this allows one to mitigate the $\mathcal{O}(\Delta t)$ error from discretized observations and achieve a more accurate estimate of the value function \cite{gobet2004nonparametric}.
Another key advantage of the PDE formulation is its interpretability.
When trajectory data are abundant and prior knowledge of the underlying dynamics is available, the model-based approach can effectively incorporate this information to refine the estimated dynamics, leading to more robust and reliable decision-making \cite{amos2021model,janner2019trust}.
Finally, this model-based policy evaluation framework can be easily extended to continuous-time control settings.
By integrating it with existing optimal control solvers, it provides a natural approach for addressing the CTRL problem.
These advantages make the PDE-based approach a compelling alternative to the traditional MDP-based framework, particularly in scenarios where the continuous-time structure plays a crucial role in decision-making and learning.

Our approach to policy evaluation consists of two main steps: first, recovering the coefficient functions of the stochastic dynamics from trajectory data, and second, solving the PIDE based on the estimated model.
To estimate the coefficients, we use maximum likelihood estimation combined with an efficient fractional Fokker--Planck equation solver.
For the second step, we establish a theoretical bound for the policy evaluation error in \Cref{thm:error_bound}, which quantifies the impact of approximation errors in the recovered coefficients.
As our results suggest, the accuracy of policy evaluation is directly dependent on how well the coefficient functions are estimated.
A key challenge in this process is learning stochastic dynamics driven by L\'evy noise.
This problem has been considered in several previous works \cite{guo2024weak,li2021data,li2022extracting,yang2023neural}.
However, existing approaches are either limited to relatively large values of $\alpha\in (1 / 2, 1)$ or assume a constant coefficient for the L\'evy noise term.
In this work, we aim to provide a comprehensive study of the performance of our approach in recovering the stochastic dynamics \eqref{eq:underlying_dynamics} with variable coefficients by examining the impact of different datasets and $\alpha$ values on the numerical approach. 
The difficulties are twofold, both stemming from the presence of a $2 \alpha$-stable L\'evy process.
First, coefficient recovery is often unstable when dealing with strongly heavy-tailed data, particularly when $\alpha$ is small, as illustrated in the left panel of \Cref{fig:comparison}.
This instability arises due to large jumps in the data, making coefficient estimation highly sensitive.
Second, tail data may be missing due to measurement limitations in real-world datasets or computational constraints imposed by sampling algorithms such as Markov Chain Monte Carlo (MCMC).
In such cases, while coefficient recovery becomes more stable, it is also less accurate, as shown in the right panel of \Cref{fig:comparison}.
To address these issues, we propose a novel approach specifically designed for data with insufficient tail information (or unbiased datasets where tails are intentionally removed).
Our method incorporates an iterative tail correction mechanism that mitigates instability while improving recovery accuracy.
Numerical results, presented in the middle panel of \Cref{fig:comparison}, demonstrate the effectiveness and robustness of our approach.

\begin{figure}[htp]
    \centering
    \includegraphics[width=\linewidth]{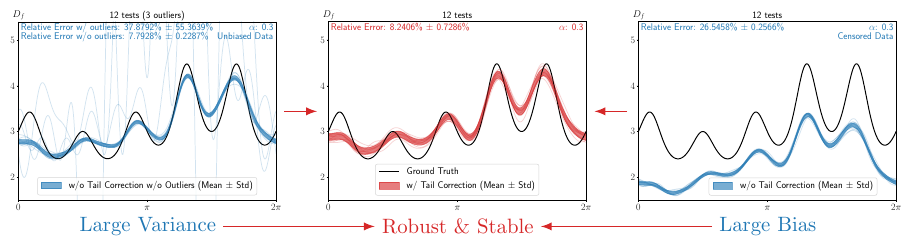}
    \caption
    {
        Comparison of different approaches in estimating $\diffusionFractional(x)$ across 12 independent tests for each case.
        Left panel: results using unbiased trajectory data, showing instability in recovery with a large number of outliers. 
        Right panel: results using censored trajectory data with filtered tails, leading to more stable recovery but introducing significant bias.
        Middle panel: results incorporating the tail correction technique, which improves the accuracy and robustness of coefficient recovery.
        Further details on the numerical tests can be found in \Cref{sec:numerical_experiments}.
    }
    \label{fig:comparison}
\end{figure}

The key contributions of this paper are as follows.
\begin{enumerate}
    \item \textit{The L\'evy process-based model.}
    While many previous RL models assume stochastic dynamics with only Brownian noise, we investigate RL problems in which the underlying stochastic dynamics are influenced by both Brownian noise and L\'evy noise.
    This framework is well-suited for modeling real-world scenarios where rare and extreme events occur in markets or systems.
    \item \textit{Accurate and robust recovery of L\'evy dynamics.}
    The proposed numerical approach effectively addresses the challenges of recovering L\'evy dynamics from strongly heavy-tailed data.
    This method is especially crucial when the coefficient $\alpha \in (0, 1)$ is small, corresponding to distributions with more severe tails.
    \item \textit{A theoretical bound for policy evaluation error.}
    Our PIDE-based approach to policy evaluation comes with a theoretical guarantee of accuracy.
    The policy evaluation error is ultimately determined by the recovery error of the stochastic dynamics and the numerical error introduced in solving the PIDE.
\end{enumerate}

\noindent {\bf Related work.}
There are two primary types of inverse problems in reinforcement learning.
The first concerns inferring the reward function given known dynamics, as studied in works such as \cite{ding2022mean,levine2012continuous,ren2024policy}.
The second, which is the focus of this paper, involves recovering unknown dynamics when the reward function is known.
Our work is also connected to the recent continuous-time RL literature based on stochastic control and exploratory regularization \cite{wang2020reinforcement,Guo22,tang2022exploratory,han2023choquet}.
A growing body of literature has addressed the identification of continuous stochastic dynamics from observational data, primarily focusing on systems driven by standard Brownian motion, without incorporating jump or L\'evy processes.
When trajectory data are available, relevant approaches include learning the coefficients of SDEs \cite{pmlr-v235-chen24n,darcy2023one,dietrich2023learning,gu2023stationary,zhu2024dyngma,zhu2023learning}, as well as flow-map-based models that directly learn the underlying dynamics \cite{chen2024learning,liu2024training,xu2024modeling}.
Other works utilize aggregate data to infer the dynamics of the system \cite{chen2021solving,ma2021learning,yang2022generative}. 
In contrast, relatively few studies have addressed the recovery of continuous-time dynamics that incorporate jump processes.
Studies on identifying dynamics driven by counting processes or compound Poisson processes include \cite{gao2022data,jia2019neural,xia2024efficient}.
The continuous-time RL framework has recently been extended from diffusions to jump-diffusions \cite{gao2024reinforcement}, while related work on identifying $2 \alpha$-stable L\'evy processes appears in \cite{guo2024weak, li2021data, li2022extracting, yang2023neural}.
These studies typically consider either large fixed values of $\alpha$ (e.g., $\alpha \in (1/2, 1)$), corresponding to less severe heavy tails, or assume a constant fractional diffusion coefficient $\sigma(x) \equiv \textnormal{const}$.
In this paper, we propose a new algorithm that extends to \textit{state-dependent} diffusion coefficients $\sigma(x)$ and is applicable across a wide range of $\alpha$.
Moreover, the method remains robust under censored trajectory data, addressing key limitations of the existing literature.

The remainder of the paper is organized as follows. \Cref{sec:framework} presents the notation and problem setup, introducing the CTPE problem along with the necessary SDE and PDE tools.
In \Cref{sec:numerical_methods}, we describe our numerical approaches for recovering stochastic dynamics and evaluating policies.
Specifically, we introduce a novel tail correction technique designed to enhance the accuracy and robustness of L\'evy process recovery.
A theoretical error bound is also given for policy evaluation.
Numerical results, including a real-data BTC experiment, are provided in \Cref{sec:numerical_experiments}, followed by conclusions in \Cref{sec:conclusion}.


\section{Problem Setting}\label{sec:framework}
Consider the following continuous-time policy evaluation problem, where the value function $V^{\pi}(x) \in \mathbb{R}$ is the expected discounted cumulative reward starting from ${x}$ under policy $\pi$:
\begin{equation*}
    V^{\pi}(x)
    = \mathbb{E} \left [ \left . \int_{0}^{\infty} e^{- \beta t} r(X_{t}, a_{t}) \, d t \right | X_{0} = x, a_{t} \sim \pi \right ].
\end{equation*}
Here $\beta > 0$ is a discount coefficient, $r(x, a) \in \mathbb{R}$ is a reward function, and the state $X_{t} \in \mathbb{S} = \mathbb{R}^{d}$ at time $t$ is a Markov stochastic process.
The state variable $X_{t}$ satisfies the SDE 
\begin{equation*}
    d X_{t}
    = \drift(X_{t}, a_{t}) \, d t
    + \Sigma(X_{t}, a_{t}) \, d W_{t}
    + \sigma(X_{t}, a_{t}) \, d L_{t}^{\alpha}
\end{equation*}
where $\drift(x, a) \in \mathbb{R}^{d}$, $\Sigma(x, a) \in \mathbb{R}^{d \times M}$, and $\sigma(x, a) \in \mathbb{R}$.
$W_{t}$ is an $M$-dimensional standard Wiener process, and ${L}_{t}^{\alpha}$ is a symmetric $2 \alpha$-stable L\'evy process with $\alpha \in (0, 1)$.
For a given policy $\pi$, the problem simplifies to \Cref{eq:definition_of_the_value_function} following the dynamics described in \Cref{eq:underlying_dynamics}.
The refinement of the policy update mechanism based on this policy evaluation problem is left for future research.

Define $\diffusionOrdinary(x) := \frac{1}{2} \Sigma(x) \Sigma^{T}(x)$ and $\diffusionFractional(x) := |\sigma(x)|^{2 \alpha}$.
In this work, we assume $\diffusionOrdinary(x)$ is uniformly positive definite and $\diffusionFractional(x) \geq \lambda_{\textnormal{f}} > 0$.
The probability density function $p(x, t)$ of $X_{t}$ starting from $x_{0} \in \mathbb{R}^{d}$ at initial time $t = 0$ is described by the following fractional Fokker--Planck equation (FFPE) \cite{gao2016fokker,nolan2020univariate,schertzer2001fractional}:
\begin{equation}\label{eq:FFPE_condition_on_x_0}
    \left \{
    \begin{aligned}
        &
        \begin{aligned}
            \frac{\partial}{\partial t} p(x, t )
            = &- \nabla \cdot \left [ \drift(x) p(x, t) \right ]
            + \nabla^{2}:\left [ \diffusionOrdinary(x) p(x, t ) \right ] 
            - (- \Delta)^{\alpha} \left [ \diffusionFractional(x) p(x, t) \right ]
        \end{aligned}
        \\
        &p(x, 0)
        = \delta_{x_{0}}(x)
    \end{aligned}
    \right.,
\end{equation}
where $\nabla^{2}$ is the Hessian operator and the double-dot ($:$) represents a tensor contraction by summing over pairs of matching indices.

Furthermore, the value function $V({x})$ defined above solves the second-order partial integro-differential equation in \Cref{lemma:policy_evaluation}.
\begin{lemma}\label{lemma:policy_evaluation}
Given the probability density function of the stochastic process $X_t$ governed by the fractional Fokker--Planck equation \eqref{eq:FFPE_condition_on_x_0}, the corresponding value function defined in \eqref{eq:definition_of_the_value_function} satisfies
    \begin{equation}
    \label{eq:PDE_value_function}
        \beta V(x)
        = r(x)
        + \drift(x) \cdot \nabla V(x)
        + \diffusionOrdinary(x) : \nabla^{2} V(x)
        - \diffusionFractional(x) (- \Delta)^{\alpha} V(x).
    \end{equation}
\end{lemma}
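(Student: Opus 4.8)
The plan is to combine the probabilistic representation of the value function with the duality between the forward (Fokker--Planck) equation \eqref{eq:FFPE_condition_on_x_0} and the backward (Kolmogorov) equation associated with the dynamics \eqref{eq:underlying_dynamics}. Define
\begin{equation*}
    u(x, t) := \mathbb{E}[\, r(X_{t}) \mid X_{0} = x \,] = \int_{\mathbb{R}^{d}} r(y)\, p(y, t)\, d y,
\end{equation*}
where $p(\cdot, t)$ is the solution of \eqref{eq:FFPE_condition_on_x_0} with $x_{0} = x$. Since $r$ is bounded, Fubini's theorem applied to \eqref{eq:definition_of_the_value_function} gives $V(x) = \int_{0}^{\infty} e^{-\beta t} u(x, t)\, d t$, so it suffices to (i) identify the evolution equation satisfied by $u$ and (ii) transfer it to $V$ by integrating against $e^{-\beta t}\, d t$.

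For step (i), let $\mathcal{L}$ denote the infinitesimal generator of \eqref{eq:underlying_dynamics}, namely $\mathcal{L}\phi := \drift \cdot \nabla \phi + \diffusionOrdinary : \nabla^{2}\phi - \diffusionFractional (-\Delta)^{\alpha}\phi$; here the coefficient $\diffusionFractional = |\sigma|^{2\alpha}$ in front of the fractional Laplacian is exactly the one produced by the self-similarity of the symmetric $2\alpha$-stable process, so that the term $\sigma(x)\, d L^{\alpha}_{t}$ contributes $-\diffusionFractional(x)(-\Delta)^{\alpha}$ to $\mathcal{L}$. The operator on the right-hand side of the FFPE \eqref{eq:FFPE_condition_on_x_0} is the formal $L^{2}$-adjoint $\mathcal{L}^{*}$ of $\mathcal{L}$: integrating by parts moves the divergence and Hessian off $p$, the fractional Laplacian is self-adjoint, and the multiplication by $\diffusionFractional$ switches sides, so that
\begin{equation*}
    \int_{\mathbb{R}^{d}} (\mathcal{L}^{*} p)(y)\, \phi(y)\, d y = \int_{\mathbb{R}^{d}} p(y)\, (\mathcal{L} \phi)(y)\, d y
\end{equation*}
for suitable $\phi$. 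By the Markov property, $u(x, t + h) = \mathbb{E}[u(X_{h}, t) \mid X_{0} = x]$, and differentiating at $h = 0$ shows that $u$ solves the backward equation $\partial_{t} u = \mathcal{L} u$ on $(0, \infty)$ with $u(x, 0) = r(x)$, the latter coming from $p(\cdot, 0) = \delta_{x}$; the adjoint identity above is the structural reason that the differential operator in this backward equation is the transpose of the one in the FFPE, hence the one appearing in \eqref{eq:PDE_value_function}.

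For step (ii), apply $\beta - \mathcal{L}$ to $V$, interchange the operator with the $t$-integral, use $\mathcal{L} u = \partial_{t} u$, and integrate by parts in $t$:
\begin{equation*}
    (\beta - \mathcal{L}) V(x) = \int_{0}^{\infty} e^{-\beta t}\big(\beta u(x, t) - \partial_{t} u(x, t)\big)\, d t = -\int_{0}^{\infty} \frac{d}{d t}\left(e^{-\beta t} u(x, t)\right) d t = u(x, 0) = r(x),
\end{equation*}
where we used the decay $e^{-\beta t} u(x, t) \to 0$ as $t \to \infty$ (immediate since $\beta > 0$ and $r$ is bounded). Rearranging gives exactly \eqref{eq:PDE_value_function}.

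The real work lies not in this formal chain but in licensing its steps: one needs enough regularity and spatial decay of $p(\cdot, t)$ so that the integrations by parts in the variable-coefficient second-order and nonlocal terms carry no boundary contributions and $(-\Delta)^{\alpha}$ may legitimately be moved onto $\phi$, enough control to differentiate under the integral signs in $t$ and $x$, and enough regularity of $V$ for the right-hand side of \eqref{eq:PDE_value_function} to be classically meaningful. Under the standing assumptions (smooth bounded coefficients, $\diffusionOrdinary$ uniformly positive definite, $\diffusionFractional \geq \gamma > 0$, and $r$ bounded and sufficiently smooth) these hold and $V$ is a classical solution, so the argument goes through. An alternative route bypassing the FFPE is to apply the It\^o--Dynkin formula for jump diffusions to $e^{-\beta t} V(X_{t})$, take expectations (the Brownian and compensated-jump integrals being true martingales under these assumptions), combine with the tower-property identity $V(x) = \mathbb{E}[\int_{0}^{T} e^{-\beta s} r(X_{s})\, d s \mid X_{0} = x] + \mathbb{E}[e^{-\beta T} V(X_{T}) \mid X_{0} = x]$, and let $T \downarrow 0$. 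I expect the duality argument to be the quicker one to write, with the bookkeeping of hypotheses that justify the integrations by parts and the limit exchanges being the main obstacle.
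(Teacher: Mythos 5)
Your proposal is correct and is essentially the paper's own argument: the paper simply observes that $\drift\cdot\nabla + \diffusionOrdinary:\nabla^{2} - \diffusionFractional(-\Delta)^{\alpha}$ is the infinitesimal generator of the Markov process \eqref{eq:underlying_dynamics} and cites Theorem 9.1, Chapter III of Fleming--Soner for the fact that the discounted expected reward solves the resolvent equation $\beta V - \mathcal{L}V = r$, and your duality/backward-equation derivation (or the alternative Dynkin-formula route you sketch) is precisely an in-line proof of that cited result. The only difference is in the regularity bookkeeping: you assume enough smoothness to run the classical chain, while the paper interprets \eqref{eq:PDE_value_function} in the viscosity sense and upgrades to classical solutions via the estimates of \Cref{lem:regularity}.
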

\Cref{eq:PDE_value_function} can be interpreted in the {\it viscosity sense}, with the precise definition of viscosity solutions given in \Cref{defn:viscositysolu}.
By the regularity result proved in \Cref{lem:regularity}, viscosity solutions coincide with classical solutions with additional appropriate assumptions.
Finally, the proof of the above lemma follows directly from Theorem 9.1, Chapter III in \cite{fleming2006controlled} by observing that the operator $\drift \cdot \nabla + \diffusionOrdinary : \nabla^{2} - \diffusionFractional (- \Delta)^{\alpha}$ is the infinitesimal generator of the Markov process described by \Cref{eq:underlying_dynamics}.
We note that although the theorem in \cite{fleming2006controlled} is stated for classical solutions, we know that it is also true for viscosity solutions given the regularity estimates in \Cref{lem:regularity}.

The objective of this paper is to solve \eqref{eq:PDE_value_function} with unknown coefficient functions $\drift(x)$, $\diffusionOrdinary(x)$, and $\diffusionFractional(x)$.
This involves two steps: first, recovering the coefficient functions from the observed data, and second, solving \eqref{eq:PDE_value_function} using the recovered approximate coefficients.
In practice, we only have access to observed trajectory data, denoted by $\{ x_{j \Delta t}^{(i)} \}_{i = 1, j = 0}^{i = I, j = J}$, where $i$ indexes different trajectories and $j$ represents discrete time steps.
The trajectory data start from the initial states $x_{0}^{(i)}$ for $i = 1, \ldots, I$, and the subsequent states $x_{(j + 1) \Delta t}^{(i)}$ ($j = 0, \ldots, J - 1$) evolve according to the probability density function $p(x, \Delta t \mid x_{j \Delta t}^{(i)})$, governed by the first equation in \eqref{eq:FFPE_condition_on_x_0} with initial condition at $x_{j \Delta t}^{(i)}$.
In our numerical experiments, we consider the following two different types of trajectory data.
\begin{enumerate}
    \item
    \textbf{Unbiased trajectory data}:
    We assume that the trajectory data are unbiased samples from the stochastic dynamics \eqref{eq:underlying_dynamics}.
    In practice, to generate such data, we employ the Euler-Maruyama approximation of \eqref{eq:underlying_dynamics} with a relatively small time step ($\sim \Delta t / 10$).
    At each time step, we apply the sampling algorithms from \cite{nolan2020univariate}.
    \item
    \textbf{Censored trajectory data}:
    In contrast to the unbiased data, the censored trajectory data systematically omit certain parts of the dynamics, particularly the tails or large jumps.
    This missing information may arise due to observational constraints, filtering mechanisms, or inherent biases in data collection.
    As a result, the censored data set underrepresents rare but significant transitions.
    To generate such data in practice, we employ two approaches:
    (1) using the same Euler-Maruyama approximation with small time steps and filtering out a percentage of large trajectory segments exceeding a predefined threshold, and
    (2) generating data with an MCMC sampler constrained to a limited range.
\end{enumerate}


\section{Numerical Methods}\label{sec:numerical_methods}
The policy evaluation process based on the given trajectory data has two main stages.
In the first stage, the coefficients $\drift(x), \diffusionOrdinary(x), \diffusionFractional(x)$ are recovered using maximum likelihood estimation.
In this step, the probability density functions are computed using a fast numerical solver for the FFPE \eqref{eq:FFPE_condition_on_x_0}.
A tail correction technique is introduced to improve the accuracy and robustness of coefficient recovery, especially for severely heavy-tailed data.
See \Cref{sec:coefficient_recovery_method,sec:TC_technique} for details.
In the second stage, the value function is solved from \Cref{eq:PDE_value_function} using the recovered coefficients.
A theoretical error bound is provided in \Cref{sec:policy_evaluation_method} given the coefficient recovery error.
Comprehensive numerical experiments for coefficient recovery and policy evaluation are presented in \Cref{sec:underlying_dynamics_recovery_experiment} and \Cref{sec:policy_evaluation_experiment}, respectively.

To better illustrate our approach, we focus on one-dimensional setups in this section and in the numerical examples of the next section.
It is important to note that in the coefficient recovery step, numerical solvers are required to compute the probability density functions of the underlying stochastic dynamics. 
In \Cref{appendix:numerical_method}, we present a tailored adaptation of the fast solver from \cite{ye2026fast} for efficiently solving the following FFPE with constant coefficients:
\begin{equation}\label{eq:FFPE_constant}
    \left \{
    \begin{aligned}
        \frac{\partial}{\partial t} p(x, t)
        &= - \frac{\partial}{\partial x} \left [ \drift(x_{0}) p(x, t) \right ] + \frac{\partial^{2}}{\partial x^{2}} \left [ \diffusionOrdinary(x_{0}) p(x, t) \right ]
        - (- \Delta)^{\alpha} \left [ \diffusionFractional(x_{0}) p(x, t) \right ]\\
        p(x, 0)
        &= \delta_{x_{0}}(x)
    \end{aligned}
    \right ..
\end{equation}
Note that \Cref{eq:FFPE_constant} provides a short-time approximation of \Cref{eq:FFPE_condition_on_x_0}. The accuracy of this approximation improves as the time frame $\Delta t$ decreases.
We acknowledge that the approximation error to the FFPE with variable coefficients is a major source of error in recovering the stochastic dynamics.
Therefore, developing a solver for the FFPE with variable coefficients remains a key objective for future research.

\subsection{Coefficient Recovery}\label{sec:coefficient_recovery_method}
In this subsection, we present numerical methods for estimating the unknown coefficients from the discrete-time trajectory data, as described in \Cref{sec:framework}.
In the main synthetic experiments, we assume that the fractional exponent $\alpha \in (0, 1)$ is known.
The real-data experiment in \Cref{sec:real_data_experiment} includes a constant-$\alpha$ fit; a systematic treatment of unknown or state-dependent $\alpha$ is left for future work.
We approximate the coefficient functions from a finite-dimensional space spanned by Fourier basis:
\begin{equation*}
    \mathcal{S}_{N}
    = \text{span} \{ 1, \cos(k x), \sin(k x) : k = 1, \ldots, N \}.
\end{equation*}
Let $K = 2 N + 1$, and let $\{ \phi_{k} \}_{k = 1}^{K}$ denote the basis functions in $\mathcal{S}_{N}$.
For clarity of presentation, we assume that the coefficient functions belong to $\mathcal{S}_{N}$ and are expanded as
\begin{equation}\label{eq:basis_function_expansion}
    \drift(x; \theta)
    = \sum_{k = 1}^{K} \theta_{1, k} \phi_{k}(x), \quad
    \diffusionOrdinary(x; \theta)
    = \sum_{k = 1}^{K} \theta_{2, k} \phi_{k}(x), \quad
    \diffusionFractional(x; \theta)
    = \sum_{k = 1}^{K} \theta_{3, k} \phi_{k}(x).
\end{equation}
We note that using different basis functions to represent the coefficient functions is also viable.
Alternative representations, such as neural network-based approaches, are also possible and are deferred to future discussion. To facilitate our discussion, we define 
$\Theta(x; \theta) := \left ( \drift(x; \theta), \diffusionOrdinary(x; \theta), \diffusionFractional(x; \theta) \right )$ and $\theta = \{ \theta_{l, k} \}_{l = 1, k = 1}^{l = 3, k = K}$.

We aim to maximize the log-likelihood function given by
\begin{equation}\label{eq:log_likelihood_function_approximation}
    \ell(\theta)
    \approx \sum_{i = 1}^{I} \sum_{j = 0}^{J - 1} \ln p \left ( x_{(j + 1) \Delta t}^{(i)}, \Delta t; x_{j \Delta t}^{(i)}, \alpha, \Theta \left ( x_{j \Delta t}^{(i)}; \theta \right ) \right ),
\end{equation}
where $p \left ( x, \Delta t; x_{j \Delta t}^{(i)}, \alpha, \Theta \left ( x_{j \Delta t}^{(i)}; \theta \right ) \right )$ denotes the solution to the FFPE with initial condition $\rho_{0}(x) = \delta \left ( x - x_{j \Delta t}^{(i)} \right )$, fractional exponent $\alpha$ and coefficients $\Theta \left ( x_{j \Delta t}^{(i)}; \theta \right )$.
Maximizing this function through gradient-based methods facilitates the estimation of $\theta$.

The gradient
\begin{equation}
    \nabla_{\theta} \ell =
    \left( 
        \frac{\partial \ell}{\partial \theta_{1, 1}}, \ldots, \frac{\partial \ell}{\partial \theta_{1, K}},
        \frac{\partial \ell}{\partial \theta_{2, 1}}, \ldots, \frac{\partial \ell}{\partial \theta_{2, K}},
        \frac{\partial \ell}{\partial \theta_{3, 1}}, \ldots, \frac{\partial \ell}{\partial \theta_{3, K}}
    \right)^{T}
\end{equation}  
can be computed either through direct computation or the finite difference approximation, with the latter being more general but less stable and computationally more expensive.
A detailed discussion on the implementation of the maximum likelihood estimation is provided in \Cref{appendix:maximum_likelihood_estimation}.
In particular, \Cref{alg:FFPE_direct_gradient} provides an algorithm for direct gradient computation and \Cref{alg:FFPE_finite_difference_gradient} outlines the finite difference approximation.

\subsection{The Tail Correction Technique}\label{sec:TC_technique}
The tail correction technique presented in this subsection specifically addresses the {\it censored trajectory data} described in \Cref{sec:framework}.
This serves two key purposes.
First, when using censored data, the recovered fractional diffusion coefficient consistently underestimates the true value, especially for small $\alpha$, as illustrated in \Cref{fig:comparison}.
The proposed tail correction technique corrects this bias.
Second, when working with unbiased data, the presence of heavy-tailed trajectories leads to instability in coefficient recovery, as also shown in \Cref{fig:comparison}.
Applying the tail correction technique to unbiased data with filtered tails enhances robustness in recovery.
Further details on numerical experiments are provided in \Cref{sec:numerical_experiments}.

More precisely, we use a cutting threshold $\texttt{CT} > 0$ to construct a tail sub-data pool $P_{\textnormal{tail}}$ from the full observation data pool $P_{\textnormal{main}}$:
\begin{equation}
\label{eq:tail_pool}
    P_{\textnormal{tail}}
    = \left \{ \left ( x_{\textnormal{current}}, x_{\textnormal{next}} \right ) \in P_{\textnormal{main}} \bigg | \left | x_{\textnormal{next}} - x_{\textnormal{current}} - \mu \right | > \texttt{CT} \right \},
\end{equation}
where the scalar $\mu$ is defined as the median of all consecutive differences in $P_{\textnormal{main}}$.
The median is preferred over the mean in the definition of $P_{\textnormal{tail}}$ because it leads to more stable and reliable recovery.
There is a practical method that can be used to determine \texttt{CT}, as described in \Cref{appendix:CT_selection}.
Here the full data pool is
\begin{equation}
\label{eq:main_pool}
    P_{\textnormal{main}}
    = \left \{ \left ( x_{j \Delta t}^{(i)}, x_{(j + 1) \Delta t}^{(i)} \right ) \right \}_{i = 1, j = 0}^{i = I, j = J - 1}.
\end{equation}
The set $P_{\textnormal{tail}}$ consists of transition pairs $\left ( x_{\textnormal{current}}, x_{\textnormal{next}} \right )$ with large jumps.
The tail ratio in the data is given by $\texttt{R}_{\textnormal{sample}} = |P_{\textnormal{tail}}| / |P_{\textnormal{main}}|$, where $| \cdot |$ here denotes the cardinality of a set.
This empirical ratio differs from the true tail ratio.
Given a probability density function $p$ with mean $\mu$, the true tail ratio is defined by $\texttt{R}_{\textnormal{actual}} = \int_{|x - \mu| > \texttt{CT}} p(x) \, d x$.
For the censored trajectory data, we usually have $\texttt{R}_{\textnormal{sample}} < \texttt{R}_{\textnormal{actual}}$. 
To correct the bias, we introduce a Tail Correction Factor (\texttt{TCF}) to place additional emphasis on $P_{\textnormal{tail}}$, which is determined from the following relation:
\begin{equation}\label{eq:correction_equation}
    \underbrace{(1 - \texttt{TCF}) \times \texttt{R}_{\textnormal{sample}}}_{\textnormal{normal sampling}} + \underbrace{\texttt{TCF} \times 1}_{\textnormal{tail sampling}}
    = \texttt{R}_{\textnormal{actual}}.
\end{equation}
This yields $\texttt{TCF} = (\texttt{R}_{\textnormal{actual}} - \texttt{R}_{\textnormal{sample}}) / (1 - \texttt{R}_{\textnormal{sample}})$.
The key idea is to modify the log-likelihood function $\ell(\theta)$ in \Cref{eq:log_likelihood_function_approximation} by introducing \texttt{TCF}-based sampling: at each step of stochastic gradient descent, data are drawn from $P_{\textnormal{tail}}$ with probability \texttt{TCF} and from $P_{\textnormal{main}}$ with probability $1 - \texttt{TCF}$. 
The influence of \texttt{TCF} values on the objective landscape is illustrated in \Cref{fig:TCF_explanation_landscape}.
In all the contour plots, the $x$-axis represents $\diffusionOrdinary$ and the $y$-axis represents $\diffusionFractional$, with known coefficient $\drift = 5$ and fractional exponent $\alpha = 0.3$.
These plots illustrate the influence of varying \texttt{TCF} on the location of the minimizer of the modified negative log-likelihood function with normalized output.

\begin{figure}[htp]
    \centering
    \includegraphics[width=\linewidth]{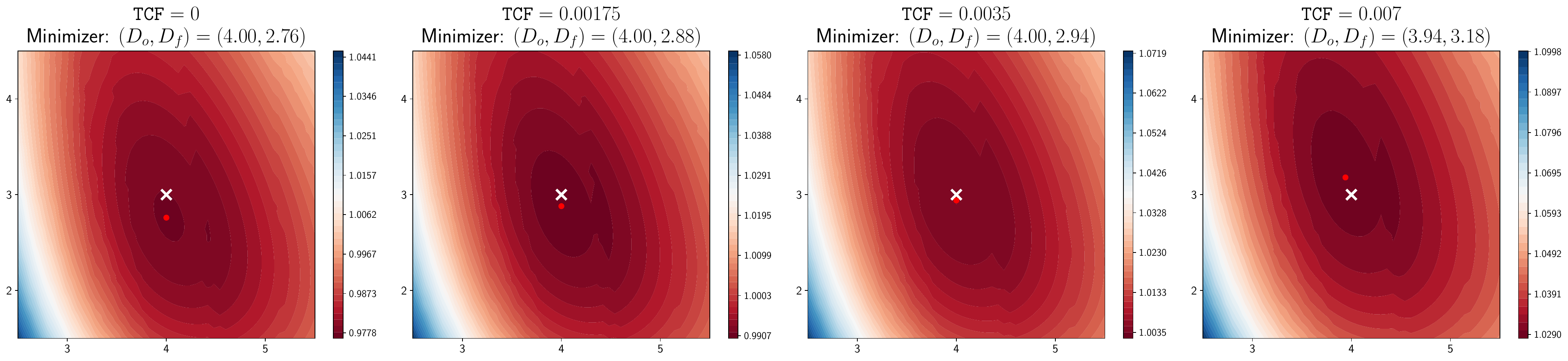}
    \caption
    {
        Contour plots showing the objective landscape for varying \texttt{TCF} values, with $\diffusionOrdinary$ on the $x$-axis and $\diffusionFractional$ on the $y$-axis.
        The white ``$\times$'' denotes the ground truth $(4, 3)$, while the red dot represents the minimizer of the objective for each respective \texttt{TCF} value.
        The observed shifts in the minimizer across different \texttt{TCF} values highlight the critical role of \texttt{TCF} selection in estimation accuracy. 
        This corresponds to a censored trajectory dataset obtained by an MCMC sampler.
    }
    \label{fig:TCF_explanation_landscape}
\end{figure}

To accurately determine the appropriate \texttt{TCF} value, it is essential to have knowledge of $\texttt{R}_{\textnormal{actual}}$, which in turn depends on the actual probability density function $p$ that we ultimately aim to recover.
Motivated by bootstrapping, we propose an adaptive method for recovering the coefficients and \texttt{TCF} together using the tail correction technique.
Specifically, we begin by estimating the coefficients under the assumption that \texttt{TCF} is zero.
Next, we update \texttt{TCF} using $\texttt{R}_{\theta}$ (substituting $\texttt{R}_{\textnormal{actual}}$ in \Cref{eq:correction_equation}), where $\texttt{R}_{\theta}$ is computed from the probability density function $p$ based on the previously estimated $\theta$.
This process is repeated iteratively until convergence is achieved or a preset iteration limit is reached.
\Cref{alg:Adam_with_tail_correction_concise} presents a concise version of coefficient recovery using the tail correction technique, and \Cref{alg:Adam_with_tail_correction} provides a detailed implementation.
The effectiveness and implementation of the adaptive tail correction technique are demonstrated in \Cref{fig:adaptive_paths}.
A detailed theoretical analysis of the \texttt{TCF} is beyond the scope of this paper and will be addressed in future work.

\begin{figure}[htp]
    \centering
    \includegraphics[width=0.38\linewidth]{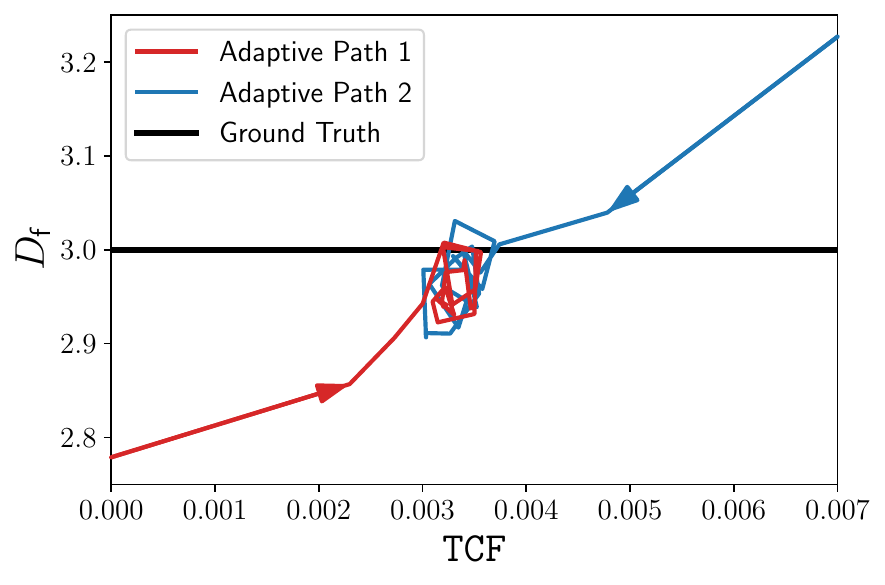}
    \hfill
    \includegraphics[width=0.21\linewidth]{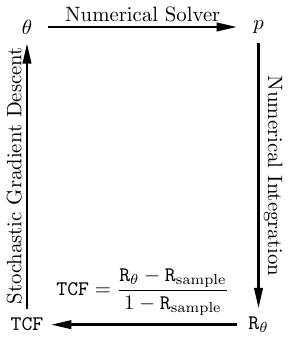}
    \hfill
    \includegraphics[width=0.38\linewidth]{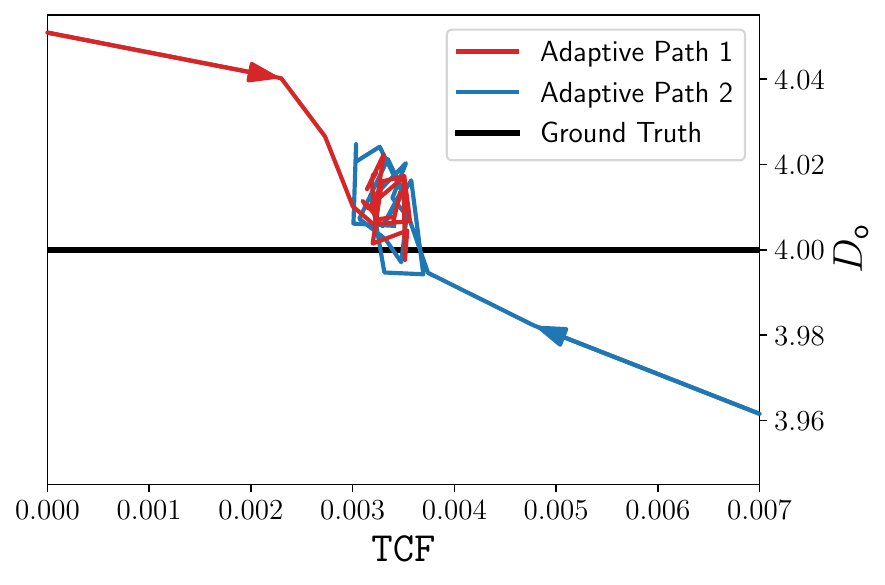}
    \caption
    {
        The left and right graphs illustrate how adaptively adjusting the \texttt{TCF} improves the accuracy of recovering the coefficients $\diffusionFractional$ and $\diffusionOrdinary$.
        The central diagram illustrates the process of adaptively applying the tail correction technique.
        This corresponds to a censored trajectory dataset obtained by an MCMC sampler.
        In practice, we update the \texttt{TCF} in each step to enhance the overall efficiency of the process, as outlined in \Cref{alg:Adam_with_tail_correction_concise}.
    }
    \label{fig:adaptive_paths}
\end{figure}

\begin{algorithm}[htbp]
    \caption{Robust Maximum Likelihood Estimation (Concise Version)}\label{alg:Adam_with_tail_correction_concise}
    \begin{algorithmic}[1]
        \Require All the trajectories $P$ and the cutting threshold \texttt{CT}
        \Ensure $\theta$ representing the estimated coefficients

        \State Randomly initialize $\theta$ to make sure $\diffusionOrdinary, \diffusionFractional \geq 0$;
        $\texttt{TCF} \gets 0$
        \State $P_{\textnormal{main}}, P_{\textnormal{tail}} \gets $ the main sample pool and the tail part sample pool based on \texttt{CT}
        \While{within the step limit \textbf{and} the moving average of $\theta$ is not converged}
            \State $\ell(\theta)$ is calculated by samples from $P_{\textnormal{tail}}$ with probability \texttt{TCF}, otherwise from $P_{\textnormal{main}}$
            \State $\boldsymbol{g} \gets$ the gradient of $- \ell(\theta)$
            \State Maintain the first and second moments $\boldsymbol{m}, \boldsymbol{v}$ of $\boldsymbol{g}$
            \State Update $\theta$ using $\boldsymbol{m}, \boldsymbol{v}$ and the learning rate, then update $\texttt{TCF}$ using $\theta$ and \texttt{CT}
        \EndWhile
            \Comment{Detailed version can be found at \Cref{alg:Adam_with_tail_correction}}
        \State \Return the moving average of $\theta$
    \end{algorithmic}
\end{algorithm}

\subsection{Policy Evaluation}\label{sec:policy_evaluation_method}
Policy evaluation involves solving the value function defined by the PDE given in \Cref{lemma:policy_evaluation} using the estimated coefficients
\begin{equation*}
    \widehat{\Theta}(x; \widehat{\theta})
    :=
    \left (
        \widehat{\drift}(x; \widehat{\theta}),
        \widehat{\diffusionOrdinary}(x; \widehat{\theta}),
        \widehat{\diffusionFractional}(x; \widehat{\theta})
    \right ).
\end{equation*}
For simplicity, in the remainder of the paper, we omit the dependence on $\theta$ and $\widehat{\theta}$ in $\Theta(x)$ and $\widehat{\Theta}(x)$, respectively, when the context is clear.
More precisely, we look for the approximate value function $\widehat{V}$ satisfying
\begin{equation}
\label{eq:PDE_value_function_approx}
    \beta \widehat{V}(x)
        = r(x)
        + \widehat{\drift}(x) \cdot \nabla \widehat{V}(x)
        + \widehat{\diffusionOrdinary}(x) : \nabla^{2} \widehat{V}(x)
        - \widehat{\diffusionFractional}(x) \left [ (- \Delta)^{\alpha} \widehat{V} \right ](x).
\end{equation}
To solve \Cref{eq:PDE_value_function_approx}, we apply the Fourier spectral method to determine $\widehat{V}$.

The following theorem, which holds true in general dimensions, provides the approximation error of $\widehat{V}$ to the true value function $V$, and the proof of it is given in \Cref{sec:proof_of_theorem}.

\begin{theorem}[Policy Evaluation Error]\label{thm:error_bound}
    Let $\gamma \in (0, 1)$ be a sufficiently small universal positive constant and $r(x), \Theta(x), \widehat{\Theta}(x)$ be $\gamma$-H\"older continuous and periodic functions defined on $\mathbb{R}^{d}$, with periodicity defined on the unit cell $Q = (0, 2 \pi]^{d}$.
    If $\| \Theta - \widehat{\Theta} \|_{C^{0, \gamma}(\mathbb{R}^{d})} \leq \epsilon$, then 
    \begin{equation*}
        \left \| V - \widehat{V} \right \|_{C^{2, \gamma}(\mathbb{R}^{d})}
        < C \epsilon
    \end{equation*}
    where $C > 0$ is independent of $\epsilon$, and $V, \widehat{V} \in C^{2, \gamma}(\mathbb{R}^{d})$ denotes the solution to \Cref{eq:PDE_value_function} and \Cref{eq:PDE_value_function_approx}, respectively.
\end{theorem}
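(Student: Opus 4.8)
The plan is to pass to the difference $w := V - \widehat{V}$, show that it satisfies a \emph{linear} PIDE whose right-hand side is of size $O(\epsilon)$ in $C^{0,\gamma}$, and then upgrade this to an $O(\epsilon)$ bound on $\|w\|_{C^{2,\gamma}}$ by combining a maximum principle with the Schauder theory for the associated local--nonlocal operator. The first ingredient is a uniform a priori estimate: applying \Cref{lem:regularity} to \Cref{eq:PDE_value_function} and to \Cref{eq:PDE_value_function_approx}, both $V$ and $\widehat{V}$ are periodic functions in $C^{2,\gamma}(\mathbb{R}^{d})$. Assuming $\epsilon \le 1$ we have $\|\widehat{\Theta}\|_{C^{0,\gamma}} \le \|\Theta\|_{C^{0,\gamma}} + 1$, and for $\epsilon$ small enough $\widehat{\diffusionOrdinary}$ remains uniformly positive definite and $\widehat{\diffusionFractional}$ stays bounded below by a positive constant, so \Cref{lem:regularity} yields a bound $\|V\|_{C^{2,\gamma}}, \|\widehat{V}\|_{C^{2,\gamma}} \le C_{0}$ with $C_{0}$ depending only on $d,\alpha,\beta,\gamma,\|r\|_{C^{0,\gamma}},\|\Theta\|_{C^{0,\gamma}}$ and the ellipticity constants, not on $\epsilon$. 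Since $\alpha \in (0,1)$, on the torus $(-\Delta)^{\alpha}$ has order $2\alpha < 2$ and maps $C^{2,\gamma}$ continuously into $C^{2-2\alpha+\gamma} \hookrightarrow C^{0,\gamma}$, so in particular $\|(-\Delta)^{\alpha}\widehat{V}\|_{C^{0,\gamma}} \le C C_{0}$.

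Next I would subtract \Cref{eq:PDE_value_function_approx} from \Cref{eq:PDE_value_function}, adding and subtracting terms so as to group all derivatives acting on $w$, which gives
\begin{equation*}
    \beta w - \drift \cdot \nabla w - \diffusionOrdinary : \nabla^{2} w + \diffusionFractional \, (-\Delta)^{\alpha} w = g,
\end{equation*}
where
\begin{equation*}
    g := (\drift - \widehat{\drift}) \cdot \nabla \widehat{V} + (\diffusionOrdinary - \widehat{\diffusionOrdinary}) : \nabla^{2} \widehat{V} - (\diffusionFractional - \widehat{\diffusionFractional}) \, (-\Delta)^{\alpha} \widehat{V}.
\end{equation*}
Because $C^{0,\gamma}$ is closed under multiplication, with $\|fh\|_{C^{0,\gamma}} \le C \|f\|_{C^{0,\gamma}} \|h\|_{C^{0,\gamma}}$, the previous paragraph together with the hypothesis $\|\Theta - \widehat{\Theta}\|_{C^{0,\gamma}} \le \epsilon$ yields $\|g\|_{C^{0,\gamma}} \le C C_{0}\, \epsilon$.

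It then remains to invert the linear operator $\mathcal{L} := \beta - \drift\cdot\nabla - \diffusionOrdinary:\nabla^{2} + \diffusionFractional(-\Delta)^{\alpha}$ with a constant independent of $\epsilon$. For the $L^{\infty}$ bound, I would use that $\beta > 0$ and that $-\diffusionOrdinary:\nabla^{2} + \diffusionFractional(-\Delta)^{\alpha}$ obeys the comparison principle: evaluating the equation at a global maximum (attained by periodicity), where $\nabla w = 0$, $\nabla^{2}w \le 0$ and $(-\Delta)^{\alpha}w \ge 0$, gives $\|w\|_{L^{\infty}} \le \beta^{-1}\|g\|_{L^{\infty}} \le C\epsilon$. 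For the top-order bound, $\mathcal{L}$ has $C^{0,\gamma}$ coefficients, uniformly elliptic principal part $-\diffusionOrdinary:\nabla^{2}$, and a nonlocal term of order $2\alpha<2$ which is strictly lower order relative to $\nabla^{2}$; interior Schauder estimates for second-order elliptic equations (global here, since we work on the torus $Q$), combined with an interpolation argument absorbing the nonlocal perturbation — equivalently, the Schauder theory for integro-differential equations — give $\|w\|_{C^{2,\gamma}} \le C(\|g\|_{C^{0,\gamma}} + \|w\|_{L^{\infty}})$. Since $w \in C^{2,\gamma}$ already by \Cref{lem:regularity}, this is an a priori estimate rather than a bootstrap. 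Combining the three estimates gives $\|V - \widehat{V}\|_{C^{2,\gamma}} = \|w\|_{C^{2,\gamma}} \le C\epsilon$.

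The main obstacle is the nonlocal Schauder step (and the accompanying uniform a priori estimate from \Cref{lem:regularity}): one must verify that the constant in the Schauder estimate for $\mathcal{L}$ is independent of $\epsilon$, which rests precisely on $\alpha < 1$ making $(-\Delta)^{\alpha}$ strictly subordinate to the Laplacian so that the classical elliptic machinery applies after absorbing the nonlocal term by interpolation. The remaining pieces — forming the difference equation, the Banach-algebra product estimate for $g$, and the maximum-principle bound — are routine.
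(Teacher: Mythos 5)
Your proposal is correct and follows essentially the same route as the paper's proof: form the difference equation, bound its right-hand side by $C\epsilon$ in $C^{0,\gamma}$ using the $C^{2,\gamma}$ regularity of the solution (\Cref{lem:regularity}) together with the bound $\|(-\Delta)^{\alpha}f\|_{C^{0,\gamma}}\leq C\|f\|_{C^{2,\gamma}}$ (\Cref{lem:fractional_Laplace_bound}), and then invert the frozen linear operator via the comparison-principle $L^{\infty}$ bound plus the Schauder-type estimate. The only cosmetic differences are that you apply $\mathcal{L}$ to $V-\widehat{V}$ with the error terms hitting $\widehat{V}$ (the paper symmetrically applies $\widehat{\mathcal{L}}$ with the error hitting $V$) and that you sketch re-deriving the inversion estimate rather than simply reusing \Cref{lem:regularity}, which is what the paper does.
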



\section{Numerical Experiments}\label{sec:numerical_experiments}
All experiments are conducted using MATLAB R2023a on a desktop equipped with an 11th Generation Intel\textsuperscript{\textregistered} Core\textsuperscript{\texttrademark} i7-11700F CPU and DDR4 2$\times$32GB 3600MHz memory.
Except for the real-data experiment in \Cref{sec:real_data_experiment}, we use synthetic data generated as described in \Cref{sec:framework}.
Data and code will be made available after the manuscript is accepted.

In \Cref{sec:underlying_dynamics_recovery_experiment}, we present numerical results for learning the underlying dynamics from both unbiased and censored trajectory data. 
\Cref{thm:error_bound} suggests that, within the proposed PDE framework, the error in evaluating the value function can be effectively managed by achieving accurate recovery of the coefficients.
This result is further validated through the numerical experiments presented in \Cref{sec:policy_evaluation_experiment}.

\subsection{Underlying Dynamics Recovery}\label{sec:underlying_dynamics_recovery_experiment}
When $\alpha$ is small, the trajectory data exhibits more frequent large jumps, making it more challenging to recover the underlying stochastic dynamics.
To address this, the tail correction technique described in \Cref{sec:TC_technique} proves useful.
In this subsection, we first present recovery results for unbiased trajectory data without applying the tail correction technique.
We then present the results for censored trajectory data where the technique is employed.
Such data may arise due to intrinsic bias in the data or from filtering out large jumps in unbiased trajectory data to enhance the stability of recovery.

In both \Cref{sec:unbiased_data,sec:censored_data}, we use trajectory data in which each trajectory consists of $41$ data points, corresponding to $40$ equal time intervals.
For each box plot, a total of 12 independent experiments are conducted.
Each experiment runs for 40,000 training steps, and a moving average with a window size of 20,000 steps is applied.
In all tests, we use the Adam optimizer \cite{kingma2014adam} to recover the coefficients, with gradients computed by \Cref{alg:FFPE_direct_gradient}.

\subsubsection{Unbiased Trajectory Data}\label{sec:unbiased_data}
\begin{example}\label{eg:unbiased_constant}
    The ground truth coefficients are $(\drift, \diffusionOrdinary, \diffusionFractional) = (5, 4, 3)$.
    We assume that we know the target coefficients are constant and set $K = 1$.
    The time difference between consecutive points is $\Delta t = 1 / 40$.
    No tail correction technique is applied in this example.
    A visualization of the results for both $\alpha = 0.3$ and $\alpha = 0.6$ is provided in \Cref{fig:unbiased_constant}.
\end{example}

As indicated in \Cref{fig:unbiased_constant}, as the number of trajectories increases, the error decreases, while the standard deviation of the estimates slightly increases.

\begin{figure}[htp]
    \centering
    \includegraphics[width=0.32\linewidth]{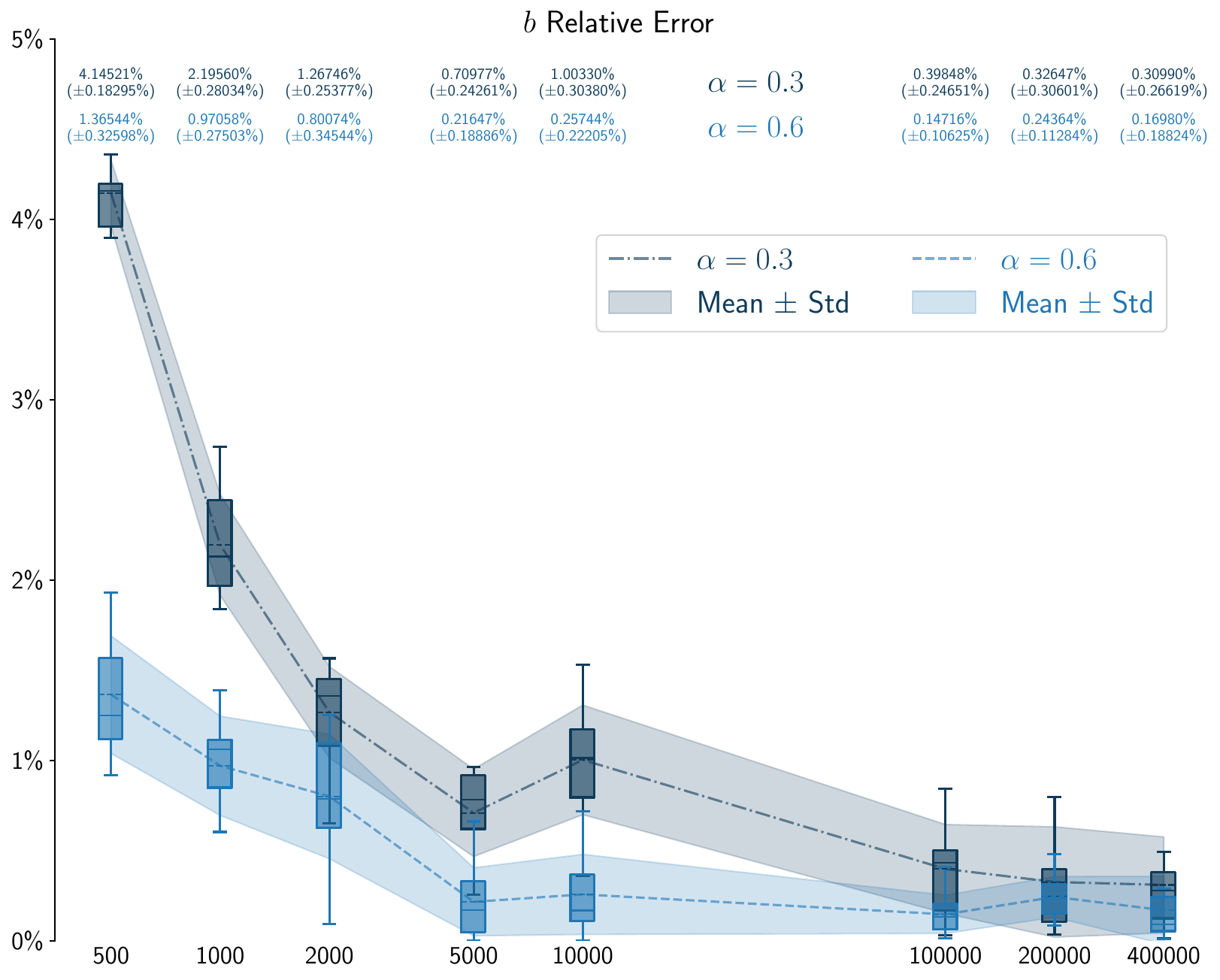}
    \hfill
    \includegraphics[width=0.32\linewidth]{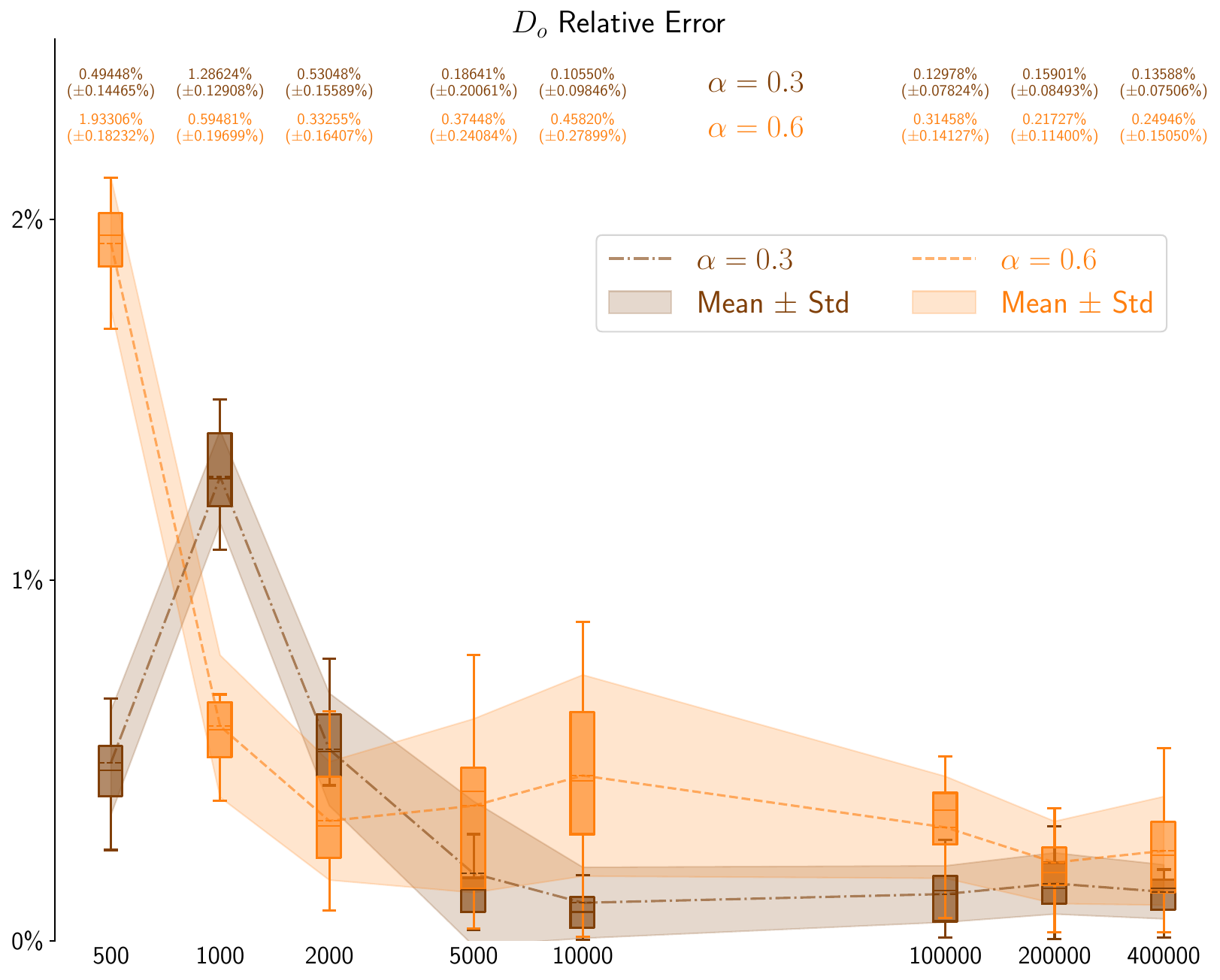}
    \hfill
    \includegraphics[width=0.32\linewidth]{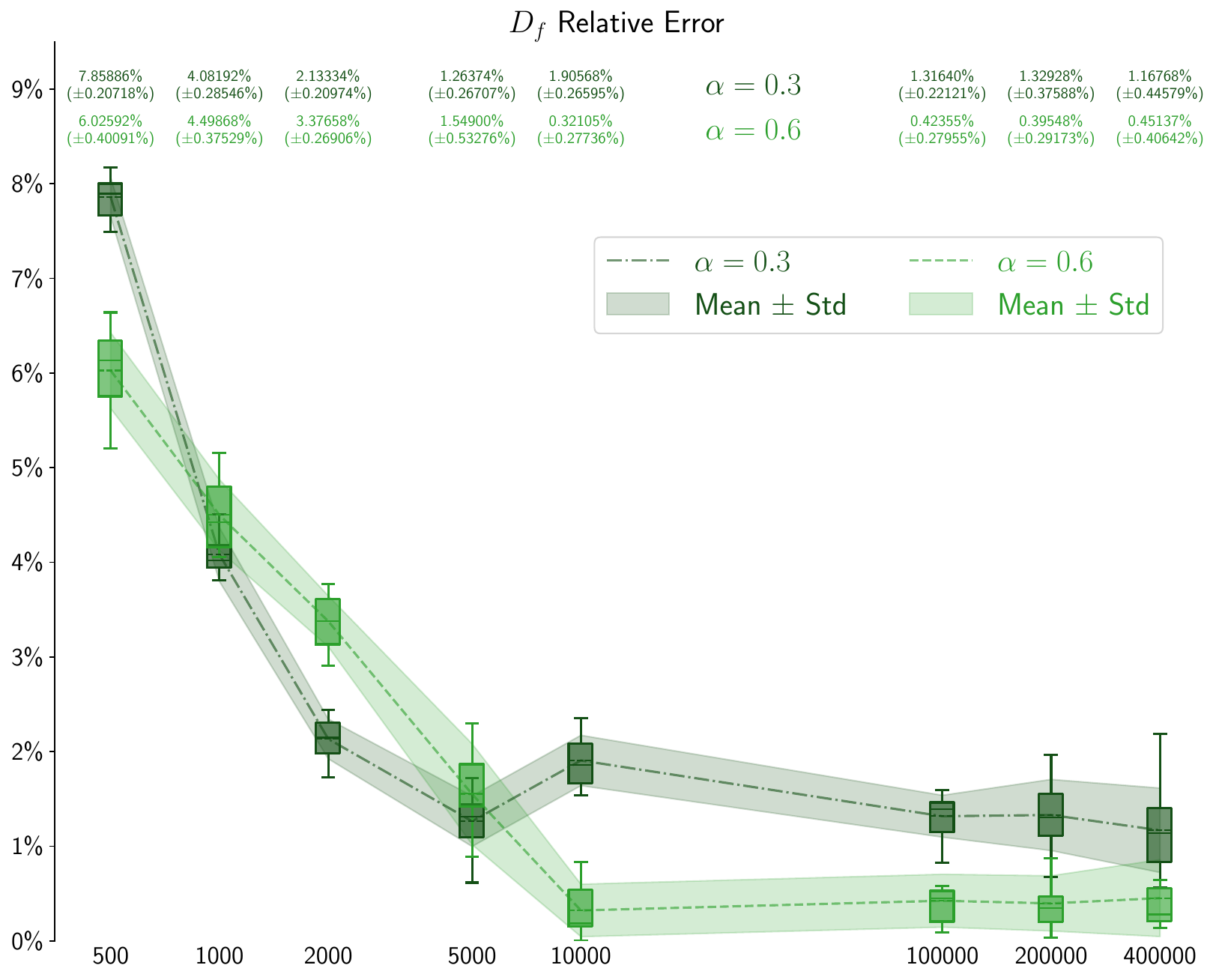}
    \caption
    {
        Relative errors of $\drift$ (left panel), $\diffusionOrdinary$ (middle panel), $\diffusionFractional$ (right panel) versus the number of trajectories for $\alpha = 0.3$ (deep color) and $\alpha = 0.6$ (light color).
        Related to \Cref{eg:unbiased_constant}.
    }
    \label{fig:unbiased_constant}
\end{figure}

\begin{example}\label{eg:unbiased_variable}
    The ground truth coefficients are
    $\drift(x) = 4 |(x \text{ mod } 2 \pi) - \pi| - 2 \pi$,
    $\diffusionOrdinary(x) = \exp(\sin(x + 1) + 1)$,
    $\diffusionFractional(x) = 2 + \exp(\sin(2 x) \cos(3 x))$ (depicted in the right panel of \Cref{fig:unbiased_variable}).
    We set $K = 21$.
    The time difference between consecutive points is $\Delta t = 1 / 400$.
    No tail correction technique is applied in this example.
    A visualization of the results for $\alpha = 0.3$ is presented in \Cref{fig:unbiased_variable}.
\end{example}

In the subsequent tests, we concentrate on the case $\alpha = 0.3$, as it poses a greater challenge for coefficient recovery, especially for variable coefficients.
In \Cref{eg:unbiased_variable}, we set $\Delta t = 1 / 400$ to reduce the impact of approximation error arising from the use of constant coefficients to approximate the variable-coefficient case at each time step.
The results corresponding to $\Delta t = 1 / 40$ with 400,000 trajectories will be presented in \Cref{eg:censored_half_tail_variable} as a baseline for comparison with the censored trajectory data, both with and without the application of the tail correction technique.
When $\alpha$ is small, the recovery results become unstable, with certain ``outliers'' deviating significantly from the ground truth.
This is linked to the strong influence of data with large jumps.
In the left panel of \Cref{fig:unbiased_variable}, we compute errors after removing these outliers.
The error decreases as the number of trajectories increases.
The right panel of \Cref{fig:unbiased_variable} shows the ground truth functions as solid lines.
The recovered functions, excluding outliers, are plotted with shaded bands representing the mean $\pm$ one standard deviation.
The outliers appear as opaque semi-transparent lines, showing their significant impact on the recovery error.

\begin{figure}[htp]
    \centering
    \includegraphics[width=.48\linewidth]{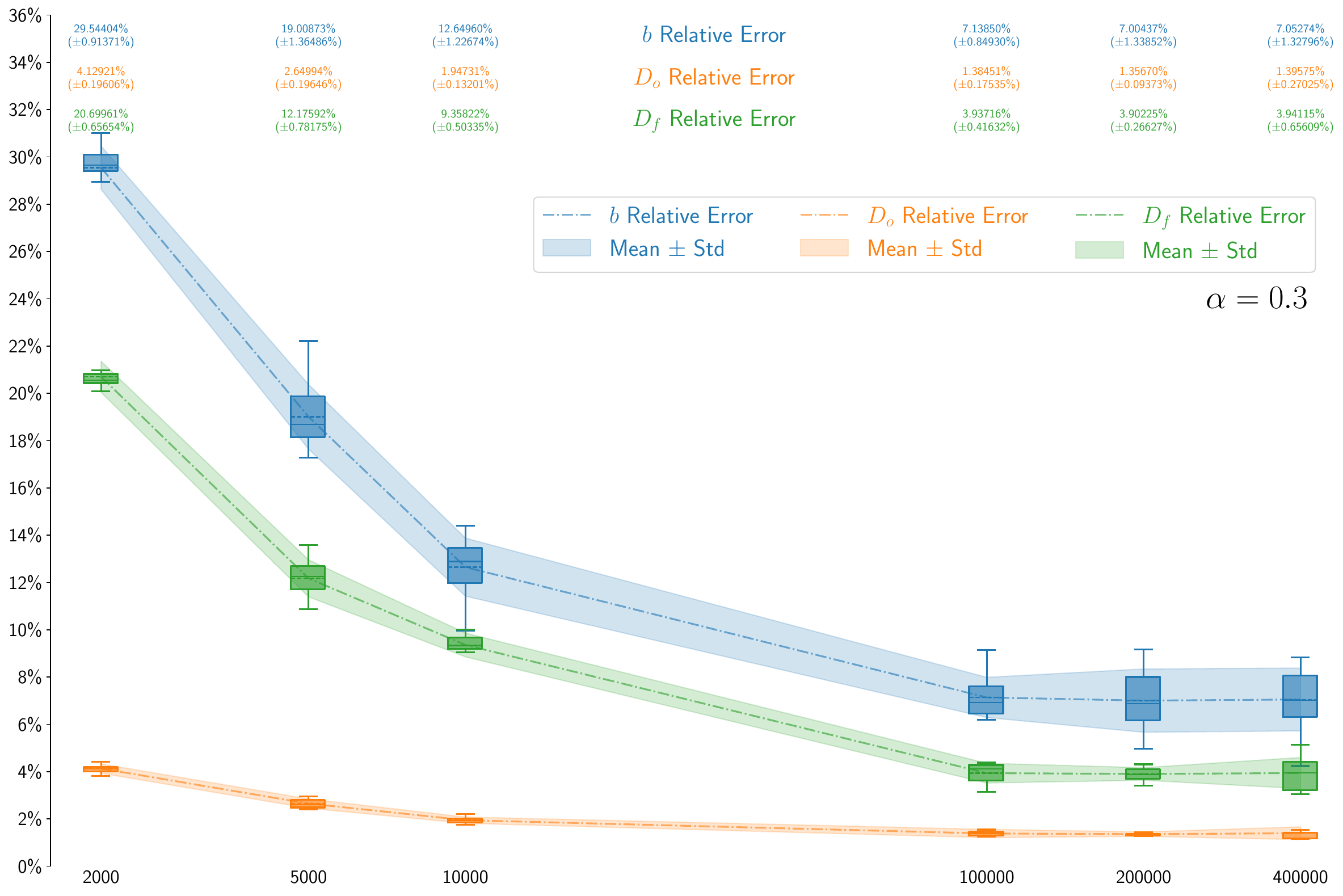}
    \hfill
    \includegraphics[width=.48\linewidth]{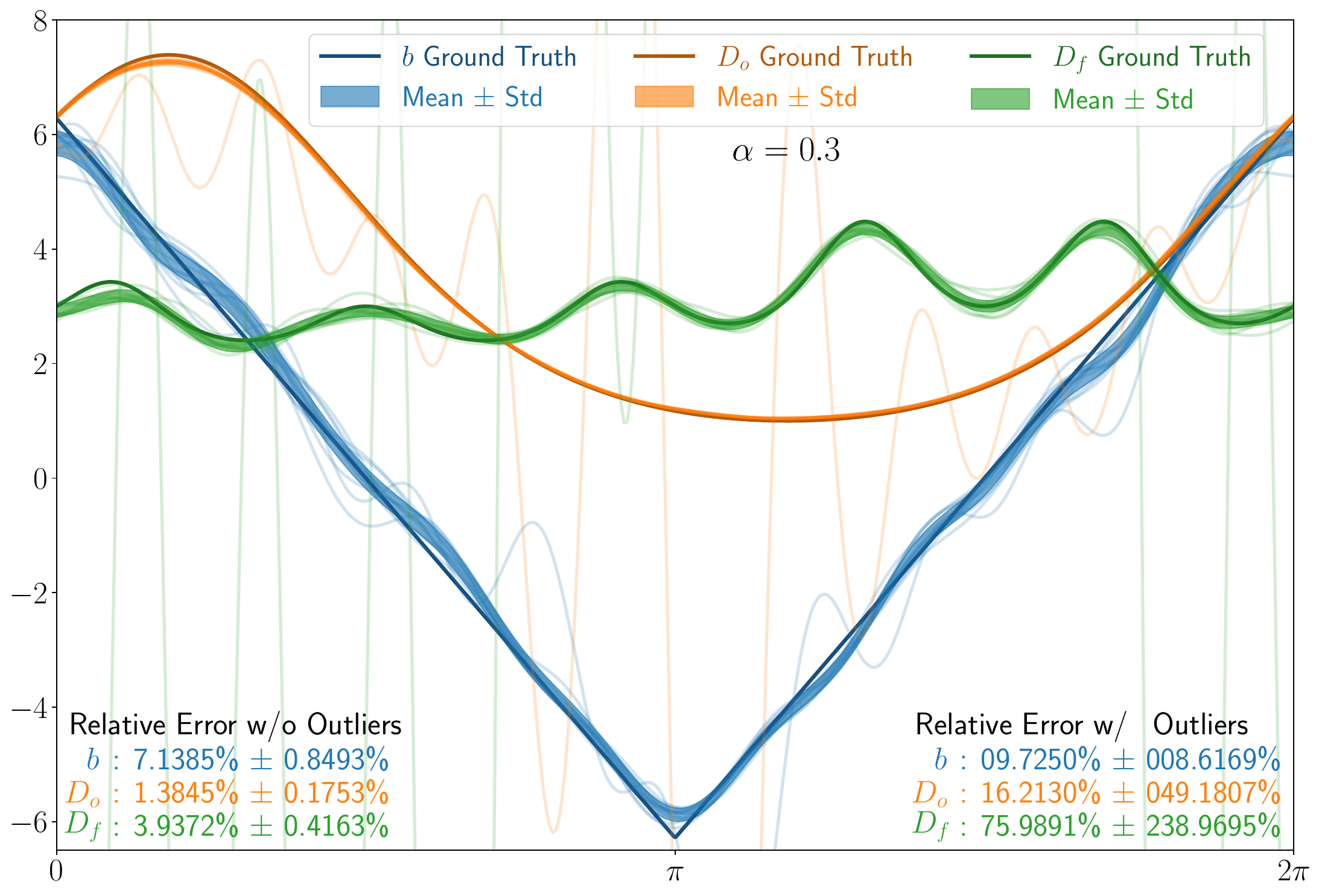}
    \caption
    {
        Left panel: Relative error (after removing the outliers) of $\drift, \diffusionOrdinary, \diffusionFractional$ versus the number of trajectories for $\alpha = 0.3$.
        Right panel: Recovery results for 100,000 trajectories for $\alpha = 0.3$, comparing predicted variables with ground truth.
        Related to \Cref{eg:unbiased_variable}.
    }
    \label{fig:unbiased_variable}
\end{figure}

To address the instability in recovery for heavy-tailed data, we filter out tail data from unbiased trajectory data, making it a type of censored trajectory data.
We introduce a new parameter, referred to as the tail removal threshold (\texttt{TRT}), which is used to filter out all pairs $(x_{\textnormal{current}}, x_{\textnormal{next}})$ that satisfy the condition $|x_{\textnormal{next}} - x_{\textnormal{current}} - \mu| \geq \texttt{TRT}$ from both sample pools, $P_{\textnormal{main}}$ and $P_{\textnormal{tail}}$.
A detailed description of the algorithm, including the implementation of \texttt{TRT}, is presented in \Cref{alg:Adam_with_tail_correction}.
In all experiments, we retain only tails within $\texttt{TRT} = 20$ and consider the remaining tails that exceed $\texttt{CT} = 8$ in the tail sample pool.
Next, we present numerical results on censored trajectory data and demonstrate the accuracy and robustness of our approach.

\subsubsection{Censored Trajectory Data}\label{sec:censored_data}
Two types of censored trajectory data are discussed in \Cref{sec:framework}.
More specifically, the filtering-based censored trajectory data is generated by first removing all jumps greater than or equal to $\texttt{TRT} = 20$ from the unbiased trajectory data.
Subsequently, a fixed seed, defined by the number of trajectories, is used to randomly discard half of the samples from the tail sample pool.
The corresponding examples are provided in \Cref{eg:censored_half_tail_constant,eg:censored_half_tail_variable}.
Another type of data is generated using random walk Metropolis-Hastings sampling with a burn-in number of 5,000, such that the exploration region of the data remains constrained within a limited range.
The corresponding example is provided in \Cref{eg:censored_MCMC_constant}.

\begin{example}\label{eg:censored_half_tail_variable}
    The ground truth coefficients are
    $\drift(x) = 4 |(x \text{ mod } 2 \pi) - \pi| - 2 \pi$,
    $\diffusionOrdinary(x) = \exp(\sin(x + 1) + 1)$,
    $\diffusionFractional(x) = 2 + \exp(\sin(2 x) \cos(3 x))$.
    We set $K = 21$.
    The time difference between consecutive points is $\Delta t = 1 / 40$.
    The censored trajectory data obtained through filtering are used in this case.
    A visualization of the results for $\alpha = 0.3$ is presented in \Cref{fig:censored_half_tail_variable}.
\end{example}

In \Cref{fig:censored_half_tail_variable}, we present results only for $\diffusionFractional$, as $\drift$ and $\diffusionOrdinary$ are largely unaffected by the tail correction technique. The results demonstrate the effectiveness of tail correction on censored trajectory data.
Separate graphs corresponding to the right panel in \Cref{fig:censored_half_tail_variable} are provided in \Cref{fig:comparison}.

\begin{figure}[htp]
    \centering
    \includegraphics[width=.48\linewidth]{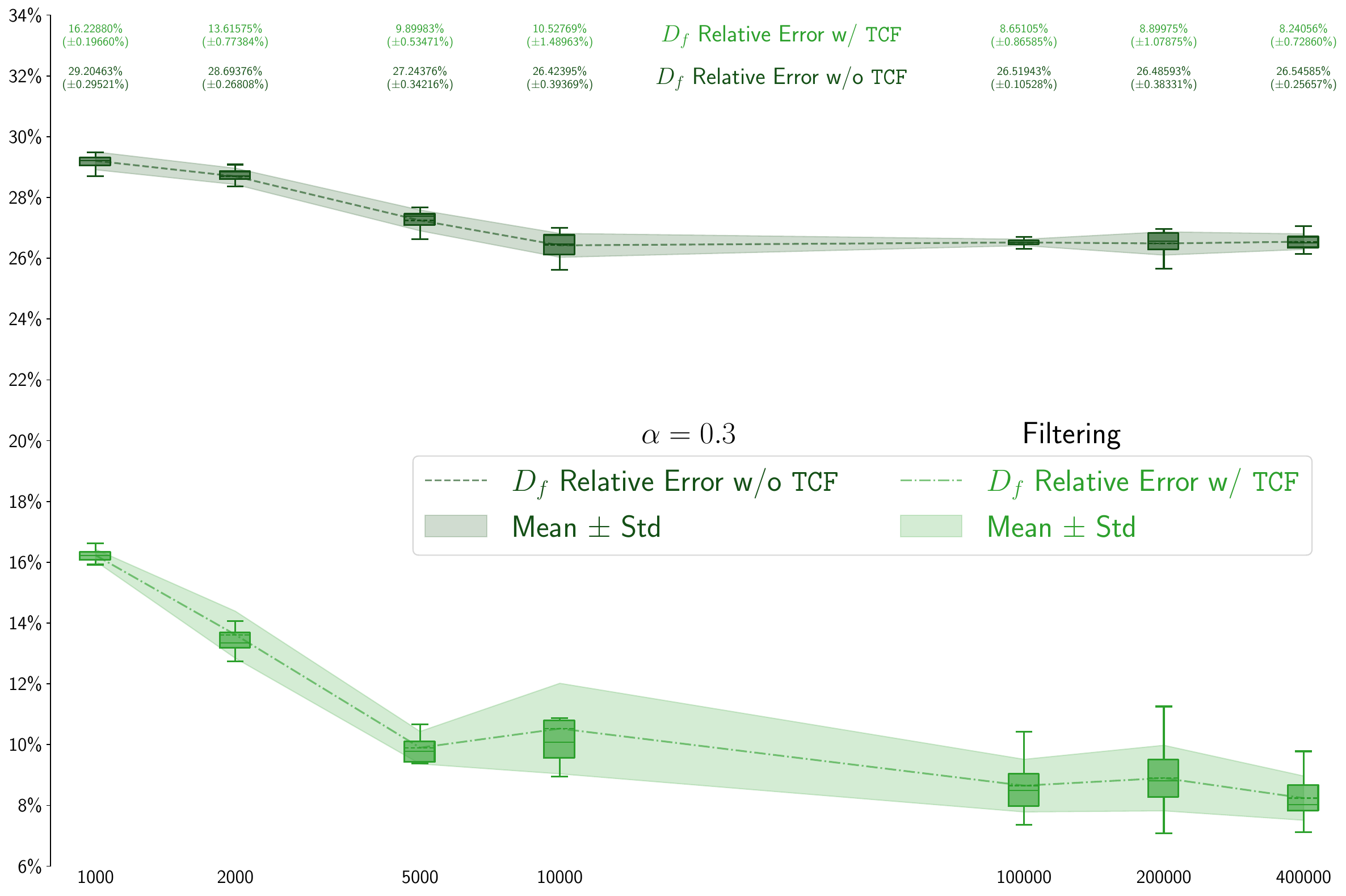}
    \hfill
    \includegraphics[width=.48\linewidth]{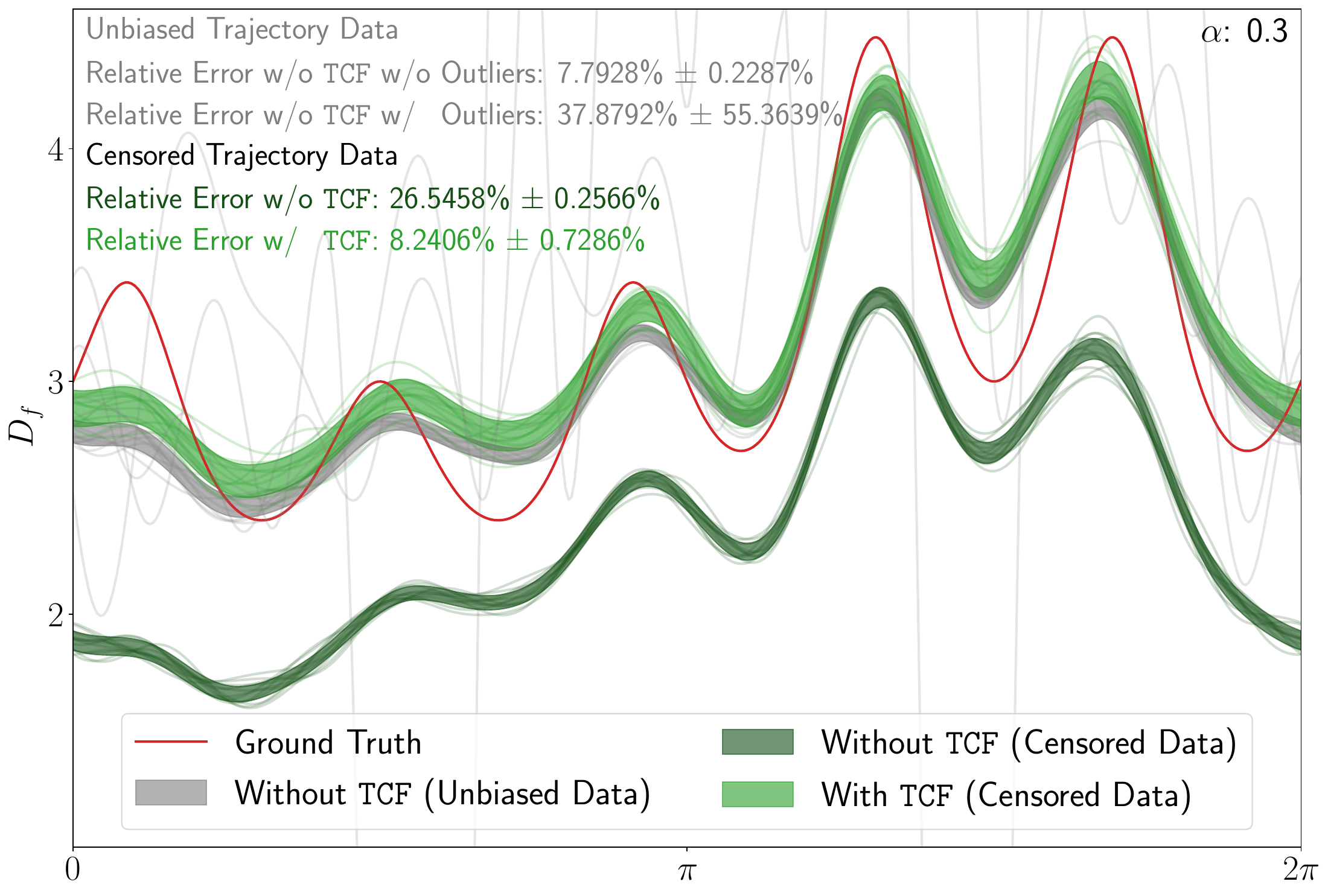}
    \caption
    {
        Left panel: Relative error of $\diffusionFractional$ versus the number of trajectories for $\alpha = 0.3$, comparing results with and without \texttt{TCF}.
        Right panel: Recovery results for 400,000 trajectories for $\alpha = 0.3$, showing the effect of \texttt{TCF}.
        Related to \Cref{eg:censored_half_tail_variable}.
    }
    \label{fig:censored_half_tail_variable}
\end{figure}

\begin{example}\label{eg:censored_half_tail_constant}
    The ground truth coefficients are $(\drift, \diffusionOrdinary, \diffusionFractional) = (5, 4, 3)$.
    We assume that we know the target coefficients are constant and set $K = 1$.
    The time difference between consecutive points is $\Delta t = 1 / 40$.
    The censored trajectory data obtained through filtering are used in this case.
    A visualization of the results for $\alpha = 0.3$ is provided in the left panel of \Cref{fig:censored_constant}.
\end{example}

\begin{example}\label{eg:censored_MCMC_constant}
    This example is similar to \Cref{eg:censored_half_tail_constant}, except that the censored trajectory data are generated using the MCMC sampler.
    A visualization of the results for $\alpha = 0.3$ is provided in the right panel of \Cref{fig:censored_constant}.
\end{example}

From \Cref{eg:censored_half_tail_constant,eg:censored_MCMC_constant}, it can be observed that the application of \texttt{TCF} significantly enhances the estimation accuracy of $\diffusionFractional$, although accompanied by increased variance.
The primary reason for this increased variance is that, at each step of gradient descent, sampling from the tail data pool occurs with probability \texttt{TCF}, generally a small value, introducing additional stochasticity into the gradient descent process.
With a sufficiently large dataset, the incorporation of \texttt{TCF} effectively mitigates methodological bias.
Additionally, based on \Cref{fig:censored_constant}, the threshold number of trajectories required to achieve accurate estimates for $\alpha = 0.3$ varies between the cases \Cref{eg:censored_half_tail_constant,eg:censored_MCMC_constant}.

\begin{figure}[htp]
    \centering
    \includegraphics[width=.48\linewidth]{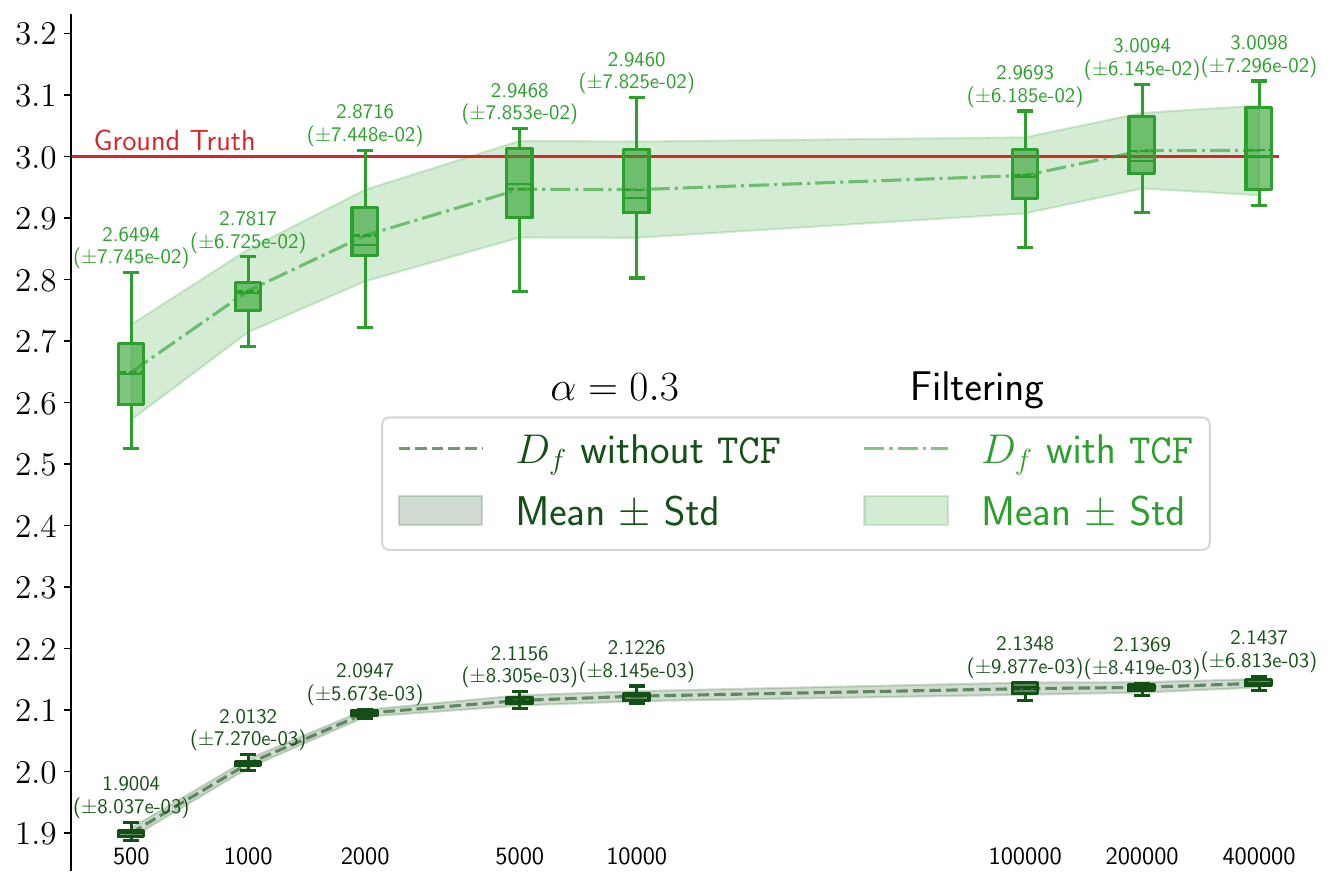}
    \hfill
    \includegraphics[width=.48\linewidth]{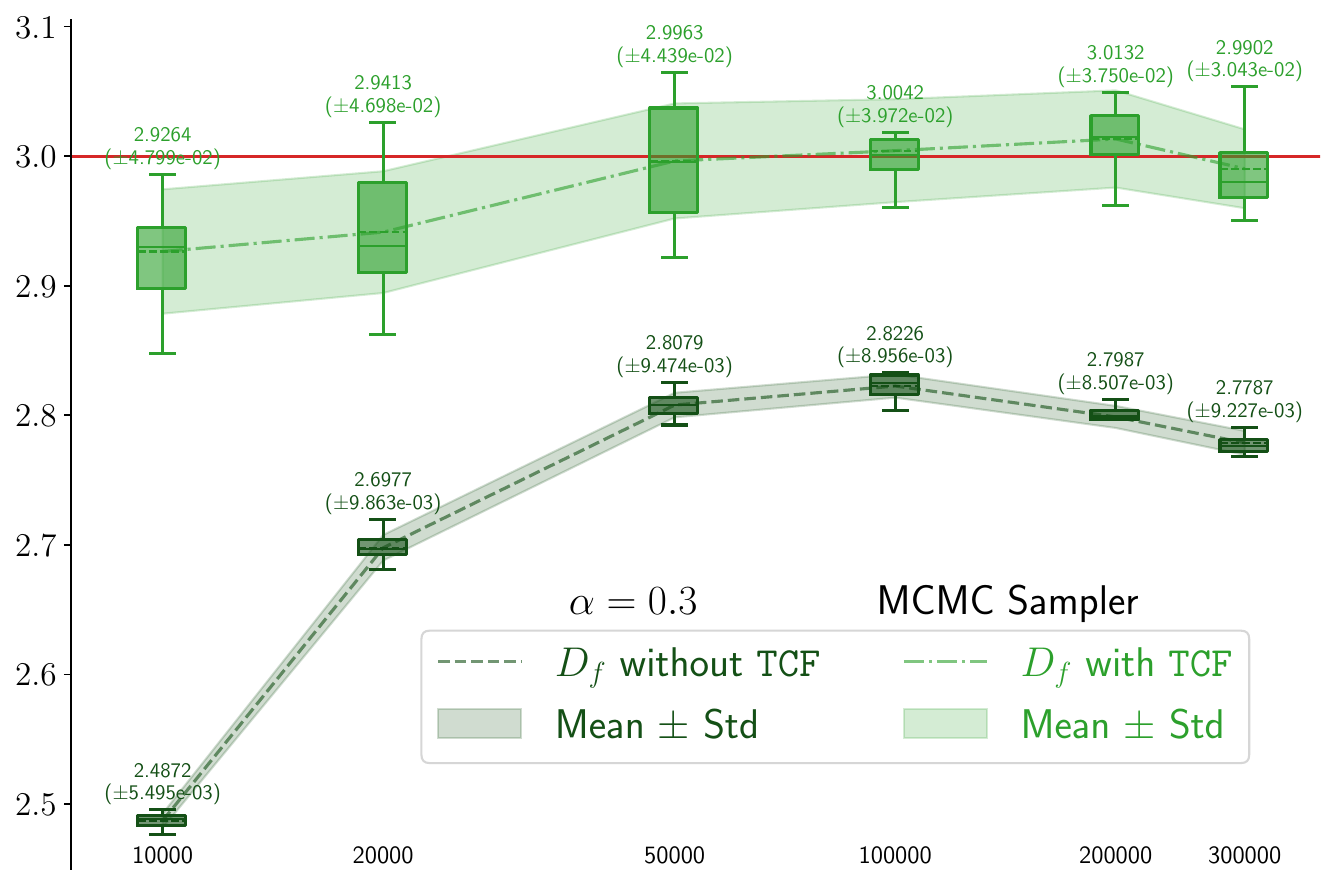}
    \caption
    {
        Left panel: Comparison of cases with and without \texttt{TCF} for $\alpha = 0.3$, using censored trajectory data obtained through filtering.
        Related to \Cref{eg:censored_half_tail_constant}.
        Right panel: Comparison of cases with and without \texttt{TCF} for $\alpha = 0.3$, using censored trajectory data generated by the MCMC sampler.
        Related to \Cref{eg:censored_MCMC_constant}.
    }
    \label{fig:censored_constant}
\end{figure}

\subsection{Policy Evaluation}\label{sec:policy_evaluation_experiment}
In this subsection, numerical experiments are conducted to illustrate the policy-evaluation error behavior described by \Cref{thm:error_bound}.
This illustration uses numerical examples from \Cref{eg:censored_half_tail_variable,eg:censored_half_tail_constant} in \Cref{sec:policy_evaluation_error}, as well as asymptotic results from a manufactured problem in \Cref{sec:asymptotic_linear_dependence}.
All experiments in this subsection aim to solve \Cref{eq:PDE_value_function_approx} with the parameter $\beta = 0.1$.
The manufactured solution is defined as $V(x) = \cos^{3}(2 x)$, from which the corresponding reward function $r(x)$ is derived using the ground truth coefficients $\drift$, $\diffusionOrdinary$, and $\diffusionFractional$.

\subsubsection{Policy Evaluation Error}\label{sec:policy_evaluation_error}
In this part, numerical experiments are conducted to illustrate the improvement in policy evaluation accuracy achieved through the tail correction technique.
The left panel of \Cref{fig:policy_evaluation_error} shows policy evaluation results obtained using the learned coefficients, alongside those computed with the ground truth coefficients, based on the setup described in \Cref{eg:censored_half_tail_variable}.
All 12 independent coefficient recovery results corresponding to the scenario with 400,000 trajectories are utilized in the figure.
Note that for unbiased data, three outliers were omitted from the graph on the left panel because they fall far outside the plotted domain.

\begin{figure}[htp]
    \centering
    \includegraphics[width=.48\linewidth]{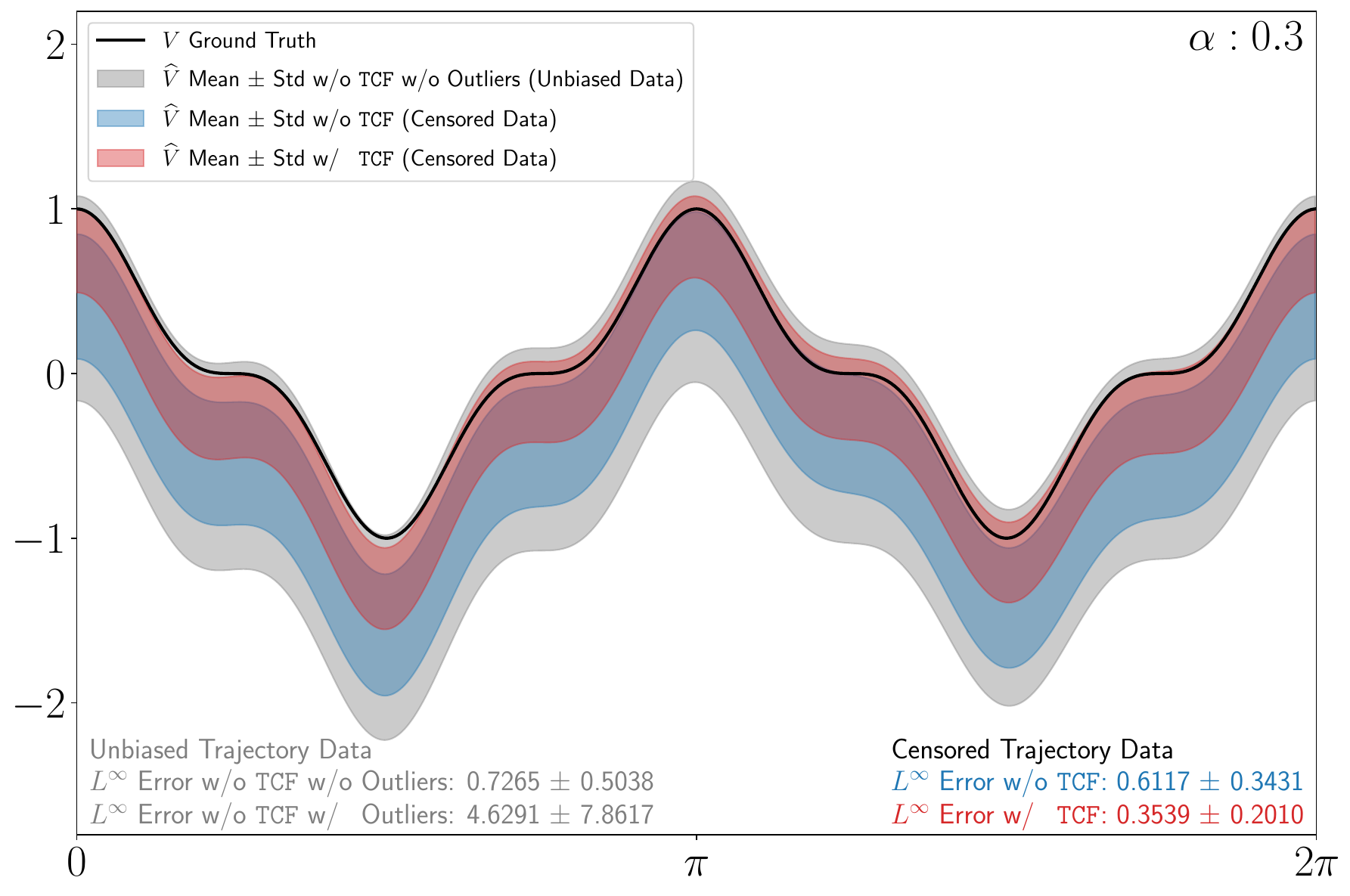}
    \hfill
    \includegraphics[width=.48\linewidth]{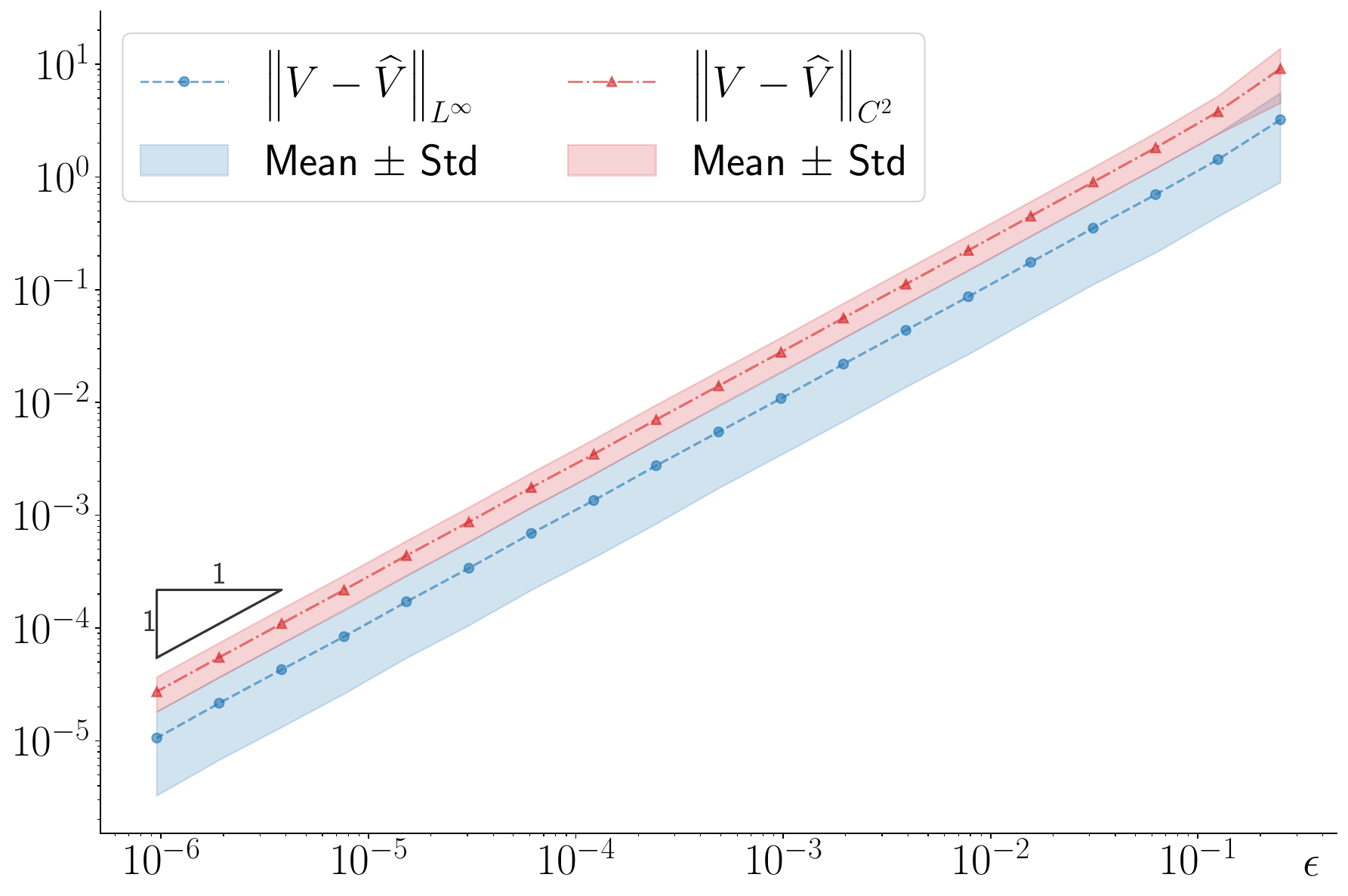}
    \caption
    {
        Left panel: Comparison of the computed value functions using results obtained from \Cref{eg:censored_half_tail_variable}, where the ground truth involves variable coefficients.
        For the unbiased-data curve, three outlying learned-coefficient runs are omitted from the plotted range and are discussed in the text.
        Right panel: Asymptotic linear dependence of the policy evaluation error on the estimation error in a manufactured problem with different perturbation magnitudes.
    }
    \label{fig:policy_evaluation_error}
\end{figure}

As demonstrated in \Cref{fig:policy_evaluation_error}, the policy evaluation using coefficients recovered with the tail correction technique yields improved performance. 

\subsubsection{Asymptotic Rate Study}\label{sec:asymptotic_linear_dependence}
In this part, numerical experiments are conducted to investigate the asymptotic linear dependence of the policy evaluation error on the estimation error as established in \Cref{thm:error_bound}.
Specifically, we consider a manufactured problem with coefficients chosen as follows: $\drift(x) = \sin^{4}(x)$, $\diffusionOrdinary(x) = \cos^{2}(x) + |\sin(x)|$ and $\diffusionFractional(x) = \sin(4 x) + 2$.

To numerically verify the linear dependence on the estimation error parameter $\epsilon$, we perturb each coefficient independently by adding Gaussian noise with distribution $\mathcal{N}(0, \epsilon)$, repeating the procedure for 10,000 trials at each chosen value of $\epsilon$. 
The numerical results are illustrated in the right panel of \Cref{fig:policy_evaluation_error}.

Alternative perturbation strategies, such as adding or subtracting Gaussian- or wedge-shaped functions with amplitude proportional to $\epsilon$, produce qualitatively similar outcomes.
Hence, we present only one representative scenario in this study.

\subsection{Real-Data Experiment}\label{sec:real_data_experiment}
In addition to the synthetic experiments, we conduct a real-data experiment using the Bitcoin (BTC) price as a representative continuously traded asset.
This choice avoids artificial day-gap effects caused by overnight market closures.
Data are obtained through IBKR API at 3-minute resolution, covering August 15, 2021 to March 11, 2026, with 634,551 observations.
The Bitcoin open price ranges from approximately 15,561 USD to 126,078 USD over this period.
Using this empirical trajectory, we construct discounted empirical values for the reward function $r(x) = \cos^{ 3 } \left ( 2 \pi ( x + 128000 ) / 256000 \right )$ with discount parameter $\beta = 0.3$.
For each starting index $i$, the empirical value is
\begin{equation*}
    \widetilde{V}( x_{ i } )
    =
    \Delta t
    \sum_{ j = 0 }^{ 3000 }
    \exp( - \beta j \Delta t )
    r( x_{ i + j } ),
\end{equation*}
where $\Delta t = 1 / 20$ is the 3-minute sampling interval measured in hours.
As a reference curve, $\widehat{ \widetilde{V} }$ is obtained by a least-squares Fourier approximation to these empirical values.
For each estimated generator
\begin{equation*}
    \widehat{\mathcal{ L }}
    :=
    \widehat{\drift}(x) \cdot \nabla
    +
    \widehat{\diffusionOrdinary}(x) : \nabla^{2}
    -
    \widehat{\diffusionFractional}(x) (- \Delta)^{\widehat{\alpha}},
\end{equation*}
the ODE-based value function is obtained from the resolvent equation
\begin{equation*}
    ( \beta I - \widehat{ \mathcal{ L } } ) [ \widehat{ V } ]( x )
    =
    r( x ).
\end{equation*}
We approximate $\widehat{ V }( x )$ using Fourier basis as
\begin{equation*}
    \widehat{ V }^{ K }( x )
    =
    \sum_{ i = 1 }^{ K } c_{ i } \phi_{ i }( x ).
\end{equation*}
The coefficients $\{ c_{ i } \}_{ i = 1 }^{ K }$ are not fitted directly to empirical policy-evaluation values.
Instead, they are chosen so that $\widehat{ V }^{ K }$ satisfies the ODE as accurately as possible on the data-supported window $\mathcal{ D }$.
This gives the least-squares problem
\begin{equation*}
    \min_{ \{ c_{ i } \}_{ i = 1 }^{ K } }
    \int_{ \mathcal{ D } }
    \left (
        \sum_{ i = 1 }^{ K } c_{ i }
        ( \beta I - \widehat{ \mathcal{ L } } )[ \phi_{ i } ]( x )
        -
        r( x )
    \right )^{ 2 }
    \, d x.
\end{equation*}
Thus the policy-evaluation curve fits discounted values computed from the observed trajectory, while the ODE curve fits the residual of the estimated generator equation.
Unlike the preceding synthetic experiments, where $\alpha$ is treated as known and fixed, this real-data experiment also learns $\alpha$ from the BTC trajectory.
In this subsection, we use $11$ Fourier basis functions for each variable coefficient and $\alpha$ is learned as a constant.
The corresponding gradient formula follows from the expansion of $\alpha(x)$ described in the remark in \Cref{appendix:numerical_method}.
\Cref{fig:real_data_advantages} compares the ODE-based value functions obtained from the learned generators with the empirical reference $\widehat{ \widetilde{V} }$.
In the left panel, the comparison uses the first $6000$ transitions and includes classical reinforcement learning (CRL).
Here, CRL means a direct model-free baseline that fits the discounted empirical values from the observed transitions without estimating the L\'evy generator and without applying the \texttt{TCF} correction.
In the right panel, the comparison uses fixed random $60000$ transitions.
The orange points are $5000$ samples from all empirical values $\widetilde{V}( x_{ i } )$, and the black curve is the reference $\widehat{ \widetilde{V} }$.
The red and blue bands show the mean $\pm$ one standard deviation, based on $8$ independent tests, of the selected ODE-based approximations $\widehat{V}$ with and without \texttt{TCF}, respectively.
The lower panel plots the corresponding differences from $\widehat{ \widetilde{V} }$, and the marked percentage reports the fraction of empirical values covered by the mean $\pm$ standard-deviation band.
The numerical outputs in \Cref{fig:real_data_advantages} are reported as relative $L^{2}$ errors and empirical coverage percentages.
In the first-$6000$-transition experiment, adding \texttt{TCF} improves the relative $L^{2}$ error from $12.04 \pm 11.83\%$ to $11.86 \pm 8.46\%$ and increases the coverage from $65.91\%$ to $99.68\%$.
The mean error improvement in this small-data test is modest, but the coverage increases even as the standard deviation of the error decreases.
In contrast, the same-data CRL baseline develops large oscillatory extrapolation errors outside the observed price range.
This indicates an advantage of using the learned stochastic generator: the method is not restricted to pointwise fitting of observed values, but can propagate information through the estimated differential operator.
In the random-$60000$-transition experiment, \texttt{TCF} reduces the relative $L^{2}$ error from $4.48 \pm 1.83\%$ to $3.76 \pm 0.92\%$ and increases the coverage from $21.80\%$ to $61.42\%$.
Thus, the coverage gain is not obtained by merely widening the uncertainty band; in both panels the coverage rises while the reported error standard deviation drops.
Overall, the main improvement is the comparison between the \texttt{TCF}-corrected generator and the same generator trained without \texttt{TCF}.
The smaller error in the random-$60000$ panel also suggests that using more transitions sampled across the price range improves the learned generator, although this comparison changes the data-selection protocol as well as the sample size.
The left lower panel marks in green the price range covered by the first $6000$ transitions.

\begin{figure}
    \centering
    \includegraphics[width=0.48\linewidth]{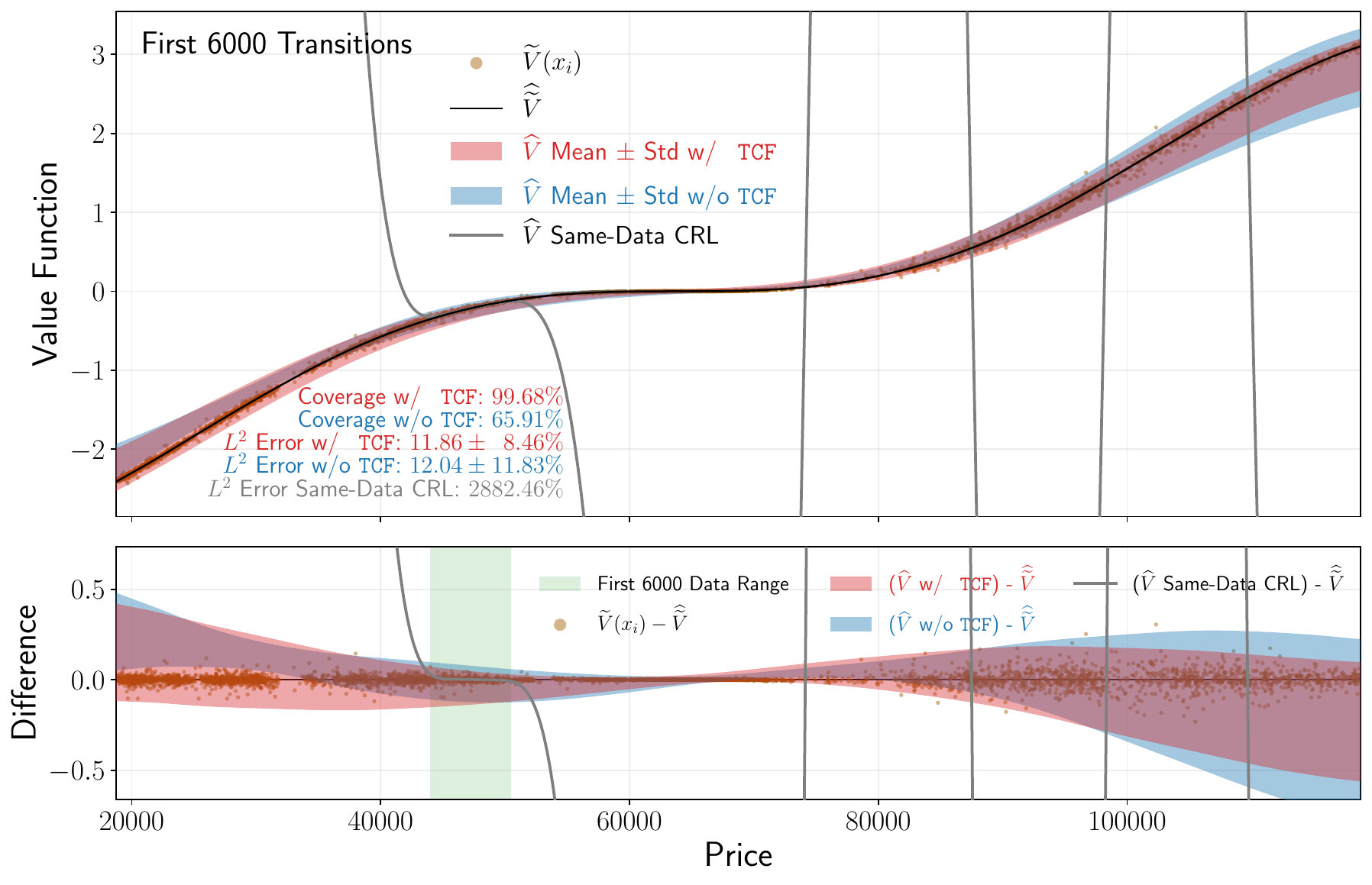}
    \hfill
    \includegraphics[width=0.48\linewidth]{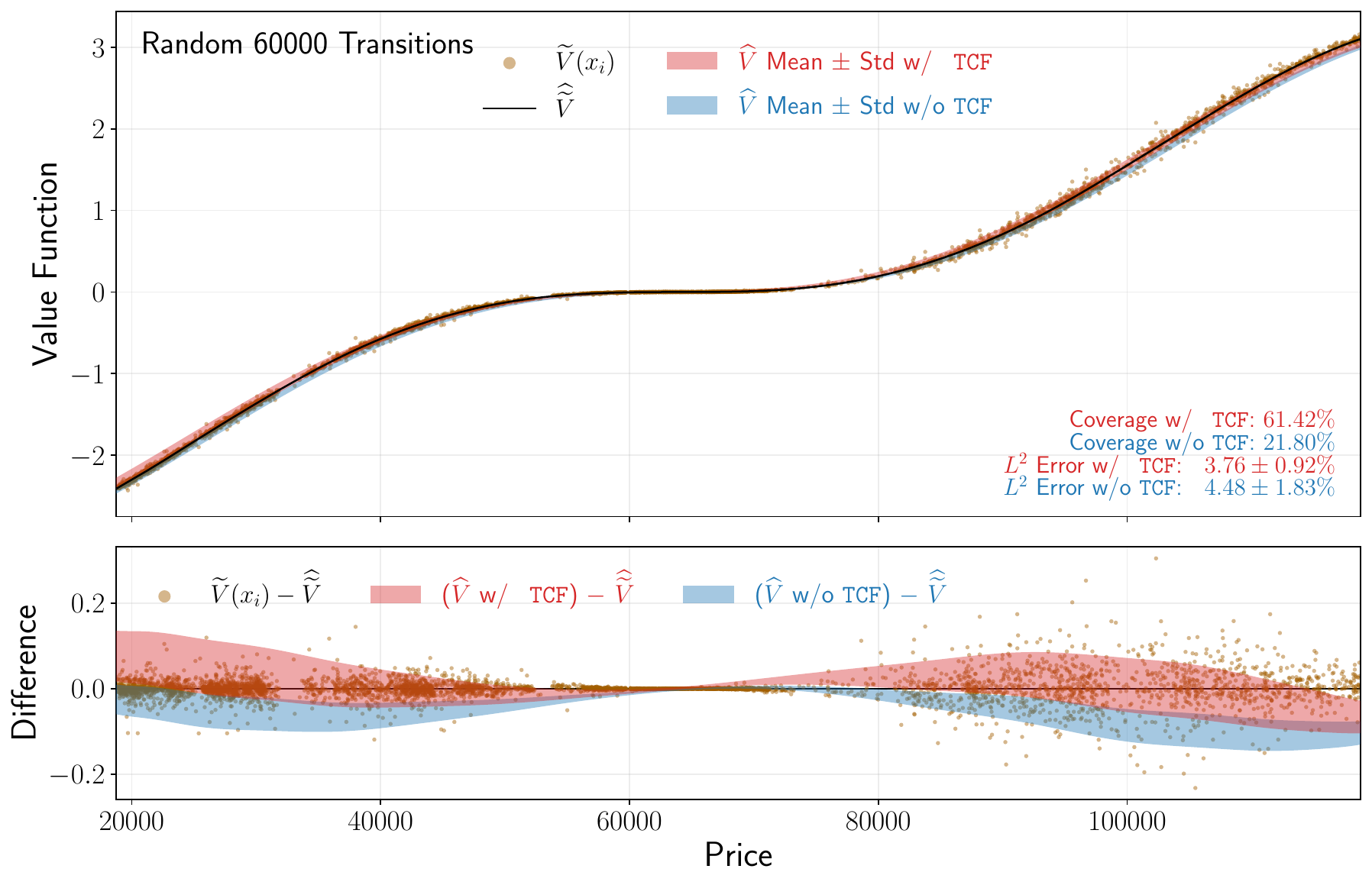}
    \caption{
        Real-data BTC policy-evaluation comparison.
        The orange dots show $5000$ samples from all empirical policy-evaluation values.
        The \texttt{TCF}-corrected learned generator tracks the empirical reference more closely than both the learned generator without \texttt{TCF} and classical reinforcement learning, i.e., directly fitting discounted empirical values without learning the L\'evy generator.
        The left panel shows that the approach remains usable with only the first $6000$ transitions, while CRL has large extrapolation errors outside the observed range.
        The right panel shows that using random $60000$ transitions improves accuracy and stability.
        In both panels, \texttt{TCF} improves both coverage and $L^{2}$ error relative to the policy evaluation without \texttt{TCF}.
    }
    \label{fig:real_data_advantages}
\end{figure}


\section{Conclusion}\label{sec:conclusion}
This work introduces a model-based continuous-time policy evaluation framework where the underlying stochastic dynamics incorporates both Brownian and L\'evy noise.
This extends traditional models that rely solely on Brownian dynamics, providing a more realistic representation of stochastic environments encountered in various real-world applications.
A key contribution is the development of an accurate and robust numerical method for recovering L\'evy dynamics, particularly in cases where heavy-tailed behavior is pronounced, such as when the fractional exponent $\alpha$ is small.
Additionally, this paper establishes a theoretical bound for policy evaluation errors based on the recovery error of coefficients in stochastic dynamics.
This indicates that the accuracy of policy evaluation depends on both the recovery error of the stochastic dynamics and the numerical error in solving the associated partial integro-differential equation (PIDE).
These findings contribute to a more reliable and mathematically rigorous foundation for reinforcement learning in complex stochastic systems.

Future research can build on this work in several directions.
A deeper analysis of coefficient recovery error from discrete-time trajectory data would provide further insights into the accuracy of the proposed approach.
Additionally, understanding the convergence of the iterative tail correction method remains an important theoretical challenge.
Developing an efficient numerical solver for the fractional Fokker--Planck equation with variable coefficients is another key objective, as it would improve the accuracy of stochastic dynamics recovery.
Moreover, systematically inferring the fractional exponent $\alpha$, especially when it is state-dependent, would make the approach more adaptable to real-world applications.
Finally, extending these techniques to higher-dimensional settings and broader real-world datasets will be crucial for validating their effectiveness in practical applications.


\section*{Acknowledgement}
Q.~Ye and X.~Tian were supported in part by NSF DMS-2240180 and the Alfred P. Sloan Fellowship.
Y.~Zhu was supported in part by NSF grant DMS-2529107.

The authors would like to express sincere gratitude to Boris Baeumer, Yuan Chen, Bin Dong, Qiang Du, Bo Li, Pearson Miller, Mingtao Xia, Yuming Paul Zhang, Yi Zhu for their insightful discussions.
The authors used GPT-5.5 only to check grammar and typos.
The tool did not change the meaning or content of the manuscript.
The authors assume responsibility for all content.


\bibliographystyle{abbrv}
\bibliography{bib/ref}

\begin{thebibliography}{10}

\bibitem{amos2021model}
B.~Amos, S.~Stanton, D.~Yarats, and A.~G. Wilson.
\newblock On the model-based stochastic value gradient for continuous reinforcement learning.
\newblock In {\em Learning for Dynamics and Control}, pages 6--20. PMLR, 2021.

\bibitem{barles2008second}
G.~Barles and C.~Imbert.
\newblock Second-order elliptic integro-differential equations: Viscosity solutions' theory revisited.
\newblock In {\em Annales de l'{{IHP}} Analyse Non Lin{\'e}aire}, volume~25, pages 567--585, 2008.

\bibitem{chen2021solving}
X.~Chen, L.~Yang, J.~Duan, and G.~E. Karniadakis.
\newblock Solving inverse stochastic problems from discrete particle observations using the fokker--planck equation and physics-informed neural networks.
\newblock {\em SIAM Journal on Scientific Computing}, 43(3):B811--B830, 2021.

\bibitem{pmlr-v235-chen24n}
Y.~Chen, M.~Goldstein, M.~Hua, M.~S. Albergo, N.~M. Boffi, and E.~Vanden-Eijnden.
\newblock Probabilistic forecasting with stochastic interpolants and föllmer processes.
\newblock In R.~Salakhutdinov, Z.~Kolter, K.~Heller, A.~Weller, N.~Oliver, J.~Scarlett, and F.~Berkenkamp, editors, {\em Proceedings of the 41st International Conference on Machine Learning}, volume 235 of {\em Proceedings of Machine Learning Research}, pages 6728--6756. PMLR, 21--27 Jul 2024.

\bibitem{chen2024learning}
Y.~Chen and D.~Xiu.
\newblock Learning stochastic dynamical system via flow map operator.
\newblock {\em Journal of Computational Physics}, 508:112984, 2024.

\bibitem{clauset2009power}
A.~Clauset, C.~R. Shalizi, and M.~E. Newman.
\newblock Power-law distributions in empirical data.
\newblock {\em SIAM review}, 51(4):661--703, 2009.

\bibitem{tankov2003financial}
R.~Cont and P.~Tankov.
\newblock {\em Financial modelling with jump processes}.
\newblock Chapman and Hall/CRC, 2003.

\bibitem{darcy2023one}
M.~Darcy, B.~Hamzi, G.~Livieri, H.~Owhadi, and P.~Tavallali.
\newblock One-shot learning of stochastic differential equations with data adapted kernels.
\newblock {\em Physica D: Nonlinear Phenomena}, 444:133583, 2023.

\bibitem{dietrich2023learning}
F.~Dietrich, A.~Makeev, G.~Kevrekidis, N.~Evangelou, T.~Bertalan, S.~Reich, and I.~G. Kevrekidis.
\newblock Learning effective stochastic differential equations from microscopic simulations: Linking stochastic numerics to deep learning.
\newblock {\em Chaos: An Interdisciplinary Journal of Nonlinear Science}, 33(2), 2023.

\bibitem{ding2022mean}
L.~Ding, W.~Li, S.~Osher, and W.~Yin.
\newblock A mean field game inverse problem.
\newblock {\em Journal of Scientific Computing}, 93(3):71, 2022.

\bibitem{doya2000reinforcement}
K.~Doya.
\newblock Reinforcement learning in continuous time and space.
\newblock {\em Neural computation}, 12(1):219--245, 2000.

\bibitem{fleming2006controlled}
W.~H. Fleming and H.~M. Soner.
\newblock {\em Controlled {M}arkov processes and viscosity solutions}, volume~25 of {\em Stochastic Modelling and Applied Probability}.
\newblock Springer, second edition, 2006.

\bibitem{gao2022data}
J.-X. Gao, Z.-Y. Wang, M.~Q. Zhang, M.-P. Qian, and D.-Q. Jiang.
\newblock A data-driven method to learn a jump diffusion process from aggregate biological gene expression data.
\newblock {\em Journal of Theoretical Biology}, 532:110923, 2022.

\bibitem{gao2016fokker}
T.~Gao, J.~Duan, and X.~Li.
\newblock Fokker--planck equations for stochastic dynamical systems with symmetric {L}{\'e}vy motions.
\newblock {\em Applied Mathematics and Computation}, 278:1--20, 2016.

\bibitem{gao2024reinforcement}
X.~Gao, L.~Li, and X.~Y. Zhou.
\newblock Reinforcement learning for jump-diffusions, with financial applications.
\newblock {\em Mathematical Finance}, 2024.

\bibitem{gobet2004nonparametric}
E.~Gobet, M.~Hoffmann, and M.~Rei.
\newblock Nonparametric estimation of scalar diffusions based on low frequency data1.
\newblock {\em The Annals of Statistics}, 32(5):2223--2253, 2004.

\bibitem{gu2023stationary}
Y.~Gu, J.~Harlim, S.~Liang, and H.~Yang.
\newblock Stationary density estimation of it{\^o} diffusions using deep learning.
\newblock {\em SIAM Journal on Numerical Analysis}, 61(1):45--82, 2023.

\bibitem{guo2024weak}
L.~Guo, L.~Lu, Z.~Zeng, P.~Hu, and Y.~Zhu.
\newblock Weak collocation regression for inferring stochastic dynamics with {L}{\'e}vy noise.
\newblock {\em arXiv preprint arXiv:2403.08292}, 2024.

\bibitem{Guo22}
X.~Guo, R.~Xu, and T.~Zariphopoulou.
\newblock Entropy regularization for mean field games with learning.
\newblock {\em Mathematics of Operations research}, 47(4):3239--3260, 2022.

\bibitem{han2023choquet}
X.~Han, R.~Wang, and X.~Y. Zhou.
\newblock Choquet regularization for continuous-time reinforcement learning.
\newblock {\em SIAM Journal on Control and Optimization}, 61(5):2777--2801, 2023.

\bibitem{hua2022personalized}
W.~Hua, H.~Mei, S.~Zohar, M.~Giral, and Y.~Xu.
\newblock Personalized dynamic treatment regimes in continuous time: a bayesian approach for optimizing clinical decisions with timing.
\newblock {\em Bayesian Analysis}, 17(3):849--878, 2022.

\bibitem{huang2023model}
H.~Huang, T.~Gao, P.~Li, J.~Guo, P.~Zhang, N.~Du, and J.~Duan.
\newblock Model-based reinforcement learning with non-{{{G}aussian}} environment dynamics and its application to portfolio optimization.
\newblock {\em Chaos: An Interdisciplinary Journal of Nonlinear Science}, 33(8), 2023.

\bibitem{humphries2010environmental}
N.~E. Humphries, N.~Queiroz, J.~R. Dyer, N.~G. Pade, M.~K. Musyl, K.~M. Schaefer, D.~W. Fuller, J.~M. Brunnschweiler, T.~K. Doyle, J.~D. Houghton, et~al.
\newblock Environmental context explains {{L{\'e}vy}} and {{Brownian}} movement patterns of marine predators.
\newblock {\em Nature}, 465(7301):1066--1069, 2010.

\bibitem{janner2019trust}
M.~Janner, J.~Fu, M.~Zhang, and S.~Levine.
\newblock When to trust your model: {{Model-based}} policy optimization.
\newblock {\em Advances in neural information processing systems}, 32, 2019.

\bibitem{jia2019neural}
J.~Jia and A.~R. Benson.
\newblock Neural jump stochastic differential equations.
\newblock {\em Advances in Neural Information Processing Systems}, 32, 2019.

\bibitem{jia2022policya}
Y.~Jia and X.~Y. Zhou.
\newblock Policy evaluation and temporal-difference learning in continuous time and space: A martingale approach.
\newblock {\em The Journal of Machine Learning Research}, 23(1):6918--6972, 2022.

\bibitem{jia2022policyb}
Y.~Jia and X.~Y. Zhou.
\newblock Policy gradient and actor-critic learning in continuous time and space: Theory and algorithms.
\newblock {\em The Journal of Machine Learning Research}, 23(1):12603--12652, 2022.

\bibitem{jia2023q}
Y.~Jia and X.~Y. Zhou.
\newblock q-learning in continuous time.
\newblock {\em Journal of Machine Learning Research}, 24(161):1--61, 2023.

\bibitem{karimi2023dynamic}
A.~Karimi, J.~Jin, J.~Luo, A.~R. Mahmood, M.~Jagersand, and S.~Tosatto.
\newblock Dynamic decision frequency with continuous options.
\newblock In {\em 2023 IEEE/RSJ International Conference on Intelligent Robots and Systems (IROS)}, pages 7545--7552. IEEE, 2023.

\bibitem{kingma2014adam}
D.~P. Kingma and J.~Ba.
\newblock Adam: {{A}} method for stochastic optimization.
\newblock {\em arXiv preprint arXiv:1412.6980}, 2014.

\bibitem{Kober2013}
J.~Kober, J.~A. Bagnell, and J.~Peters.
\newblock Reinforcement learning in robotics: A survey.
\newblock {\em The International Journal of Robotics Research}, 32(11):1238--1274, 2013.

\bibitem{levine2012continuous}
S.~Levine and V.~Koltun.
\newblock Continuous inverse optimal control with locally optimal examples.
\newblock In {\em Proceedings of the 29th International Conference on Machine Learning (ICML)}, pages 475--482. PMLR, 2012.

\bibitem{li2021data}
Y.~Li and J.~Duan.
\newblock A data-driven approach for discovering stochastic dynamical systems with non-{G}aussian {L}{\'e}vy noise.
\newblock {\em Physica D: Nonlinear Phenomena}, 417:132830, 2021.

\bibitem{li2022extracting}
Y.~Li, Y.~Lu, S.~Xu, and J.~Duan.
\newblock Extracting stochastic dynamical systems with $\alpha$-stable {L}{\'e}vy noise from data.
\newblock {\em Journal of Statistical Mechanics: Theory and Experiment}, 2022(2):023405, 2022.

\bibitem{liu2024training}
Y.~Liu, Y.~Chen, D.~Xiu, and G.~Zhang.
\newblock A training-free conditional diffusion model for learning stochastic dynamical systems.
\newblock {\em arXiv preprint arXiv:2410.03108}, 2024.

\bibitem{ma2021learning}
S.~Ma, S.~Liu, H.~Zha, and H.~Zhou.
\newblock Learning stochastic behaviour from aggregate data.
\newblock In {\em International Conference on Machine Learning}, pages 7258--7267. PMLR, 2021.

\bibitem{merton1975optimum}
R.~C. Merton.
\newblock Optimum consumption and portfolio rules in a continuous-time model.
\newblock In {\em Stochastic optimization models in finance}, pages 621--661. Elsevier, 1975.

\bibitem{MeKl00}
R.~Metzler and J.~Klafter.
\newblock The random walk's guide to anomalous diffusion: A fractional dynamics approach.
\newblock {\em Physics Reports}, 339(1):1--77, 2000.

\bibitem{Mnih2015}
V.~Mnih, K.~Kavukcuoglu, D.~Silver, A.~A. Rusu, J.~Veness, M.~G. Bellemare, A.~Graves, M.~Riedmiller, A.~K. Fidjeland, G.~Ostrovski, S.~Petersen, C.~Beattie, A.~Sadik, I.~Antonoglou, H.~King, D.~Kumaran, D.~Wierstra, S.~Legg, and D.~Hassabis.
\newblock Human-level control through deep reinforcement learning.
\newblock {\em Nature}, 518(7540):529--533, 2015.

\bibitem{Moody2001}
J.~Moody and M.~Saffell.
\newblock Learning to trade via direct reinforcement.
\newblock {\em IEEE Transactions on Neural Networks}, 12(4):875--889, 2001.

\bibitem{mou2021regularity}
C.~Mou and Y.~P. Zhang.
\newblock Regularity theory for second order integro-{{PDEs}}.
\newblock {\em Potential Analysis}, 54:387--407, 2021.

\bibitem{murphy2003optimal}
S.~A. Murphy.
\newblock Optimal dynamic treatment regimes.
\newblock {\em Journal of the Royal Statistical Society Series B: Statistical Methodology}, 65(2):331--355, 2003.

\bibitem{naik2024reward}
A.~Naik, Y.~Wan, M.~Tomar, and R.~S. Sutton.
\newblock Reward centering.
\newblock {\em arXiv preprint arXiv:2405.09999}, 2024.

\bibitem{nolan2020univariate}
J.~P. Nolan.
\newblock Univariate stable distributions.
\newblock {\em Springer Series in Operations Research and Financial Engineering}, 10:978--3, 2020.

\bibitem{ren2024policy}
K.~Ren, N.~Soedjak, and S.~Tong.
\newblock A policy iteration method for inverse mean field games.
\newblock {\em arXiv preprint arXiv:2409.06184}, 2024.

\bibitem{samorodnitsky1996stable}
G.~Samorodnitsky, M.~S. Taqqu, and R.~Linde.
\newblock Stable non-{G}aussian random processes: stochastic models with infinite variance.
\newblock {\em Bulletin of the London Mathematical Society}, 28(134):554--555, 1996.

\bibitem{schertzer2001fractional}
D.~Schertzer, M.~Larchev{\^e}que, J.~Duan, V.~Yanovsky, and S.~Lovejoy.
\newblock Fractional fokker--planck equation for nonlinear stochastic differential equations driven by non-{G}aussian {L}{\'e}vy stable noises.
\newblock {\em Journal of Mathematical Physics}, 42(1):200--212, 2001.

\bibitem{sciarretta2004optimal}
A.~Sciarretta, M.~Back, and L.~Guzzella.
\newblock Optimal control of parallel hybrid electric vehicles.
\newblock {\em IEEE Transactions on control systems technology}, 12(3):352--363, 2004.

\bibitem{siciliano1999robot}
B.~Siciliano and L.~Villani.
\newblock {\em Robot force control}.
\newblock Springer Science \& Business Media, 1999.

\bibitem{Silver2016}
D.~Silver, A.~Huang, C.~J. Maddison, A.~Guez, L.~Sifre, G.~V.~D. Driessche, J.~Schrittwieser, I.~Antonoglou, V.~Panneershelvam, M.~Lanctot, S.~Dieleman, D.~Grewe, J.~Nham, N.~Kalchbrenner, I.~Sutskever, T.~Lillicrap, M.~Leach, K.~Kavukcuoglu, T.~Graepel, and D.~Hassabis.
\newblock Mastering the game of go with deep neural networks and tree search.
\newblock {\em Nature}, 529(7587):484--489, 2016.

\bibitem{sowerby2022designing}
H.~Sowerby, Z.~Zhou, and M.~L. Littman.
\newblock Designing rewards for fast learning.
\newblock {\em arXiv preprint arXiv:2205.15400}, 2022.

\bibitem{tang2022exploratory}
W.~Tang, Y.~P. Zhang, and X.~Y. Zhou.
\newblock Exploratory hjb equations and their convergence.
\newblock {\em SIAM Journal on Control and Optimization}, 60(6):3191--3216, 2022.

\bibitem{wang2020reinforcement}
H.~Wang, T.~Zariphopoulou, and X.~Y. Zhou.
\newblock Reinforcement learning in continuous time and space: A stochastic control approach.
\newblock {\em J. Mach. Learn. Res.}, 21:1--34, 2020.

\bibitem{xia2024efficient}
M.~Xia, X.~Li, Q.~Shen, and T.~Chou.
\newblock An efficient wasserstein-distance approach for reconstructing jump-diffusion processes using parameterized neural networks.
\newblock {\em Machine Learning: Science and Technology}, 5(4):045052, 2024.

\bibitem{xu2024modeling}
Z.~Xu, Y.~Chen, Q.~Chen, and D.~Xiu.
\newblock Modeling unknown stochastic dynamical system via autoencoder.
\newblock {\em Journal of Machine Learning for Modeling and Computing}, 5(3), 2024.

\bibitem{yang2022generative}
L.~Yang, C.~Daskalakis, and G.~E. Karniadakis.
\newblock Generative ensemble regression: Learning particle dynamics from observations of ensembles with physics-informed deep generative models.
\newblock {\em SIAM Journal on Scientific Computing}, 44(1):B80--B99, 2022.

\bibitem{yang2023neural}
L.~Yang, T.~Gao, Y.~Lu, J.~Duan, and T.~Liu.
\newblock Neural network stochastic differential equation models with applications to financial data forecasting.
\newblock {\em Applied Mathematical Modelling}, 115:279--299, 2023.

\bibitem{ye2026fast}
Q.~Ye, X.~Tian, and D.~Wang.
\newblock A fast and accurate solver for the fractional fokker--planck equation with dirac-delta initial conditions.
\newblock {\em SIAM Journal on Scientific Computing}, 48(2):A1050--A1074, 2026.

\bibitem{zaslavsky2002chaos}
G.~M. Zaslavsky.
\newblock Chaos, fractional kinetics, and anomalous transport.
\newblock {\em Physics reports}, 371(6):461--580, 2002.

\bibitem{zhu2024dyngma}
A.~Zhu and Q.~Li.
\newblock Dyngma: a robust approach for learning stochastic differential equations from data.
\newblock {\em Journal of Computational Physics}, 513:113200, 2024.

\bibitem{zhu2024phibe}
Y.~Zhu.
\newblock {PhiBe}: {A} {PDE}-based {B}ellman equation for continuous time policy evaluation.
\newblock {\em arXiv preprint arXiv:2405.12535}, 2024.

\bibitem{zhu2023learning}
Y.~Zhu, Y.-H. Tang, and C.~Kim.
\newblock Learning stochastic dynamics with statistics-informed neural network.
\newblock {\em Journal of Computational Physics}, 474:111819, 2023.

\bibitem{Ziegler2019}
D.~M. Ziegler, N.~Stiennon, J.~Wu, T.~Brown, A.~Radford, D.~Amodei, and P.~F. Christiano.
\newblock Fine-tuning language models from human preferences.
\newblock {\em arXiv preprint arXiv:1909.08593}, 2019.

\end{thebibliography}


\appendix


\section{Selecting the Cutting Threshold}\label{appendix:CT_selection}

This appendix provides a practical rule for estimating the size of the cutting threshold \texttt{CT} used in \Cref{eq:tail_pool}.
The goal is not to determine an exact threshold, but to choose a scale that separates the central part of the one-step increment distribution from the tail part.

We first estimate an average value of the fractional diffusion coefficient.
For example, after a preliminary coefficient recovery step, one can take
\begin{equation*}
    \overline{\diffusionFractional}
    =
    \frac{1}{|P_{\textnormal{main}}|}
    \sum_{\left ( x_{\textnormal{current}}, x_{\textnormal{next}} \right ) \in P_{\textnormal{main}}}
    \widehat{\diffusionFractional}(x_{\textnormal{current}}).
\end{equation*}
For a Fourier basis, this average is close to the constant Fourier mode when the observed states cover the domain sufficiently well.

To estimate the threshold scale, we use the one-dimensional constant-coefficient density in \Cref{eq:FFPE_constant} and simplify it by setting
\begin{equation*}
    x_{0} = 0, \quad
    \drift = 0, \quad
    \diffusionOrdinary = 0, \quad
    \diffusionFractional = \overline{\diffusionFractional}.
\end{equation*}
For one time step, let
\begin{equation*}
    \tau = \overline{\diffusionFractional} \Delta t.
\end{equation*}
Then the reduced density is
\begin{equation}\label{eq:CT_reduced_density}
    p(x; \alpha, \tau)
    =
    \frac{1}{\pi}
    \int_{0}^{\infty}
    \cos(x r) \exp \left ( - \tau r^{2 \alpha} \right ) \, d r.
\end{equation}
For a prescribed central mass $R \in (0, 1)$, define $C(R; \alpha, \tau)$ by
\begin{equation}\label{eq:CT_central_mass}
    \int_{- C(R; \alpha, \tau)}^{C(R; \alpha, \tau)}
    p(x; \alpha, \tau) \, d x
    =
    R.
\end{equation}
The density in \Cref{eq:CT_reduced_density} is a symmetric $2 \alpha$-stable density with scale $\tau^{1 / (2 \alpha)}$.
If $F_{2 \alpha}$ denotes the cumulative distribution function of the unit-scale symmetric $2 \alpha$-stable law with characteristic function $\exp \left ( - |k|^{2 \alpha} \right )$, then
\begin{equation}\label{eq:CT_formula}
    C(R; \alpha, \overline{\diffusionFractional} \Delta t)
    =
    \left ( \overline{\diffusionFractional} \Delta t \right )^{1 / (2 \alpha)}
    F_{2 \alpha}^{-1}
    \left ( \frac{1 + R}{2} \right ).
\end{equation}
Therefore, a practical choice is
\begin{equation*}
    \texttt{CT}
    \approx
    \left ( \overline{\diffusionFractional} \Delta t \right )^{1 / (2 \alpha)}
    F_{2 \alpha}^{-1}
    \left ( \frac{1 + R}{2} \right ).
\end{equation*}
With this choice, the interval $[- \texttt{CT}, \texttt{CT}]$ contains approximately the central $R$ fraction of the one-step increments under the reduced stable model.
Equivalently, about $1 - R$ of the reduced model lies in the tail region.

We recommend taking $R \geq 0.9$ so that only relatively large jumps are included in $P_{\textnormal{tail}}$.
In most of our practices, we take the more conservative value $R = 0.98$ when estimating the scale of \texttt{CT}.

\section{Full Version of \texorpdfstring{\Cref{alg:Adam_with_tail_correction_concise}}{Algorithm 3.1}}\label{appendix:algorithm}
\begin{algorithm}[htbp]
    \caption{Robust Maximum Likelihood Estimation}\label{alg:Adam_with_tail_correction}
    \begin{algorithmic}[1]
        \Require All the trajectories $P = \left \{ x_{j \Delta t}^{(i)} \right \}_{j = 0, i = 1}^{j = J, i = I}$, the cutting threshold \texttt{CT} and the tail removal threshold \texttt{TRT}
        \Ensure $\theta$ representing the estimated coefficients

        \State $\eta, \beta_{1}, \beta_{2}, \epsilon \gets \textit{1e-2}, 0.9, 0.999, \textit{1e-8}$
            \Comment{Adjustable hyperparameters for Adam}
        \State $I, B \gets 40000, 100$
            \Comment{Adjustable step limit and batch size}
        \State $\boldsymbol{m}, \boldsymbol{v}, i, \texttt{TCF} \gets \boldsymbol{0}, \boldsymbol{0}, 0, 0$
            \Comment{Initialization}
        \State Obtain $\Delta t$ from data and randomly initialize $\theta$ to make sure $\diffusionOrdinary, \diffusionFractional \geq 0$
        \State $P_{\textnormal{main}} \gets \left \{ \left ( x_{j \Delta t}^{(i)}, x_{(j + 1) \Delta t}^{(i)} \right ) \right \}_{j = 0, i = 1}^{j = J - 1, i = I}$
            \Comment{The main sample pool}
        \State $\mu \gets \text{median} \left ( \Delta x \Big | \left ( x_{\textnormal{current}}, x_{\textnormal{next}} \right ) \in P_{\textnormal{main}} \right )$
            \Comment{$\Delta x := x_{\textnormal{next}} - x_{\textnormal{current}}$}
        \State $P_{\textnormal{main}} \gets \left \{ \left ( x_{\textnormal{current}}, x_{\textnormal{next}} \right ) \in P_{\textnormal{main}} \bigg | \left | \Delta x - \mu \right | < \texttt{TRT} \right \}$
        \State $P_{\textnormal{tail}} \gets \left \{ \left ( x_{\textnormal{current}}, x_{\textnormal{next}} \right ) \in P_{\textnormal{main}} \bigg | \left | \Delta x - \mu \right | > \texttt{CT} \right \}$
            \Comment{The tail part sample pool, if its size is too small, reduce \texttt{CT} by half until its size is large enough (at least $\geq B$)}
        \While{$i < I$ \textbf{and} the moving average of $\theta$ is not converged}
            \State $i \gets i + 1$
            \If{a random number $\sim \text{Uniform}(0, 1)$ exceeds \texttt{TCF}}
                \State $\ell(\theta)$ will be computed by $B$ samples from $P_{\textnormal{main}}$
                \Comment{Normal sampling}
            \Else
                \State $\ell(\theta)$ will be computed by $B$ samples from $P_{\textnormal{tail}}$
                \Comment{Tail sampling}
            \EndIf
            \State $\boldsymbol{g} \gets$ the gradient of $- \ell(\theta)$
                \Comment{\Cref{alg:FFPE_direct_gradient} or \Cref{alg:FFPE_finite_difference_gradient} or their mixture}
            \State $\boldsymbol{m}, \boldsymbol{v} \gets \beta_{1} \boldsymbol{m} + (1 - \beta_{1}) \boldsymbol{g}, \beta_{2} \boldsymbol{v} + (1 - \beta_{2}) \boldsymbol{g} \odot \boldsymbol{g}$
                \Comment{Update moments}
            \State $\widehat{\boldsymbol{m}}, \widehat{\boldsymbol{v}} \gets \boldsymbol{m} / (1 - \beta_{1}^{i}), \boldsymbol{v} / (1 - \beta_{2}^{i})$
                \Comment{Correct the bias}
            \State $\theta \gets \theta - \eta \widehat{\boldsymbol{m}} \odot (\sqrt{\widehat{\boldsymbol{v}}} + \epsilon)^{- 1}$
                \Comment{Update $\theta$}
            \State $\texttt{TCF} \gets $ \Cref{alg:TCF_computation}($\theta$, $\Delta t$, \texttt{CT}, $\mu$, $|P_{\textnormal{tail}}| / |P_{\textnormal{main}}|$, $\alpha$) \textbf{if} $i \geq 4000$
                \Comment{Update \texttt{TCF}}
        \EndWhile
            \Comment{Concise version can be found at \Cref{alg:Adam_with_tail_correction_concise}}
        \State \Return the moving average of $\theta$
    \end{algorithmic}
\end{algorithm}

\section{Proof of \texorpdfstring{\Cref{thm:error_bound}}{Theorem 3.1}}\label{sec:proof_of_theorem}
In this section, we provide the proof of \Cref{thm:error_bound}. 
The central idea relies on the regularity results for viscosity solutions of elliptic integro-differential equations.
Before we prove the theorem, we first present several important results related to the well-posedness and regularity of elliptic integro-differential equations. See \cite{barles2008second,mou2021regularity} for more details.

Denote $F(x, u(x), \nabla u(x), \nabla^{2} u(x), - (- \Delta)^{\alpha} u(x)) = \beta u (x) - r(x) - \drift(x) \cdot \nabla u (x) - \diffusionOrdinary(x) : \nabla^{2} u (x) + \diffusionFractional(x) (- \Delta)^{\alpha} u(x)$. 
Then \Cref{eq:PDE_value_function} is equivalent to solving:
\begin{equation}
\label{eq:PDE_value_function_2}
    F(x, u(x), \nabla u(x), \nabla^{2} u(x), - (- \Delta)^{\alpha} u(x))
    = 0.
\end{equation}
For the remainder of this section, we assume that $r(x), b(x), \diffusionOrdinary(x), \diffusionFractional(x)$ are uniformly continuous and periodic functions defined on $\mathbb{R}^{d}$, with periodicity defined on the unit cell $Q = (0, 2 \pi]^{d}$.
We use the standard definition of viscosity solutions for elliptic equations, which is briefly quoted below.
\begin{definition}
\label{defn:viscositysolu}
    A bounded upper semicontinuous (resp. lower semicontinuous) function $u: \mathbb{R}^{d} \to \mathbb{R}$ is a viscosity subsolution (resp. supersolution) of \Cref{eq:PDE_value_function_2} if, for any test function
    $\phi \in C_{b}(\mathbb{R}^{d}) \cap C^{2}(\mathbb{R}^{d})$, if $x$ is a global maximum (resp. minimum) point of $u - \phi$, then
    \begin{equation*}
        F(x, u(x), \nabla u(x), \nabla^{2} u(x), - (- \Delta)^{\alpha} u(x))
        \leq 0
    \end{equation*}
    (resp. $F(x, u(x), \nabla u(x), \nabla^{2} u(x), -(- \Delta)^{\alpha} u(x)) \geq 0$).
    In addition, $u$ is called a viscosity solution if it is both a viscosity subsolution and supersolution.
\end{definition}

The following is a key comparison principle, as a result of \cite[Theorem 3]{barles2008second}.
\begin{lemma}
\label{lem:comparison_principle}
    If $u$ is a viscosity subsolution and $v$ is a viscosity supersolution, as defined in \Cref{defn:viscositysolu}, then $u \leq v$ in $\mathbb{R}^{d}$.
\end{lemma}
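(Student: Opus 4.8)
The plan is to obtain \Cref{lem:comparison_principle} as a direct instance of the general nonlocal comparison theorem \cite[Theorem 3]{barles2008second}, so the work is entirely in verifying that the operator $F$ fits their hypotheses. First I would recast \Cref{eq:PDE_value_function_2} in L\'evy--It\^o form by identifying the nonlocal part as the L\'evy-type operator
\[
    \mathcal{I}[x, u] = - (- \Delta)^{\alpha} u(x) = c_{d, \alpha} \int_{\mathbb{R}^{d}} \left ( u(x + z) - u(x) - \mathbf{1}_{|z| \le 1} \nabla u(x) \cdot z \right ) \frac{d z}{|z|^{d + 2 \alpha}},
\]
so that $F(x, s, p, X, \ell) = \beta s - r(x) - \drift(x) \cdot p - \diffusionOrdinary(x) : X - \diffusionFractional(x) \, \ell$ with $\ell = \mathcal{I}[x, u]$. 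The associated L\'evy measure $\nu(d z) = c_{d, \alpha} |z|^{-d - 2 \alpha} d z$ satisfies the admissibility requirement $\int_{\mathbb{R}^{d}} \min(|z|^{2}, 1) \, \nu(d z) < \infty$ precisely because $\alpha \in (0, 1)$, which is the integrability hypothesis imposed on the nonlocal term in \cite{barles2008second}.

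Next I would check the two degenerate-ellipticity (monotonicity) conditions together with the strict properness. Since $\diffusionOrdinary(x)$ is uniformly positive definite, the map $X \mapsto - \diffusionOrdinary(x) : X$ is nonincreasing, so $F$ is nonincreasing in the Hessian slot; since $\diffusionFractional(x) \ge \gamma > 0$, the map $\ell \mapsto - \diffusionFractional(x) \ell$ is nonincreasing, so $F$ is nonincreasing in the nonlocal slot. Crucially, $\partial_{s} F = \beta > 0$, so $F$ is strictly increasing in its zeroth-order argument; this strict properness, furnished by the discount factor $\beta$, is exactly what upgrades the conclusion from comparison "up to additive constants" to the sharp statement $u \le v$ on all of $\mathbb{R}^{d}$.

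Then I would verify the structure condition through doubling of variables. Because $r, \drift, \diffusionOrdinary, \diffusionFractional$ are periodic, hence bounded and uniformly continuous, and $u$ is bounded upper semicontinuous while $v$ is bounded lower semicontinuous, the function $u - v$ is bounded and $Q$-periodic, so its supremum is attained. Arguing by contradiction, suppose $\sup (u - v) > 0$ and maximize $\Phi_{\epsilon}(x, y) = u(x) - v(y) - \tfrac{1}{2 \epsilon} |x - y|^{2}$ over $\overline{Q} \times \overline{Q}$. Standard estimates yield maximizers $(x_{\epsilon}, y_{\epsilon})$ with $|x_{\epsilon} - y_{\epsilon}|^{2} / \epsilon \to 0$ and $u(x_{\epsilon}) - v(y_{\epsilon}) \to \sup (u - v)$. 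The nonlocal Jensen--Ishii lemma of \cite{barles2008second} then supplies test data $(p_{\epsilon}, X, \mathcal{I}^{1})$ for $u$ and $(p_{\epsilon}, Y, \mathcal{I}^{2})$ for $v$, where $p_{\epsilon} = (x_{\epsilon} - y_{\epsilon}) / \epsilon$, the matrices obey the Jensen--Ishii inequality (in particular $X \le Y$), and the nonlocal remainders satisfy $\mathcal{I}^{1} - \mathcal{I}^{2} \to 0$ as $\epsilon \to 0$. Subtracting the subsolution and supersolution inequalities and invoking strict monotonicity gives
\[
    \beta \, \bigl ( u(x_{\epsilon}) - v(y_{\epsilon}) \bigr ) \le \bigl ( r(x_{\epsilon}) - r(y_{\epsilon}) \bigr ) + \bigl ( \drift(x_{\epsilon}) - \drift(y_{\epsilon}) \bigr ) \cdot p_{\epsilon} + \bigl ( \diffusionOrdinary : X - \diffusionOrdinary : Y \bigr ) + \bigl ( \diffusionFractional(x_{\epsilon}) \mathcal{I}^{1} - \diffusionFractional(y_{\epsilon}) \mathcal{I}^{2} \bigr ),
\]
and each right-hand term is controlled by a modulus of continuity of the coefficients evaluated at $|x_{\epsilon} - y_{\epsilon}|$ together with the quantities furnished by the matrix inequality; letting $\epsilon \to 0$ forces the right side to $0$ while the left side tends to $\beta \sup (u - v) > 0$, the desired contradiction.

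The hard part will be the nonlocal estimate, namely controlling $\diffusionFractional(x_{\epsilon}) \mathcal{I}^{1} - \diffusionFractional(y_{\epsilon}) \mathcal{I}^{2}$, since $\mathcal{I}^{j}$ need not be bounded uniformly. I would split it as $\diffusionFractional(x_{\epsilon}) (\mathcal{I}^{1} - \mathcal{I}^{2}) + (\diffusionFractional(x_{\epsilon}) - \diffusionFractional(y_{\epsilon})) \mathcal{I}^{2}$. The first piece vanishes by the nonlocal Jensen--Ishii lemma. For the second piece I would use the standard near/far splitting of the singular integral at a radius $\delta$: the part $|z| \le \delta$ is absorbed into the second-order term via $\int_{|z| \le \delta} |z|^{2} \, \nu(d z) \to 0$, while the tail $|z| > \delta$ is bounded using $\| u \|_{\infty}, \| v \|_{\infty}$ and $\int_{|z| > \delta} \nu(d z) < \infty$; multiplying by the vanishing factor $|\diffusionFractional(x_{\epsilon}) - \diffusionFractional(y_{\epsilon})| \to 0$ then suppresses this contribution in the limit. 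This splitting, together with the uniform continuity of the coefficients, is precisely what the hypotheses of \cite[Theorem 3]{barles2008second} are designed to accommodate, so the verification reduces \Cref{lem:comparison_principle} to their theorem.
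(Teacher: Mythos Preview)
Your proposal is correct and takes exactly the same route as the paper: the paper's entire ``proof'' is the one-line statement that \Cref{lem:comparison_principle} is a consequence of \cite[Theorem~3]{barles2008second}, and your write-up simply fills in the hypothesis verification that the paper leaves implicit. One minor caveat: you assert that $u-v$ is $Q$-periodic, but the lemma as stated only assumes $u,v$ bounded; this is not needed, since the strict properness $\beta>0$ already handles the unbounded-domain issue in the standard doubling argument (or one can add the usual $\eta(|x|^2+|y|^2)$ penalization), and in any case this is absorbed into the hypotheses of the cited theorem.
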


The following regularity result is a restatement of \cite[Theorems 4.1 and 5.1]{mou2021regularity}.
In the following theorem, $B_{1}$ and $B_{2}$ denote the open balls of radius $1$ and $2$, respectively.
However, we note that the theorem holds true, with constants adjusted, if $B_{1}$ and $B_{2}$ are any two open balls with $B_{2}$ containing $B_{1}$.
\begin{lemma}
\label{lem:Mou_Zhang}
    Let $u$ be a viscosity solution as defined in \Cref{defn:viscositysolu}.
    Then there exist constants $\gamma \in (0, 1)$ and $C_{1} > 0$ such that
    \begin{equation*}
        \| u \|_{C^{1, \gamma}(B_{1})}
        \leq C_{1} \left( \| u \|_{L^{\infty}(\mathbb{R}^{d})} + \| r \|_{L^{\infty}(B_{2})} \right).
    \end{equation*}
    In addition, assume that $r(x), b(x), \diffusionOrdinary(x), \diffusionFractional(x)$ are $\gamma$-H\"older where $\gamma \in (0, 1)$ is a sufficiently small universal constant. If $u \in C^{2}(B_{2}) \cap C^{0, \gamma}(\mathbb{R}^{d})$ is a classical solution in $B_2$, then there exists $C_{2} > 0$ such that 
    \begin{equation*}
        \| u \|_{C^{2, \gamma}(B_{1})}
        \leq C_{2} \left( \| u \|_{C^{0, \gamma}(\mathbb{R}^{d})} + \| r \|_{C^{0, \gamma}(B_{2})} \right).
    \end{equation*}
\end{lemma}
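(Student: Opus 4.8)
The plan is to verify that the operator $F$ in \Cref{eq:PDE_value_function_2} falls within the structural class covered by the interior regularity theory of \cite{mou2021regularity}, and then to invoke \cite[Theorem 4.1]{mou2021regularity} for the $C^{1,\gamma}$ bound and \cite[Theorem 5.1]{mou2021regularity} for the $C^{2,\gamma}$ bound. Accordingly, the bulk of the argument is a hypothesis-matching exercise rather than a new estimate. First I would decompose $F$ into its principal part and lower-order terms: the local second-order operator $-\diffusionOrdinary(x):\nabla^{2}u$, the nonlocal operator $\diffusionFractional(x)(-\Delta)^{\alpha}u$ of order $2\alpha<2$, and the lower-order contributions $\beta u-\drift(x)\cdot\nabla u-r(x)$.

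Next I would check the degenerate-ellipticity (properness) and uniform-ellipticity structure conditions required by \Cref{defn:viscositysolu} and the cited theorems. Writing the nonlocal slot as $\ell=-(-\Delta)^{\alpha}u$, one verifies the correct monotonicities: $\partial_{u}F=\beta>0$, the map $F$ is nonincreasing in the Hessian argument because $\diffusionOrdinary$ is positive definite (so $X\ge Y$ forces $-\diffusionOrdinary:X\le-\diffusionOrdinary:Y$), and $\partial_{\ell}F=-\diffusionFractional<0$ since $\diffusionFractional\ge\gamma>0$. The standing assumption that $\diffusionOrdinary$ is uniformly positive definite and bounded yields ellipticity constants $\lambda I\le\diffusionOrdinary\le\Lambda I$, so the local part is uniformly elliptic; the factor $\diffusionFractional(x)$ in front of $(-\Delta)^{\alpha}$ is bounded above and bounded below by $\gamma>0$, so the associated kernel $\diffusionFractional(x)\,|z|^{-d-2\alpha}$ lies in the admissible nonlocal class used in \cite{mou2021regularity}. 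Because the fractional term has strictly lower differential order than the second-order term, the combined operator is sandwiched between the Pucci extremal operators of that admissible class, with ellipticity constants uniform in $\alpha$ thanks to the nondegeneracy of $\diffusionOrdinary$.

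With the structural conditions in place, the $C^{1,\gamma}$ estimate follows from the Krylov--Safonov-type result \cite[Theorem 4.1]{mou2021regularity}, which requires only boundedness and uniform continuity of the coefficients --- the standing hypothesis of this section --- together with $u\in L^{\infty}(\mathbb{R}^{d})$; the source $r$ enters the right-hand side through the $-r(x)$ term, producing the $\|r\|_{L^{\infty}(B_{2})}$ contribution. For the second estimate I would upgrade to the Schauder regime: assuming $r,\drift,\diffusionOrdinary,\diffusionFractional$ are $\gamma$-H\"older with $\gamma$ a sufficiently small universal constant, and given the hypothesis that $u\in C^{2}(B_{2})\cap C^{0,\gamma}(\mathbb{R}^{d})$ is a classical solution in $B_{2}$, \cite[Theorem 5.1]{mou2021regularity} delivers the $C^{2,\gamma}(B_{1})$ bound with the stated right-hand side. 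The concentric-ball normalization $B_{1}\subset B_{2}$ matches the cited statements, and the remark that arbitrary nested balls work with adjusted constants follows from the translation and scaling invariance of the estimates.

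The main obstacle is the verification in the second paragraph: one must confirm that the mixed local--nonlocal operator genuinely fits the admissible framework of \cite{mou2021regularity}, in particular that (i) the state-dependent multiplier $\diffusionFractional(x)$ preserves the kernel regularity and uniform non-degeneracy required by their class, and (ii) the uniform ellipticity does not degenerate as $\alpha$ approaches $0$ or $1$ --- a point handled by letting the uniformly positive-definite second-order term supply the dominant ellipticity while treating the fractional term as an admissible lower-order perturbation. A secondary subtlety to flag is that the smallness of the universal exponent $\gamma$ in the $C^{2,\gamma}$ estimate is dictated by the Schauder theory of the cited work, so the value of $\gamma$ appearing in the hypotheses and in the conclusion must be taken consistently with that constraint.
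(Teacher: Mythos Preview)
Your proposal is correct and matches the paper's approach: the lemma is not proved from scratch but is obtained by invoking \cite[Theorems 4.1 and 5.1]{mou2021regularity} after checking that the mixed local--nonlocal operator $F$ fits their structural framework. The paper simply states the lemma as a restatement of those theorems and adds a one-line remark that $\gamma$-H\"older continuity of $\diffusionFractional$ suffices (in place of the slightly different kernel assumption in \cite{mou2021regularity}); your more explicit hypothesis-matching and your flag about the $\diffusionFractional(x)$ regularity cover exactly this point.
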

We note that \cite{mou2021regularity} made a slightly different regularity assumption on the nonlocal integral term. Following the proof of \cite[Theorem 5.1]{mou2021regularity}, we see that $\gamma$-H\"older continuity on $\diffusionFractional(x)$ is sufficient for the second statement in the above lemma to hold. 

We now establish two additional results that are essential for the proof of the theorem.
\begin{lemma}
\label{lem:regularity}
    Let $r(x), b(x), \diffusionOrdinary(x), \diffusionFractional(x)$ be uniformly continuous and periodic functions defined on $\mathbb{R}^{d}$, with periodicity defined on the unit cell $Q = (0, 2 \pi]^{d}$.
    Then \Cref{eq:PDE_value_function} has a unique viscosity solution $V$, where $V$ is a periodic function with the same periodicity $Q$ that satisfies
    \begin{equation}
    \label{eq:appendixB_regularity_1}
        \| V \|_{C^{1, \gamma}(\mathbb{R}^{d})}
        \leq C_{1} \| r \|_{L^{\infty}(\mathbb{R}^{d})},
    \end{equation}
    for some $\gamma\in (0, 1)$ and $C_{1} > 0$.
    In addition, suppose $r(x), b(x), \diffusionOrdinary(x), \diffusionFractional(x)$ are $\gamma$-H\"older continuous for a sufficiently small universal constant $\gamma$, then there exists $C_{2} > 0$ such that
    \begin{equation}
    \label{eq:appendixB_regularity_2}
        \| V \|_{C^{2, \gamma}(\mathbb{R}^{d})}
        \leq C_{2} \| r \|_{C^{0, \gamma}(\mathbb{R}^{d})}. 
    \end{equation}
\end{lemma}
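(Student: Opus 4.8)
The plan is to obtain existence and uniqueness of the viscosity solution by Perron's method together with the comparison principle \Cref{lem:comparison_principle}, to derive the $L^\infty$ bound and periodicity from comparison with constants and translation invariance, and finally to globalize the interior estimates of \Cref{lem:Mou_Zhang} using periodicity.

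First I would note that, because $\beta > 0$ and constants are annihilated by $\nabla$, $\nabla^2$, and $(-\Delta)^\alpha$, the constant functions $\pm \|r\|_{L^\infty(\mathbb{R}^d)}/\beta$ are respectively a classical supersolution and subsolution of \Cref{eq:PDE_value_function_2}. Perron's method for nonlocal elliptic equations — available once \Cref{lem:comparison_principle} is in hand — then produces a viscosity solution $V$ trapped between these two constants, and \Cref{lem:comparison_principle} gives uniqueness; in particular $\|V\|_{L^\infty(\mathbb{R}^d)} \le \|r\|_{L^\infty(\mathbb{R}^d)}/\beta$. For periodicity, fix $\tau \in 2\pi\mathbb{Z}^d$: since $r, \drift, \diffusionOrdinary, \diffusionFractional$ are $Q$-periodic and $(-\Delta)^\alpha$ commutes with translations, $V(\cdot + \tau)$ is again a viscosity solution of \Cref{eq:PDE_value_function_2}, so uniqueness forces $V(\cdot+\tau) = V$.

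Next I would prove \Cref{eq:appendixB_regularity_1} by covering the closed cell $\overline Q$ by finitely many unit balls $B_1^{(i)}$, each sitting inside a concentric ball $B_2^{(i)}$, applying the first estimate of \Cref{lem:Mou_Zhang} on each pair, and absorbing $\|V\|_{L^\infty(\mathbb{R}^d)}$ via the bound from the previous step; periodicity then upgrades the union of interior bounds to a bound on all of $\mathbb{R}^d$, with constant depending only on $\beta$ and the ellipticity and H\"older data. For \Cref{eq:appendixB_regularity_2}, assume the coefficients are $\gamma$-H\"older for the small universal $\gamma$ of \Cref{lem:Mou_Zhang}. The first task here is to show that $V$ is actually a classical $C^2$ solution, so that the second estimate of \Cref{lem:Mou_Zhang} applies. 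This is a bootstrap argument: from $V \in C^{1,\gamma}$ the nonlocal term $(-\Delta)^\alpha V$ gains H\"older regularity, the equation then expresses the local second-order part $\diffusionOrdinary : \nabla^2 V$ in terms of a H\"older right-hand side, and interior Schauder estimates for uniformly elliptic operators with H\"older coefficients yield $V \in C^{2,\gamma}_{\mathrm{loc}}$ (iterating the gain in regularity when $2\alpha$ is close to $2$, or invoking the interior regularity theory of \cite{mou2021regularity} directly). With $V$ classical I apply the second estimate of \Cref{lem:Mou_Zhang} on the ball pairs, use $\|V\|_{C^{0,\gamma}(\mathbb{R}^d)} \le \|V\|_{C^{1,\gamma}(\mathbb{R}^d)} \le C_1\|r\|_{L^\infty(\mathbb{R}^d)} \le C_1\|r\|_{C^{0,\gamma}(\mathbb{R}^d)}$ and periodicity, and conclude.

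The main obstacle is this last step: establishing that the viscosity solution is genuinely $C^2$, so that the classical-solution hypothesis of \Cref{lem:Mou_Zhang} is met, which requires carefully matching the order $2\alpha$ of the fractional term against the H\"older exponent produced at each stage of the bootstrap. Once this regularity upgrade is secured, the remaining ingredients — Perron's method, the translation argument for periodicity, and the covering argument that turns interior estimates into global ones — are all routine.
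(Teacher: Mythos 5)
Your proposal is correct and follows essentially the same route as the paper's proof: Perron's method plus the comparison principle (\Cref{lem:comparison_principle}) for existence, uniqueness, periodicity, and the $L^{\infty}$ bound via the constant barriers $\pm \| r \|_{L^{\infty}}/\beta$, then the two estimates of \Cref{lem:Mou_Zhang} combined with periodicity for \eqref{eq:appendixB_regularity_1} and \eqref{eq:appendixB_regularity_2}. The only difference is cosmetic: where you sketch a bootstrap/Schauder argument to upgrade the viscosity solution to a classical one, the paper simply invokes the arguments of \cite[Theorem 5.2]{mou2021regularity}, which is one of the alternatives you already mention.
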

\begin{proof}
    Existence comes from the classical Perron's method for viscosity solutions while uniqueness is as a result of the comparison principle presented in \Cref{lem:comparison_principle}.  
    The periodicity of $V$ follows from the periodicity of the coefficients and the uniqueness of the solution.
    To show \eqref{eq:appendixB_regularity_1}, by the first statement in \Cref{lem:Mou_Zhang} and the periodicity of functions, it suffices to show that $\| V \|_{L^{\infty}(\mathbb{R}^{d})} \leq C \| r \|_{L^{\infty}(\mathbb{R}^{d})}$ for some constant $C > 0$.
    This follows directly from the comparison principle.
    Specifically, take $M_{\pm} = \pm \| r \|_{L^{\infty}(\mathbb{R}^{d})} / \beta$, then it is straightforward to verify that $M_{+}$ is a supersolution to \eqref{eq:PDE_value_function_2} and $M_{-}$ is a subsolution to \eqref{eq:PDE_value_function_2}.
    By applying \Cref{lem:comparison_principle}, we conclude that $M_{-} \leq V(x) \leq M_{+}$, establishing the $L^{\infty}$ bound of $V$.
    Now by similar arguments in \cite[Theorem 5.2]{mou2021regularity}, we know that the viscosity solution $V$ is also a classical solution.
    Then \eqref{eq:appendixB_regularity_2} follows from \eqref{eq:appendixB_regularity_1} and the second statement in \Cref{lem:Mou_Zhang}. 
 \end{proof}

\begin{lemma}
\label{lem:fractional_Laplace_bound}
    Let $\gamma \in (0, 1)$ and $\alpha \in (0, 1)$.
    For some $C > 0$, we have
    \begin{equation*}
        \| (- \Delta)^{\alpha} f \|_{C^{0, \gamma}(\mathbb{R}^{d})}
        \leq C \| f \|_{C^{2, \gamma}(\mathbb{R}^{d})}.
    \end{equation*}
\end{lemma}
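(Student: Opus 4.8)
The plan is to prove the estimate
$\| (-\Delta)^\alpha f \|_{C^{0,\gamma}(\mathbb{R}^d)} \leq C \| f \|_{C^{2,\gamma}(\mathbb{R}^d)}$
by splitting the singular integral defining the fractional Laplacian into a near-field and a far-field part. Recall the standard representation, valid for $\alpha \in (0,1)$,
\begin{equation*}
    (-\Delta)^\alpha f(x)
    = c_{d,\alpha}\,\mathrm{P.V.}\!\int_{\mathbb{R}^d} \frac{f(x) - f(y)}{|x - y|^{d + 2\alpha}}\, dy
    = \frac{c_{d,\alpha}}{2}\int_{\mathbb{R}^d} \frac{2 f(x) - f(x+z) - f(x-z)}{|z|^{d + 2\alpha}}\, dz.
\end{equation*}
The second-difference form removes the need for a principal value and makes the near-field integrable: for $|z| \leq 1$ a second-order Taylor expansion gives $|2 f(x) - f(x+z) - f(x-z)| \leq \| \nabla^2 f \|_{L^\infty} |z|^2$, which is integrable against $|z|^{-d-2\alpha}$ since $2\alpha < 2$; for $|z| > 1$ we bound the numerator by $4 \| f \|_{L^\infty}$, integrable against $|z|^{-d-2\alpha}$ since $2\alpha > 0$. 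This already yields the $L^\infty$ bound $\| (-\Delta)^\alpha f \|_{L^\infty} \leq C(\| \nabla^2 f \|_{L^\infty} + \| f \|_{L^\infty}) \leq C \| f \|_{C^{2,\gamma}}$.

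Next I would estimate the $C^{0,\gamma}$ seminorm. Fix $x_1, x_2$ with $h := |x_1 - x_2|$, and write $g(x) := (-\Delta)^\alpha f(x)$; I want $|g(x_1) - g(x_2)| \leq C \| f \|_{C^{2,\gamma}} h^\gamma$. Using the second-difference representation, $g(x_1) - g(x_2) = \frac{c_{d,\alpha}}{2}\int_{\mathbb{R}^d} \frac{\Delta^2_z f(x_1) - \Delta^2_z f(x_2)}{|z|^{d+2\alpha}}\, dz$ where $\Delta^2_z f(x) = 2f(x) - f(x+z) - f(x-z)$. Split the region of integration at $|z| = h$ (or, more cleanly, at $|z| = 1$ and handle the inner piece with an $h$-dependent split). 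For $|z| \leq h$: bound each of $|\Delta^2_z f(x_1)|$ and $|\Delta^2_z f(x_2)|$ by $\| \nabla^2 f \|_{L^\infty} |z|^2$ and integrate $|z|^{2 - d - 2\alpha}$ over $\{|z| \leq h\}$ to get a contribution of order $h^{2 - 2\alpha}$; since we may assume $h \leq 1$ and $2 - 2\alpha \geq \gamma$ after shrinking $\gamma$ (indeed $\gamma$ is a "sufficiently small universal constant" throughout this section, so one may take $\gamma \leq \min(2\alpha, 2 - 2\alpha)$), this is $\leq C h^\gamma \| f \|_{C^2}$. For $h < |z| \leq 1$: write $\Delta^2_z f(x_1) - \Delta^2_z f(x_2) = \int_0^1 \nabla(\Delta^2_z f)(x_2 + t(x_1 - x_2)) \cdot (x_1 - x_2)\, dt$ and note $\nabla(\Delta^2_z f)(x) = 2\nabla f(x) - \nabla f(x+z) - \nabla f(x-z)$ has absolute value $\leq \min(4\|\nabla f\|_{L^\infty},\, [\nabla f]_{C^{0,\gamma'}}|z|^{\gamma'})$ — but since $f \in C^{2,\gamma}$ we can do better and bound it by $\|\nabla^2 f\|_{L^\infty}|z|$ as well; choosing the bound $\|\nabla^2 f\|_{L^\infty}\,|z|$ gives $|\Delta^2_z f(x_1) - \Delta^2_z f(x_2)| \leq C \|\nabla^2 f\|_{L^\infty}\, |z|\, h$. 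Hmm, to get the sharp power I would instead interpolate: use $|\Delta^2_z f(x_1) - \Delta^2_z f(x_2)| \leq C \|\nabla^2 f\|_{L^\infty}\, h^\gamma |z|^{2 - \gamma}$ by combining the two pointwise bounds $\leq \|\nabla^2 f\|_{L^\infty}|z|^2$ (in $z$) and $\leq \|\nabla^2 f\|_{L^\infty}|z|\cdot h$ (the mean-value bound just derived), i.e. raising the first to the power $1 - \gamma$ and the second to the power $\gamma$. Integrating $h^\gamma |z|^{2 - \gamma - d - 2\alpha}$ over $\{h < |z| \leq 1\}$ converges at the origin provided $2 - \gamma - 2\alpha > 0$, again guaranteed by taking $\gamma$ small, and yields a contribution $\leq C h^\gamma \|\nabla^2 f\|_{L^\infty}$. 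For $|z| > 1$: bound $|\Delta^2_z f(x_1) - \Delta^2_z f(x_2)|$ by $C \| f \|_{C^{0,\gamma}} h^\gamma$ directly (each term of the difference is a $\gamma$-Hölder increment of $f$ over a distance $h$), and integrate $|z|^{-d - 2\alpha}$ over $\{|z| > 1\}$, which is finite; this gives $\leq C h^\gamma \| f \|_{C^{0,\gamma}}$. Summing the three pieces gives $[\,(-\Delta)^\alpha f\,]_{C^{0,\gamma}} \leq C \| f \|_{C^{2,\gamma}(\mathbb{R}^d)}$, and together with the $L^\infty$ bound this completes the proof.

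The main obstacle — and the only genuinely delicate point — is the $h < |z| \leq 1$ region, where a naive bound either diverges at the origin or produces the wrong power of $h$. The resolution is the interpolation trick above: balancing the "two-derivatives-on-$z$" bound $|z|^2$ against the "one-derivative-on-$z$, one-increment-in-$x$" bound $|z|\,h$ to extract exactly a factor $h^\gamma$ while keeping the residual power of $|z|$ integrable near the origin (this forces $2 - \gamma - 2\alpha > 0$). It is precisely here that we use the freedom, already invoked in \Cref{lem:Mou_Zhang}, to take $\gamma$ to be a sufficiently small universal constant; alternatively one can run the whole argument splitting at $|z| = h$ rather than $|z| = 1$, in which case the constraint reads $2\alpha < 2$ and $2\alpha > 0$ with no constraint on $\gamma$ beyond $\gamma < 1$, but then the far-field piece $|z| > h$ must be subdivided further. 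Either way the computation is routine once the correct split and interpolation are in place; I would present the $|z| = 1$ split for cleanliness and simply remark that $\gamma$ may be shrunk as needed, which is consistent with the hypotheses of the theorem.
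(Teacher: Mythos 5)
Your overall strategy---the second-difference representation, a near-field/far-field split, an $L^{\infty}$ bound plus a H\"older-seminorm estimate---is the same as the paper's, but your near-field estimate uses only $\| \nabla^{2} f \|_{L^{\infty}}$, and this is where a genuine gap appears. Your region $|z| \leq h$ contributes $C h^{2 - 2\alpha}$ and your interpolated region $h < |z| \leq 1$ needs $2 - \gamma - 2\alpha > 0$, so the argument only yields the conclusion under the extra restriction $\gamma \leq 2 - 2\alpha$. The lemma, however, is stated for every $\gamma \in (0, 1)$ and $\alpha \in (0, 1)$; when $\alpha$ is close to $1$ and $\gamma$ is not small your proof breaks down, and appealing to ``shrinking $\gamma$'' changes the statement being proved (the smallness of $\gamma$ in this appendix comes from \Cref{lem:Mou_Zhang}, not from this lemma, which is used as an unconditional estimate). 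Your fallback remark that splitting only at $|z| = h$ removes all constraints on $\gamma$ is also not correct: with the two bounds $\| \nabla^{2} f \|_{L^{\infty}} |z|^{2}$ (inner) and $\| \nabla^{2} f \|_{L^{\infty}} |z| h$ (outer) the inner region still produces $h^{2 - 2\alpha}$, so the same restriction reappears.

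The missing idea is to use the $\gamma$-H\"older continuity of the Hessian---that is, the full $C^{2, \gamma}$ norm you are allowed to put on the right-hand side---in the near field. Writing, as the paper does, $g(x, z) = \left( 2 f(x) - f(x + z) - f(x - z) \right) / |z|^{2} = - \int_{0}^{1} (1 - t) \frac{z \otimes z}{|z|^{2}} : \left[ \nabla^{2} f(x + t z) + \nabla^{2} f(x - t z) \right] dt$, one gets $\| g(\cdot, z) \|_{C^{0, \gamma}(\mathbb{R}^{d})} \leq C \| f \|_{C^{2, \gamma}(\mathbb{R}^{d})}$ uniformly in $z$, hence $\left| \Delta_{z}^{2} f(x_{1}) - \Delta_{z}^{2} f(x_{2}) \right| \leq C \| f \|_{C^{2, \gamma}(\mathbb{R}^{d})} \, |z|^{2} \, h^{\gamma}$; integrating $|z|^{2 - d - 2\alpha}$ over $|z| < 1$ converges for every $\alpha \in (0, 1)$, so the near-field H\"older estimate holds with no relation between $\gamma$ and $\alpha$, no split at $|z| = h$, and no interpolation. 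Your far-field piece and your $L^{\infty}$ bound are fine as they stand (the paper uses a first-difference form for $|z| > 1$, an immaterial difference). With this single change your argument becomes essentially the paper's proof and covers the full range claimed in the lemma.
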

\begin{proof}
    By the definition of the fractional Laplacian, one has
    \begin{equation*}
        \begin{aligned}
            &\quad \, (- \Delta)^{\alpha} f(x)
            = C_{d, \alpha} \, \text{P.V.} \int_{\mathbb{R}^{d}} \frac{f(x) - f(y)}{|x - y|^{d + 2 \alpha}} \, d y\\
            &= C_{d, \alpha} \left ( \text{P.V.} \int_{|x - y| < 1} \frac{f(x) - f(y)}{|x - y|^{d + 2 \alpha}} \, d y
            + \int_{|x - y| > 1} \frac{f(x) - f(y)}{|x - y|^{d + 2 \alpha}} \, d y \right )\\
            &= C_{d, \alpha} \left ( \frac{1}{2} \underbrace{\int_{|z| < 1} \frac{2 f(x) - f(x + z) - f(x - z)}{|z|^{d + 2 \alpha}} \, d z}_{F_{1}(x)}
            + \underbrace{\int_{|z| > 1} \frac{f(x) - f(x + z)}{|z|^{d + 2 \alpha}} \, d z}_{F_{2}(x)} \right ).
        \end{aligned}
    \end{equation*}
    Observe that
    \begin{equation*}
        \| F_{2} \|_{C^{0, \gamma}(\mathbb{R}^{d})}
        \leq 2 \| f \|_{C^{0, \gamma}(\mathbb{R}^{d})} \int_{|z| > 1} \frac{1}{|z|^{d + 2 \alpha}} \, d z
        \leq C \| f \|_{C^{0, \gamma}(\mathbb{R}^{d})}.
    \end{equation*}
    To estimate $F_{1}$, we denote $g(x, z) = (2 f(x) - f(x + z) - f(x - z)) / |z|^{2}$.
    Applying Taylor's theorem, we obtain
    \begin{equation*}
        g(x, z)
        = - \int_{0}^{1} (1 - t) \frac{z \otimes z}{|z|^{2}} : \left [ \nabla^{2} f (x + t z)
        + \nabla^{2} f (x - t z) \right ] \, d t. 
    \end{equation*}
    Given that $f \in C^{2, \gamma}(\mathbb{R}^{d})$, we deduce
    \begin{equation*}
        \| g(\cdot, z) \|_{C^{0, \gamma}(\mathbb{R}^{d})}
        \leq \tilde{C} \| f \|_{C^{2, \gamma}(\mathbb{R}^{d})},
    \end{equation*}
    where $\tilde{C} > 0$ is independent of $z \in \mathbb{R}^{d}$.
    Therefore we have 
    \begin{equation*}
        \| F_{1} \|_{C^{0, \gamma}(\mathbb{R}^{d})}
        \leq \tilde{C} \| f \|_{C^{2, \gamma}(\mathbb{R}^{d})} \int_{|z| < 1} \frac{1}{|z|^{d - 2 + 2 \alpha}} \, d z
        \leq C \| f \|_{C^{2, \gamma}(\mathbb{R}^{d})}.
    \end{equation*}
\end{proof}

\begin{proof}[Proof of \Cref{thm:error_bound}]
    \Cref{eq:PDE_value_function} is reformulated as follows: 
    \begin{equation*}
        \begin{aligned}
            \mathcal{L} V(x)
            := \beta V(x)
            - \drift(x) \cdot \nabla V (x)
            - \diffusionOrdinary(x) : \nabla^{2} V (x)
            + \diffusionFractional(x) (- \Delta)^{\alpha} V (x)
            = r(x).
        \end{aligned}
    \end{equation*}
    Similarly we denote $\widehat{\mathcal{L}}$ as the elliptic operator with coefficients $\widehat{\drift}$, $\widehat{\diffusionOrdinary}$, and $\widehat{\diffusionFractional}$.
    By \Cref{lem:regularity}, we have
    \begin{equation*}
        \| V \|_{C^{2, \gamma}(\mathbb{R}^{d})}
        \leq C \| r \|_{C^{0, \gamma}(\mathbb{R}^{d})}
        \quad \text{and} \quad
        \| \widehat{V} \|_{C^{2, \gamma}(\mathbb{R}^{d})}
        \leq C \| r \|_{C^{0, \gamma}(\mathbb{R}^{d})},
    \end{equation*}
    for some constants $C > 0$ and $\gamma \in (0, 1)$.
    Notice that 
    \begin{equation*}
        \left ( \mathcal{L} - \widehat{\mathcal{L}} \right ) V
        = \left ( \widehat{\drift} - \drift \right ) \cdot \nabla V
        + \left ( \widehat{\diffusionOrdinary} - \diffusionOrdinary \right ) : \nabla^{2} V
        + \left ( \diffusionFractional - \widehat{\diffusionFractional} \right ) (- \Delta)^{\alpha} V.
    \end{equation*}
    Using the fact that $V\in C^{2, \gamma}(\mathbb{R}^{d})$ and the bounds $\| \drift - \widehat{\drift} \|_{C^{0, \gamma}(\mathbb{R}^{d})}, \| \diffusionOrdinary - \widehat{\diffusionOrdinary} \|_{C^{0, \gamma}(\mathbb{R}^{d})}, \| \diffusionFractional - \widehat{\diffusionFractional} \|_{C^{0, \gamma}(\mathbb{R}^{d})} \leq \epsilon$, along with
    \Cref{lem:fractional_Laplace_bound}, we can deduce that
    \begin{equation*}
        \left \| \left ( \mathcal{L} - \widehat{\mathcal{L}} \right ) V \right \|_{C^{0, \gamma}(\mathbb{R}^{d})}
        \leq C \epsilon \| V \|_{C^{2, \gamma}(\mathbb{R}^{d})}.
    \end{equation*}
    for some positive constant $C$. 
    Starting from
    \begin{equation*}
        \widehat{\mathcal{L}} \left ( \widehat{V} - V \right )
        = \left ( \mathcal{L} - \widehat{\mathcal{L}} \right ) V,
    \end{equation*}
    we derive the following bound
    \begin{equation*}
        \left \| \widehat{V} - V \right \|_{C^{2, \gamma}(\mathbb{R}^{d})}
        \leq C \left \| \left ( \mathcal{L} - \widehat{\mathcal{L}} \right ) V \right \|_{C^{0, \gamma}(\mathbb{R}^{d})}
        \leq C \epsilon \| V \|_{C^{2, \gamma}(\mathbb{R}^{d})}
        \leq C \epsilon \| r \|_{C^{0, \gamma}(\mathbb{R}^{d})}
        \leq C \epsilon
    \end{equation*}
    where $C > 0$ is used as a generic constant throughout. 
\end{proof}

\section{Numerical Techniques for the Solution of the Fractional Fokker--Planck Equation and Its Derivatives}\label{appendix:numerical_method}
A detailed description for general dimensions is available in \cite{ye2026fast}.
Here, we summarize the essential components and describe the modifications needed to compute the required gradients.

\subsection{Fractional Fokker--Planck Equation}
Applying the Fourier transform gives
\begin{equation}\label{eq:FFPE_Fourier}
    \left \{
    \begin{aligned}
        \frac{\partial}{\partial t} \widehat{p}(\xi, t)
        &= - \drift(x_{0}) i \xi \widehat{p}(\xi, t)
        - \diffusionOrdinary(x_{0}) |\xi|^{2} \widehat{p}(\xi, t)
        - \diffusionFractional(x_{0}) |\xi|^{2 \alpha} \widehat{p}(\xi, t)\\
        \widehat{p}(\xi, 0)
        &= \exp \left ( - i \xi x_{0} \right )
    \end{aligned}
    \right ..
\end{equation}
The solution of \Cref{eq:FFPE_Fourier} is
\begin{equation}
    \begin{aligned}
        \widehat{p}(\xi, t)
        &= \exp \left ( - i \xi \big ( x_{0} + \drift(x_{0}) t \big ) \right )
        \exp \left ( - \left ( \diffusionOrdinary(x_{0}) |\xi|^{2} + \diffusionFractional(x_{0}) |\xi|^{2 \alpha} \right ) t \right ).
    \end{aligned}
\end{equation}
Inverse Fourier transform gives
\begin{equation}\label{eq:solution_representation_raw}
    \begin{aligned}
        &\quad \ 2 \pi p(x, t)
        = \int_{- \infty}^{\infty} \widehat{p}(\xi, t) \exp( i \xi x ) \, d \xi\\
        &= \int_{- \infty}^{\infty} \underbrace{\exp \left ( i \xi \big ( x - x_{0} - \drift(x_{0}) t \big ) \right )}_{\text{oscillating part}}
        \underbrace{\exp \left ( - \left ( \diffusionOrdinary(x_{0}) |\xi|^{2} + \diffusionFractional(x_{0}) |\xi|^{2 \alpha} \right ) t \right )}_{\text{decay part}} \, d \xi.
    \end{aligned}
\end{equation}
We can further simplify \Cref{eq:solution_representation_raw} into
\begin{equation}\label{eq:solution_representation}
    \begin{aligned}
        &\quad \, p \left ( x, t; x_{0}, \alpha, \drift(x_{0}), \diffusionOrdinary(x_{0}), \diffusionFractional(x_{0}) \right )\\
        &= \frac{1}{\pi} \int_{0}^{\infty} \cos \left ( \xi \big ( x - x_{0} - \drift(x_{0}) t \big ) \right )
        \left ( - \left ( \diffusionOrdinary(x_{0}) \xi^{2} + \diffusionFractional(x_{0}) \xi^{2 \alpha} \right ) t \right ) \, d \xi.
    \end{aligned}
\end{equation}
Then apply the numerical method in \cite{ye2026fast} to compute the solution efficiently.

\subsection{Gradients Computation}
Based on \Cref{eq:log_likelihood_function_approximation}, to compute the approximated gradients, we only need to derive $\nabla \ln p \left ( x_{(j + 1) \Delta t}^{(i)}, \Delta t; x_{j \Delta t}^{(i)}, \alpha, \Theta(x_{j \Delta t}^{(i)}; \theta) \right )$ and then sum them up. Essentially, we merely need to compute both $p$ and $\nabla p$ for any given data ($x_{(j + 1) \Delta t}^{(i)}, x_{j \Delta t}^{(i)}, \Delta t$).

Using the basis functions to expand the coefficients $\drift(x)$, $\diffusionOrdinary(x)$, $\diffusionFractional(x)$ as in \Cref{eq:basis_function_expansion}, we can rewrite \Cref{eq:solution_representation} as
\begin{equation}\label{eq:solution_representation_with_expansion}
    \begin{aligned}
        &\quad \ \, p \left ( x_{(j + 1) \Delta t}^{(i)}, \Delta t; x_{j \Delta t}^{(i)}, \alpha, \Theta(x_{j \Delta t}^{(i)}; \theta) \right )\\
        &= \frac{1}{\pi} \int_{0}^{\infty} \cos \left ( \underbrace{\xi \left ( x_{(j + 1) \Delta t}^{(i)} - x_{j \Delta t}^{(i)} - \Delta t \sum_{k = 1}^{K} \theta_{1, k} \phi_{1, k}(x_{j \Delta t}^{(i)}) \right )}_{I_{1} := \text{interior1}} \right )\\
        &\qquad \exp \left ( \underbrace{- \left ( \xi^{2} \sum_{k = 1}^{K} \theta_{2, k} \phi_{2, k}(x_{j \Delta t}^{(i)}) + \xi^{2 \alpha} \sum_{k = 1}^{K} \theta_{3, k} \phi_{3, k}(x_{j \Delta t}^{(i)}) \right ) \Delta t}_{I_{2} := \text{interior2}} \right ) \, d \xi.
    \end{aligned}
\end{equation}
We can also compute that
\begin{equation}\label{eq:solution_representation_derivative}
    \left \{
    \begin{aligned}
        \frac{\partial p}{\partial \theta_{1, k}}
        &= \frac{\Delta t}{\pi} \phi_{1, k}(x_{j \Delta t}^{(i)}) \int_{0}^{\infty} \xi \sin(I_{1}) \exp(I_{2}) \, d \xi,\\
        \frac{\partial p}{\partial \theta_{2, k}}
        &= - \frac{\Delta t}{\pi} \phi_{2, k}(x_{j \Delta t}^{(i)}) \int_{0}^{\infty} \xi^{2} \cos(I_{1}) \exp(I_{2}) \, d \xi,\\
        \frac{\partial p}{\partial \theta_{3, k}}
        &= - \frac{\Delta t}{\pi} \phi_{3, k}(x_{j \Delta t}^{(i)}) \int_{0}^{\infty} \xi^{2 \alpha} \cos(I_{1}) \exp(I_{2}) \, d \xi.
    \end{aligned}
    \right .
\end{equation}

Essentially, we have
\begin{equation}\label{eq:solution_representation_derivative_lnp}
    \left \{
    \begin{aligned}
        \frac{\partial \ln p}{\partial \theta_{1, k}}
        &= \Delta t \phi_{1, k}(x_{j \Delta t}^{(i)}) \frac{\int_{0}^{\infty} \xi \sin(I_{1}) \exp(I_{2}) \, d \xi}{\int_{0}^{\infty} \cos(I_{1}) \exp(I_{2}) \, d \xi},\\
        \frac{\partial \ln p}{\partial \theta_{2, k}}
        &= - \Delta t \phi_{2, k}(x_{j \Delta t}^{(i)}) \frac{\int_{0}^{\infty} \xi^{2} \cos(I_{1}) \exp(I_{2}) \, d \xi}{\int_{0}^{\infty} \cos(I_{1}) \exp(I_{2}) \, d \xi},\\
        \frac{\partial \ln p}{\partial \theta_{3, k}}
        &= - \Delta t \phi_{3, k}(x_{j \Delta t}^{(i)}) \frac{\int_{0}^{\infty} \xi^{2 \alpha} \cos(I_{1}) \exp(I_{2}) \, d \xi}{\int_{0}^{\infty} \cos(I_{1}) \exp(I_{2}) \, d \xi}.
    \end{aligned}
    \right .
\end{equation}

\begin{remark}
    If we further expand $\displaystyle \alpha(x) = \sum_{k = 1}^{K} \theta_{4, k} \phi_{4, k}(x)$, then the corresponding derivatives are
    \begin{equation*}
        \left \{
        \begin{aligned}
            \frac{\partial p}{\partial \theta_{4, k}}
            &= - \frac{2 \Delta t}{\pi} \phi_{4, k}(x_{j \Delta t}^{(i)}) \left [ \sum_{k = 1}^{K} \theta_{3, k} \phi_{3, k}(x_{j \Delta t}^{(i)}) \right ] \int_{0}^{\infty} \xi^{2 \alpha} \ln(\xi) \cos(I_{1}) \exp(I_{2}) \, d \xi,\\
            \frac{\partial \ln p}{\partial \theta_{4, k}}
            &= - 2 \Delta t \phi_{4, k}(x_{j \Delta t}^{(i)}) \left [ \sum_{k = 1}^{K} \theta_{3, k} \phi_{3, k}(x_{j \Delta t}^{(i)}) \right ] \frac{\int_{0}^{\infty} \xi^{2 \alpha} \ln(\xi) \cos(I_{1}) \exp(I_{2}) \, d \xi}{\int_{0}^{\infty} \cos(I_{1}) \exp(I_{2}) \, d \xi}.
        \end{aligned}
        \right .
    \end{equation*}
\end{remark}

\subsection{General Dimension}
In general, the solution is represented by
\begin{equation}
    \begin{aligned}
        &\quad \, \tilde{p}(y, t; \boldsymbol{x}_{0}, \alpha, \boldsymbol{b}, \diffusionOrdinary, \diffusionFractional)
        = p(\boldsymbol{x}, t; \boldsymbol{x}_{0}, \alpha, \boldsymbol{b}, \diffusionOrdinary, \diffusionFractional)\\
        &= \frac{1}{y^{(d - 2) / 2}} \int_{0}^{\infty} \left ( \frac{r}{2 \pi} \right )^{d / 2} J_{(d - 2) / 2}(y r) \exp \left ( - \left ( \diffusionOrdinary r^{2} + \diffusionFractional r^{2 \alpha} \right ) t \right ) \, d r,
    \end{aligned}
\end{equation}
where $y := \left | \boldsymbol{x} - \boldsymbol{x}_{0} - \boldsymbol{b} t \right |$ and $J_{\nu}$ is the Bessel function of the first kind.

Differentiating with respect to $y$, $\diffusionOrdinary$, and $\diffusionFractional$ gives
\begin{equation}
    \begin{aligned}
        \frac{\partial \tilde{p}}{\partial y}(y, t)
        &= \frac{1}{y^{d / 2}} \int_{0}^{\infty} \left ( \frac{r}{2 \pi} \right )^{d / 2} \left ( y r J_{(d - 4) / 2}(y r) - (d - 2) J_{(d - 2) / 2}(y r) \right )\\
        &\qquad \qquad \qquad \qquad \exp \left ( - \left ( \diffusionOrdinary r^{2} + \diffusionFractional r^{2 \alpha} \right ) t \right ) \, d r,\\
        \frac{\partial \tilde{p}}{\partial \diffusionOrdinary}(y, t)
        &= \frac{- t}{y^{(d - 2) / 2}} \int_{0}^{\infty} \left ( \frac{r}{2 \pi} \right )^{d / 2} r^{2} J_{(d - 2) / 2}(y r) \exp \left ( - \left ( \diffusionOrdinary r^{2} + \diffusionFractional r^{2 \alpha} \right ) t \right ) \, d r,\\
        \frac{\partial \tilde{p}}{\partial \diffusionFractional}(y, t)
        &= \frac{- t}{y^{(d - 2) / 2}} \int_{0}^{\infty} \left ( \frac{r}{2 \pi} \right )^{d / 2} r^{2 \alpha} J_{(d - 2) / 2}(y r) \exp \left ( - \left ( \diffusionOrdinary r^{2} + \diffusionFractional r^{2 \alpha} \right ) t \right ) \, d r.
    \end{aligned}
\end{equation}

With a higher-dimensional Fourier basis expansion, the same construction yields \Cref{alg:Adam_with_tail_correction} for the higher-dimensional setting.

\section{Algorithm Details of the Robust Maximum Likelihood Estimation}\label{appendix:maximum_likelihood_estimation}
This section presents the omitted details from \Cref{alg:Adam_with_tail_correction} along with relevant explanations.

\subsection{Algorithms}
\Cref{alg:Dirac_integration_with_singularity,alg:Dirac_integration_for_slow_decay,alg:Dirac_integration_for_slow_decay,alg:Dirac_basic_solver,alg:FFPE_direct_gradient,alg:FFPE_finite_difference_gradient,alg:TCF_computation}, which were omitted from \Cref{alg:Adam_with_tail_correction}, are presented in this subsection.

\begin{algorithm}[htbp]
    \caption{Integration with Singularity}\label{alg:Dirac_integration_with_singularity}
    \begin{algorithmic}[1]
        \Require A smooth function denoted by $f$, a fractional exponent signified by $\alpha \in (0, 1)$, and a pseudo-temporal coefficient represented by $\tau > 0$
        \Ensure Computation of the integral $\displaystyle \int_{0}^{1} f(z) \exp \left ( - z^{2 \alpha} \tau \right ) \, d z$
        
        \If{$\tau > \textit{1}$ \textbf{or} precomputed}
            \State $\{ x_{j} \}_{j = 1}^{16} \gets $ Gaussian quadrature points in $[0, 1]$
            \State $\{ w_{j} \}_{j = 1}^{16} \gets $ solve from $\int_{0}^{1} f(z) \exp \left ( - z^{2 \alpha} \tau \right ) \, d z = \sum_{j = 1}^{16} w_{j} f(x_{j})$ for Legendre polynomials of order $0, \ldots, 15$
            \Statex \Comment{Can be precomputed for repetitive evaluations under identical $\alpha$, and $\tau$}
            \State \Return $\sum_{j = 1}^{N} w_{j} f(x_{j})$
        \Else
            \State Choose $K$ as follow:
            \Statex
            \begin{center}
                \begin{tikzpicture}[scale=1]
                    \draw[->] (0, 0) -- (10, 0) node[right] {$\tau$};
\coordinate (tick1) at (3, 0);
\coordinate (tick2) at (5, 0);
\coordinate (tick3) at (7, 0);
\coordinate (tick4) at (9, 0);

\foreach \tick in {tick1, tick2, tick3, tick4}
{
    \draw (\tick) -- ++(0, +0.2);
}

\node[below] at (tick1) {\textit{1e-3}};
\node[below] at (tick2) {\textit{1e-2}};
\node[below] at (tick3) {\textit{1e-1}};
\node[below] at (tick4) {\textit{1}};

\node[above] at (2, 0) {$K_{4} = 4$};
\node[above] at (4, 0) {$K_{3} = 6$};
\node[above] at (6, 0) {$K_{2} = 9$};
\node[above] at (8, 0) {$K_{1} = 17$};

                \end{tikzpicture}
            \end{center}
            \State \Return $\displaystyle \sum_{\kappa = 0}^{K} \frac{(- \tau)^{\kappa}}{\kappa!} \int_{0}^{1} f(z) z^{2 \alpha \kappa} \, d z$, where the integrals here are evaluated by the Gauss-Jacobi quadrature
        \EndIf
    \end{algorithmic}
\end{algorithm}

\begin{algorithm}[htbp]
    \caption{Integration for Slow Decay}\label{alg:Dirac_integration_for_slow_decay}
    \begin{algorithmic}[1]
        \Require A smooth oscillatory function denoted by $f$, a fractional exponent signified by $\alpha \in (0, 1)$, and a pseudo-temporal coefficient represented by $\tau > 0$
        \Ensure Computation of the integral $\displaystyle \int_{1}^{\infty} f(z) \exp \left ( - z^{2 \alpha} \tau \right ) \, d z$

        \State $M, M_{\text{max}}, \varepsilon \gets 80, 5120, \textit{1e-14}$
            \Comment{Adjustable}
        \State $I_{\text{previous}}, I_{\text{current}} \gets 0, \infty$
        \While{$|I_{\text{current}} - I_{\text{previous}}| > \varepsilon$ \textbf{and} $M \leq M_{\text{max}}$}
            \State $I_{\text{previous}} \gets I_{\text{current}}$
            \State $I_{\text{current}} \gets$ Apply quadrature to $\displaystyle \int_{1}^{M} f(z) \exp(- z^{2 \alpha} \tau) w_{M}(z) \, d z$
            \Statex \Comment{Utilize precomputed quadrature points and weights, scaling the number of quadrature points with $M$}
            \State $M \gets 2 M$
        \EndWhile
        \If{$|I_{\text{current}} - I_{\text{previous}}| > \varepsilon$}
            \State \textbf{raise} a flag (without stopping)
        \EndIf
        \State \Return $I_{\text{current}}$
    \end{algorithmic}
\end{algorithm}

The windowing function $w_{M}$ in \Cref{alg:Dirac_integration_for_slow_decay} is defined as
\begin{equation*}
    w_{M}(z)
    = \left \{
    \begin{aligned}
        &1, && s \leq 0\\
        &\exp \left ( - 2 \frac{\exp(- 1 / s^{2})}{(1 - s)^{2}} \right ), && 0 < s < 1\\
        &0, && s \geq 1
    \end{aligned}
    \right .,
\end{equation*}
where the parameter $s$ is given by
\begin{equation*}
    s(z)
    = \frac{2 |z|}{M} - 1,
\end{equation*}
with $M > 0$.

\begin{algorithm}[htbp]
    \caption{Fundamental Algorithm}\label{alg:Dirac_basic_solver}
    \begin{algorithmic}[1]
        \Require A parameterized function denoted by $f_{(y, \diffusionOrdinary, t)}$, magnitude of the displacement $y$, temporal parameter $t > 0$, coefficients $\diffusionOrdinary \geq 0$, $\diffusionFractional > 0$, and a fractional exponent $\alpha \in (0, 1)$
        \Ensure Computation of the integral $\displaystyle p = \int_{0}^{\infty} f_{(y, \diffusionOrdinary, t)}(r) \exp \left ( - r^{2 \alpha} \diffusionFractional t \right ) \, d r$

        \If{$y \leq 10$}
            \State $p \gets $ \Call{\Cref{alg:Dirac_integration_for_slow_decay}}{$f_{(y, \diffusionOrdinary, t)}$, $\alpha$, $\diffusionFractional t$} $+$ \Call{\Cref{alg:Dirac_integration_with_singularity}}{$f_{(y, \diffusionOrdinary, t)}$, $\alpha$, $\diffusionFractional t$}
        \EndIf
            \Comment{Apply force scaling when $y$ is large}
        \If{$y > 10$ \textbf{or} $|I_{\text{current}} - I_{\text{previous}}| > \varepsilon$ remains true in \Call{\Cref{alg:Dirac_integration_for_slow_decay}}{}}
            \State $h \gets $ the exponent of $r$ in $f_{(y, \diffusionOrdinary, t)}(r)$
                \Comment{e.g. $h = 2 \alpha$ for $r^{2 \alpha} \cos \left ( y r \right ) \exp \left ( - \diffusionOrdinary r^{2} t \right )$}
            \State $y^{\text{scaled}} \gets \pi / 2$;
            $t^{\text{scaled}} \gets \left ( \frac{y^{\text{scaled}}}{y} \right )^{2 \alpha} t$;
            $\diffusionOrdinary^{\text{scaled}} \gets \left ( \frac{y^{\text{scaled}}}{y} \right )^{2 - 2 \alpha} \diffusionOrdinary$
                \Comment{The scaling law}
            \State $p \gets$ $\left ( \frac{y^{\text{scaled}}}{y} \right )^{h + 1}$ $\times$ [ \Call{\Cref{alg:Dirac_integration_for_slow_decay}}{$f_{(y, \diffusionOrdinary, t)^{\text{scaled}}}$, $\alpha$, $\diffusionFractional t^{\text{scaled}}$} $+$ \Call{\Cref{alg:Dirac_integration_with_singularity}}{$f_{(y, \diffusionOrdinary, t)^{\text{scaled}}}$, $\alpha$, $\diffusionFractional t^{\text{scaled}}$} ]
        \EndIf
        \State \Return $p$
    \end{algorithmic}
\end{algorithm}

\begin{algorithm}[htbp]
    \caption{Direct Gradient Computation}\label{alg:FFPE_direct_gradient}
    \begin{algorithmic}[1]
        \Require Parameters $\theta = \{ \theta_{l, k} \}_{l = 1, k = 1}^{l = 3, k = K}$, current state $x_{\textnormal{current}}$, next state $x_{\textnormal{next}}$, temporal difference $\Delta t > 0$ and a fractional exponent $\alpha \in (0, 1)$
        \Ensure The direct computation of the gradient of $- \ell(\theta)$ using one sample
        \State $y \gets x_{\textnormal{next}} - x_{\textnormal{current}} - \Delta t \sum_{k = 1}^{K} \theta_{1, k} \phi_{1, k}(x_{\textnormal{current}})$
        \State $\diffusionOrdinary, \diffusionFractional \gets \max(0, \sum_{k = 1}^{K} \theta_{2, k} \phi_{2, k}(x_{\textnormal{current}})), \max(\textit{1e-8}, \sum_{k = 1}^{K} \theta_{3, k} \phi_{3, k}(x_{\textnormal{current}}))$
        \State $f \gets \text{Function } y, \diffusionOrdinary, t \mapsto \left [ \text{Function } r \mapsto \cos \left ( y r \right ) \exp \left ( - \diffusionOrdinary r^{2} t \right ) \right ]$
        \State $f_{\drift} \gets \text{Function } y, \diffusionOrdinary, t \mapsto \left [ \text{Function } r \mapsto r \sin \left ( y r \right ) \exp \left ( - \diffusionOrdinary r^{2} t \right ) \right ]$
        \State $f_{\diffusionOrdinary} \gets \text{Function } y, \diffusionOrdinary, t \mapsto \left [ \text{Function } r \mapsto r^{2} \cos \left ( y r \right ) \exp \left ( - \diffusionOrdinary r^{2} t \right ) \right ]$
        \State $f_{\diffusionFractional} \gets \text{Function } y, \diffusionOrdinary, t \mapsto \left [ \text{Function } r \mapsto r^{2 \alpha} \cos \left ( y r \right ) \exp \left ( - \diffusionOrdinary r^{2} t \right ) \right ]$

        \State $p \gets $ \Call{\Cref{alg:Dirac_basic_solver}}{$f$, $y$, $\Delta t$, $\diffusionOrdinary$, $\diffusionFractional$, $\alpha$}
        \State $p_{\drift} \gets $ $\Delta t$ $\times$ \Call{\Cref{alg:Dirac_basic_solver}}{$f_{\drift}$, $y$, $\Delta t$, $\diffusionOrdinary$, $\diffusionFractional$, $\alpha$}
        \State $p_{\diffusionOrdinary} \gets $ $- \Delta t$ $\times$ \Call{\Cref{alg:Dirac_basic_solver}}{$f_{\diffusionOrdinary}$, $y$, $\Delta t$, $\diffusionOrdinary$, $\diffusionFractional$, $\alpha$}
        \State $p_{\diffusionFractional} \gets $ $- \Delta t$ $\times$ \Call{\Cref{alg:Dirac_basic_solver}}{$f_{\diffusionFractional}$, $y$, $\Delta t$, $\diffusionOrdinary$, $\diffusionFractional$, $\alpha$}
        \State \Return
        \Statex \hspace{\algorithmicindent}
        $\begin{aligned}[t]
            - \nabla_{\theta} \ell(\theta)
            =
            - \big[
                &[ \phi_{1, k}(x_{\textnormal{current}}) p_{\drift} / p ]_{k = 1}^{K},\\
                &[ \phi_{2, k}(x_{\textnormal{current}}) p_{\diffusionOrdinary} / p ]_{k = 1}^{K},\\
                &[ \phi_{3, k}(x_{\textnormal{current}}) p_{\diffusionFractional} / p ]_{k = 1}^{K}
            \big].
        \end{aligned}$
    \end{algorithmic}
\end{algorithm}

\begin{algorithm}[htbp]
    \caption{Finite Difference Gradient Computation}\label{alg:FFPE_finite_difference_gradient}
    \begin{algorithmic}[1]
        \Require Parameters $\theta = \{ \theta_{l, k} \}_{l = 1, k = 1}^{l = 3, k = K}$, current position $x_{\textnormal{current}}$, next position $x_{\textnormal{next}}$, temporal difference $\Delta t > 0$ and a fractional exponent $\alpha \in (0, 1)$
        \Ensure The finite difference approximation of the gradient of $- \ell(\theta)$ using one sample
        \State $y \gets x_{\textnormal{next}} - x_{\textnormal{current}} - \Delta t \sum_{k = 1}^{K} \theta_{1, k} \phi_{1, k}(x_{\textnormal{current}})$
        \State $\diffusionOrdinary, \diffusionFractional \gets \max(0, \sum_{k = 1}^{K} \theta_{2, k} \phi_{2, k}(x_{\textnormal{current}})), \max(\textit{1e-8}, \sum_{k = 1}^{K} \theta_{3, k} \phi_{3, k}(x_{\textnormal{current}}))$
        \State $f \gets \text{Function } y, \diffusionOrdinary, t \mapsto \left [ \text{Function } r \mapsto \cos \left ( y r \right ) \exp \left ( - \diffusionOrdinary r^{2} t \right ) \right ]$

        \State $p \gets $ \Call{\Cref{alg:Dirac_basic_solver}}{$f$, $y$, $\Delta t$, $\diffusionOrdinary$, $\diffusionFractional$, $\alpha$}
        \State $\nabla_{\theta} \ell(\theta) \gets \boldsymbol{0}$; randomly select at most $B^{\text{FD}} = 10$ components of $\theta$
        \Comment{$B^{\text{FD}}$ is adjustable}
        \For{each selected component with index $m$}
            \State $\Delta \theta \sim \pm \text{Uniform}(0.001, 0.1)$
            \State $\tilde{\theta} \gets \theta + \boldsymbol{e}_{m} \Delta \theta$
                \Comment{$\boldsymbol{e}_{m}$ is the unit vector with a $1$ in the $m$-th position}
            \State $y^{\text{FD}}, \diffusionOrdinary^{\text{FD}}, \diffusionFractional^{\text{FD}} \gets $ calculated from $\tilde{\theta}$
            \State $p^{\text{FD}} \gets $ \Call{\Cref{alg:Dirac_basic_solver}}{$f$, $y^{\text{FD}}$, $\Delta t$, $\diffusionOrdinary^{\text{FD}}$, $\diffusionFractional^{\text{FD}}$, $\alpha$}
            \State $\nabla_{\theta} \ell(\theta) \gets \nabla_{\theta} \ell(\theta) + \boldsymbol{e}_{m} (\ln(p^{\text{FD}}) - \ln(p)) / \Delta \theta$
        \EndFor
        \State \Return $- \nabla_{\theta} \ell(\theta)$
    \end{algorithmic}
\end{algorithm}

The variables $y$, $\diffusionOrdinary$, and $\diffusionFractional$ in \Cref{alg:FFPE_direct_gradient} or \Cref{alg:FFPE_finite_difference_gradient} will vary depending on the specific values of $x_{j \Delta t}$ and $x_{(j + 1) \Delta t}$.
To compute a batch of evaluations, one can iterate over all the samples in the batch, compute the gradient for each individual sample, and then sum these gradients or take their average.

\begin{algorithm}[htbp]
    \caption{Tail Correction Factor Computation}\label{alg:TCF_computation}
    \begin{algorithmic}[1]
        \Require Parameters $\theta = \{ \theta_{l, k} \}_{l = 1, k = 1}^{l = 3, k = K}$, temporal difference $\Delta t > 0$, the cutting threshold \texttt{CT}, position difference median $\mu$, tail part ratio in sample $\texttt{R}_{\text{sample}}$ and a fractional exponent $\alpha \in (0, 1)$
        \Ensure The tail correction factor \texttt{TCF}

        \State $\drift, \diffusionOrdinary, \diffusionFractional \gets $ means of $\sum_{k = 1}^{K} \theta_{1, k} \phi_{1, k}(x), \sum_{k = 1}^{K} \theta_{2, k} \phi_{2, k}(x), \sum_{k = 1}^{K} \theta_{3, k} \phi_{3, k}(x)$
        \Statex \Comment{Could be different kinds of mean (arithmetic mean gives $\theta_{1, 1}, \theta_{2, 1}, \theta_{3, 1}$ for Fourier basis)}
        \State $f \gets \text{Function } y, \diffusionOrdinary, t \mapsto \left [ \text{Function } r \mapsto \cos \left ( y r \right ) \exp \left ( - \diffusionOrdinary r^{2} t \right ) \right ]$

        \State $p \gets \text{Function } x \mapsto$ \Call{\Cref{alg:Dirac_basic_solver}}{$f$, $|x - \drift \Delta t|$, $\Delta t$, $\diffusionOrdinary$, $\diffusionFractional$, $\alpha$}
        \State $\texttt{R}_{\theta} \gets \int_{|x - \mu| > \texttt{CT}} p(x) \, d x$
            \Comment{Numerical integration}
        \State \Return $\texttt{TCF} = \max(0, ( \texttt{R}_{\theta} - \texttt{R}_{\text{sample}} ) / ( 1 - \texttt{R}_{\text{sample}} ))$
            \Comment{In case $\texttt{R}_{\theta} < \texttt{R}_{\text{sample}}$}
    \end{algorithmic}
\end{algorithm}

\subsection{Moving Average}
The necessity of using a moving average in \Cref{alg:Adam_with_tail_correction} arises from the instability of the parameters associated with the coefficient $\diffusionFractional$.
These parameters are significantly affected by the heavy-tailed nature of the data.
When the tail part sample pool is visited, the parameters related to $\diffusionFractional$ exhibit considerable fluctuations.
Consequently, it is not appropriate to use the raw parameters to assess convergence.
Instead, employing a moving average is a more reasonable approach.
In practice, we compute the moving average over 20,000 steps to enhance the stability and reliability of our method.
See \Cref{fig:moving_average_example} for an example taken from an estimation example in \Cref{eg:censored_MCMC_constant} with a total of 84,000 steps.
The oscillations for $\diffusionFractional$ are particularly large in \Cref{fig:moving_average_example}, demonstrating the necessity of using the moving average to improve stability and accuracy in learning, as the raw learning history exhibits significant volatility.
\begin{figure}[htp]
    \centering
    \includegraphics[width=\linewidth]{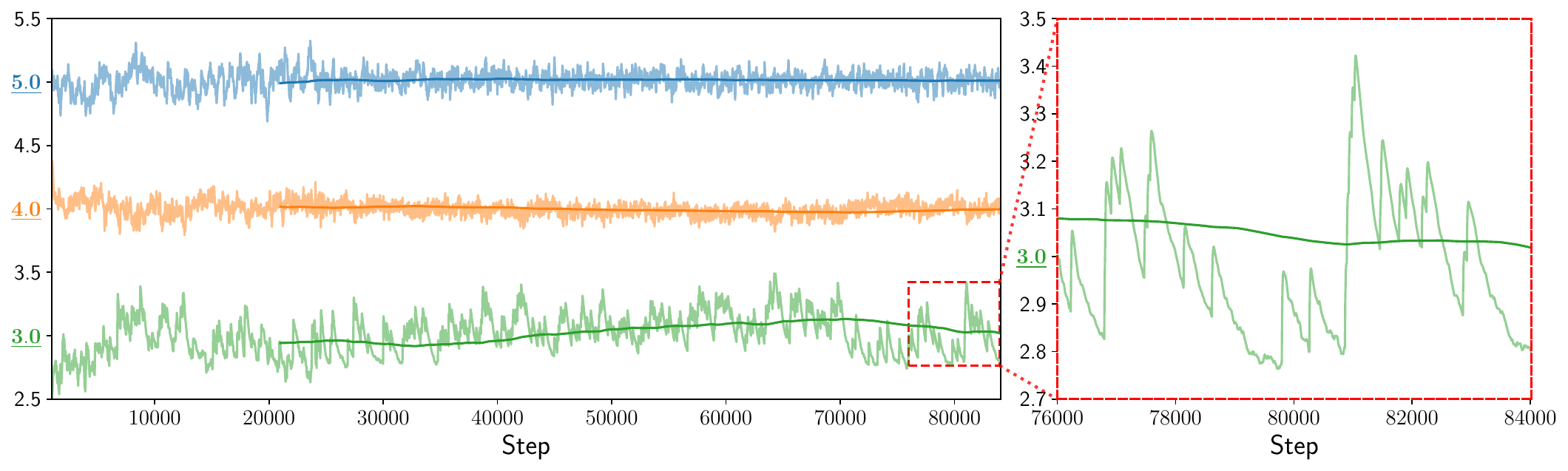}
    \caption{
        The left-hand graph illustrates the learning trajectories of three coefficients.
        The blue curve represents $\drift$ with a ground truth value of $5$, the orange curve represents $\diffusionOrdinary$ with a ground truth value of $4$, and the green curve represents $\diffusionFractional$ with a ground truth value of $3$.
        The lighter oscillatory lines show the actual learning histories, while the bold lines represent the moving averages of 20,000 steps.
        The right-hand graph is an enlarged view of the section highlighted by the red rectangle on the left, focusing on the coefficient $\diffusionFractional$.
    }
    \label{fig:moving_average_example}
\end{figure}

If \texttt{TCF} is too small, achieving statistical significance necessitates a greater number of iterations and an increased moving average window size, approximately on the order of $\Omega(n_{\textnormal{min}} / \texttt{TCF})$, where $n_{\textnormal{min}}$ denotes the minimum sample size required for statistical significance.

\subsection{Nonuniform Temporal Sampling}
Moreover, the trajectories in the observation data do not need to be of the same length.
The uniform length notation used in \Cref{eq:main_pool} and \Cref{alg:Adam_with_tail_correction} is merely for clarity.
\Cref{alg:Adam_with_tail_correction} can be further extended to accommodate uneven temporal sampling by replacing $\texttt{CT}$ with $\texttt{CT}(\Delta t)$, which depends on the time difference $\Delta t = t_{\textnormal{next}} - t_{\textnormal{current}}$.
First, compute the drift mean as follows:
\begin{equation*}
    \mu \gets \textnormal{median} \left ( \Delta x / \Delta t \Big | \left ( x_{\textnormal{current}}, x_{\textnormal{next}}, t_{\textnormal{current}}, t_{\textnormal{next}} \right ) \in P_{\textnormal{main}} \right ).
\end{equation*}
Then, the tail part sample pool can be obtained by:
\begin{equation*}
    P_{\textnormal{tail}} \gets \left \{ \left ( x_{\textnormal{current}}, x_{\textnormal{next}}, t_{\textnormal{current}}, t_{\textnormal{next}} \right ) \in P_{\textnormal{main}} \bigg | \left | \Delta x - \mu \Delta t \right | > \texttt{CT} \left ( \Delta t \right ) \right \}.
\end{equation*}

\end{document}